\newcommand{\heis}{\mathbb{H}}
\newcommand{\reals}[1]{\mathbb{R}^{#1}}
\newcommand{\hred}{{\mathbb{H}/\Gamma}}
\newcommand{\sR}{sub-Riemannian }
\newcommand{\xred}{{I^\textnormal{red}}}
\newcommand{\hrede}{{\mathbb{H}/\Gamma^\epsilon}}
\newcommand{\hredee}{{\mathbb{H}/\Gamma^\epsilon_\lambda}}
\newcommand{\hredeee}{{\mathbb{H}/\Gamma^{\epsilon\lambda}}}
\newcommand{\fheis}{{\mathcal{F}_\mathbb{H}}}
\newtheorem*{theorem*}{Theorem}
\newtheorem{theorem}{Theorem}
\newtheorem{lemma}[theorem]{Lemma}
\theoremstyle{definition}\newtheorem{remark}[theorem]{Remark}
\theoremstyle{definition}\newtheorem{definition}[theorem]{Definition}
\newtheorem{corollary}[theorem]{Corollary}
\newtheorem{proposition}[theorem]{Proposition}
\title{Injectivity of the Heisenberg X-ray Transform}
\author{Steven Flynn\footnote{Department of Mathematics, University of California, Santa Cruz, CA 95064; email: spflynn@ucsc.edu}}
\date{October 19th 2020}
\begin{document}
\maketitle
\begin{abstract}
	We initiate the study of X-ray tomography on sub-Riemannian manifolds, for which the Heisenberg group exhibits the simplest nontrivial example. With the language of the group Fourier transform, we prove an operator-valued incarnation of the Fourier Slice Theorem, and apply this new tool to show that a sufficiently regular function on the Heisenberg group is determined by its line integrals over sub-Riemannian geodesics. We also consider the family of taming metrics $g_\epsilon$ approximating the sub-Riemannian metric, and show that the associated X-ray transform is injective for all $\epsilon>0$. This result gives a concrete example of an injective X-ray transform in a geometry with an abundance of conjugate points. 
\end{abstract}

\section{Introduction}
Our object of study is the geodesic X-ray transform associated to the sub-Riemannian geometry of the Heisenberg group, which is $\mathbb{H}:=\mathbb{C}\times\mathbb{R}$ with the multiplication law
\begin{align*}
(x+iy, t)(u+iv, s)=\left(x+u+i(y+v), t+s+\tfrac{1}{2}(xv-yu)\right),
\end{align*}
and a metric defined in Section \ref{HeisGeo}. $\heis$ is
the local model for  any 3-dimensional \sR manifold of contact type, in the same sense
that 3-dimensional Euclidean space is the  local model for  any 3-dimensional   Riemannian manifold \cite[Thm. 1]{mitchell1985carnot}. This property positions $\heis$ as the natural homogeneous starting point for studying the integral geometry of contact manifolds, just as Radon first inverted the X-ray transform in $\mathbb{R}^2$.

X-ray transforms, which integrate a function on a manifold over its geodesics, have been extensively studied on Riemannian manifolds and homogeneous spaces \cite{gonzalez2010notes, helgason2010integral}. Helgason showed in \cite{helgason1981x} that the X-ray transform on symmetric spaces of noncompact type is injective. In \cite{klein2009funk} the authors prove injectivity on compact symmetric spaces excluding the n-sphere. Ilmavirta in \cite{ilmavirta2016radon} obtains injectivity on compact Lie groups excluding $S^1$ and $S^3$.  (For a survey of results on Riemanian manifolds with boundary see \cite{ilmavirta20194}.)  To the author's knowledge, X-ray transforms on sub-Riemannian manifolds are virtually unexplored. 

To a function $f\in L^1(\heis)$ we associate the function ${I}f$, its X-ray transform, defined by
\begin{align*}
{I}f(\gamma):=\int f\left(\gamma(s)\right)ds, 
\end{align*}
where the geodesics $\gamma$ will be cast as (projections of) integral curves of the Hamiltonian flow on $T^*\heis$ for the degenerate fiber quadratic Hamiltonian later described \eqref{Ham}.
Related integral transforms on $\heis$ have been studied, for example, by Rubin \cite{rubin2012radon}, and Stichartz \cite{strichartz1991lp}, who consider integration over left translates of hyperplanes. We ask whether ${I}f$ determines $f$. 


The sub-Riemannian setting, whose general theory is poorly understood, introduces qualitatively new features to this question. For example, fibers of the unit cotangent bundle $U^*\heis$ (defined in Section  \ref{HeisGeo}) are now cylinders, and there is no unique Levi-Civita connection. Thus $U^*\heis$ has noncompact fibers, 
and there is no canonical splitting of its tangent space into vertical and horizontal components like there is in the Riemannian case as described in \cite{paternain2012geodesic}. {See} \cite{montgomery2002tour} {for background in sub-Riemannian geometry or} \cite{capogna2007introduction}{ for an extensive introduction to the Heisenberg group.} 


A standard geometric obstacle to such inverse problems is presented by the presence of conjugate points. In \cite{monard2015geodesic} and \cite{holman2018attenuated} the authors show that 
 conjugate points generally inhibit stable inversion of the X-ray transform on Riemannian manifolds, with unconditional loss in two dimensions. Unfortunately, the conjugate points in the Heisenberg group  are ubiquitous; the cut locus to any point passes through that point---a feature generic in sub-Riemannian geometry, where the exponential map is never a local diffeomorphism at the origin \cite[p.222]{strichartz1986sub}. Therefore, standard tools for proving injectivity, such as Pestov energy methods, which typically require a positive-definite second fundamental form \cite{ilmavirta20194} do not apply without a closer look. We prove that, nonetheless, the X-ray transform on the Heisenberg group is injective. 


A common recipe for inverting such integral transforms is to compute the normal operator $I^*I$, for $I^*$ defined with a suitable measure in the target space, as in \cite{alberti2019unitarization, helgason2010integral, rouviere2001inverting}, and identify the normal operator as a function of distinguished invariant differential operators. 
On the Heisenberg group, $I^*I$ is not well-behaved (or immediately well-defined on $C_c(\heis)$) due to the singular nature of the sub-Riemannian exponential map, so we focus on studying the transform $I$ directly. We observe a convenient identification of the space of geodesics with a quotient of the Heisenberg group \eqref{geoID}, which allows us to express $I$ as a convolution. We {then} apply the group Fourier transform on the Heisenberg group and its quotient to express $I$ essentially as a multiplication operator (Theorem \ref{theorem2}), from which we deduce that $I$ is injective (Theorem \ref{theorem1}). For background material on the group Fourier transform and the harmonic analysis of the Heisenberg group, see \cite{folland1989harmonic}, \cite{geller1977fourier}, or \cite{thangavelu2012harmonic}.






\section{Main Results}
The Heisenberg geodesics {exist} for all time and are left-translates of helices and straight lines, as described in Section \ref{HeisGeo}. Let $\mathcal{G}$ be the set of all {maximal} Heisenberg geodesics without orientation, and $\mathcal{G}_{{\lambda}}$ the set of all geodesics having a fixed value ${\lambda}\in\mathbb{R}$, for the ``charge'' ${\lambda}$, which is a constant of motion. 
{We will parameterize $\mathcal{G}_\lambda$ using left-translates of specific model geodesics as in} \cite{ilmavirta2016radon}{, with the caveat that Heisenberg geodesics are not one-parameter subgroups of the Heisenberg group. }

Left translation by any element $(z,t) \in \heis$ is an isometry of $\heis$ and so $\heis$ acts on   $\mathcal{G}$.  This action
does not change the value of ${\lambda}$,
and is a  transitive action  on each leaf $\mathcal{G}_{{\lambda}}$, ${\lambda} \ne 0$.  (It is not transitive on $\mathcal{G}_{0}$ since it does
not change the direction of the line in the plane which the ${\lambda} = 0$ geodesic projects to.) These facts are verified by inspecting the exponential map in \eqref{sRexp}. Thus we can  use $\heis$ to parameterize $\mathcal{G}_{{\lambda}}$, {${\lambda} \ne 0$,}
by fixing a   particular helix $\gamma_{{\lambda}} \in \mathcal{G}_{{\lambda}}$  and left-translating it about. We take this helix to be one  whose projection
is a  circle of radius   $|{R}|= 1/|{\lambda}|$ centered at the origin and parameterized by arclength.  Thus our parameterization of that part
of  $\mathcal{G}$ having ${\lambda} \ne 0$ is 
\begin{equation}
s\mapsto (z, t)\gamma_{{\lambda}}(s), \quad \gamma_{{\lambda}}(s)=\left({R} e^{i(s/{R})}, \tfrac{1}{2}s{R}\right) \in \heis ;\; {R} = 1/\lambda.\label{geod}
\end{equation}

Using this identification, we may parameterize geodesics by $(z, t, \lambda)$ as above, uniquely
modulo the isotropy group $\Gamma_\lambda:=\{(0, k\pi R^2)\in\heis: k\in\mathbb{Z}\}$ stabilizing $\gamma_\lambda$, and write the X-ray transform concretely as
\begin{align*}
{I}f(z, t, \lambda):={I}_{\lambda}f(z, t):=\int_\mathbb{R}f\left((z, t)\gamma_{{\lambda}}(s)\right)ds, \quad f\in C_c(\heis).
\end{align*}
  We ignore the degenerate case when $\lambda=0$, where the geodesics are straight lines. Furthermore, since $\lambda<0$ corresponds to a $\lambda>0$ geodesic with opposite orientation, we will take $\lambda>0$ {unless otherwise specified}. {Fixing $\lambda>0$, we prove in} Proposition \ref{factorication of I} that $I_\lambda: L^1(\heis)\to L^1(\mathcal{G}_\lambda)$, with a natural measure on the codomain {given in Section} \ref{geodesics}, is well-defined and bounded. {Existing literature (}\cite{ilmavirta2015radon} {and} \cite{ilmavirta2016radon}{for example) profitably considers the X-ray transform as a family of operators indexed by a directional parameter in this way.} 
  
   In \cite[p. 392]{strichartz1991lp}, Strichartz proves indirectly that a function on the Heisenberg group may not in general be recovered from its integrals over $\lambda=0$ geodesics {alone}, but does not consider $\lambda\neq 0$ geodesics. {Indeed, our main result necessarily involves geodesics with nonzero charge $\lambda$:}
\begin{theorem}
The Heisenberg X-ray transform ${I}:L^1(\heis)\to L^1(\mathcal{G}, d\mathcal{G})$ is injective. In particular, if $f\in L^1(\heis)$, and ${I}_\lambda f=0$ for all ${\lambda}$ in a neighborhood of zero, then $f=0$.\label{theorem1}
\end{theorem}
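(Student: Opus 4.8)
The plan is to prove injectivity of $I$ by exploiting the identification of each leaf $\mathcal{G}_\lambda$ with the homogeneous space $\hred = \mathbb{H}/\Gamma_\lambda$ and reducing the statement about $I_\lambda$ to an algebraic statement after applying the group Fourier transform. The excerpt announces a ``Fourier Slice Theorem'' (Theorem \ref{theorem2}) which expresses $I_\lambda$ as essentially a multiplication (more precisely, an operator-valued multiplier) under the group Fourier transform $\pi_h \mapsto \widehat{f}(\pi_h)$. So the strategy is: first write $I_\lambda f(z,t)$ as a convolution on $\heis$, descended to the quotient $\hred$; second, apply the group Fourier transform on the irreducible representations $\pi_h$ (the Schr\"odinger representations, parameterized by $h \neq 0$) to both $f$ and the convolution kernel; third, use Theorem \ref{theorem2} to identify $\widehat{I_\lambda f}(\pi_h)$ as $\widehat{f}(\pi_h)$ composed with an explicit operator (the Fourier transform of the geodesic measure, which on each representation space acts as a rank-related projection or a diagonal operator in the Hermite basis).

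First I would set up the vanishing hypothesis: assume $I_\lambda f = 0$ for all $\lambda$ in a punctured neighborhood of zero (equivalently all $R = 1/\lambda$ with $|R|$ large). By injectivity of the group Fourier transform, $I_\lambda f = 0$ is equivalent to $\widehat{I_\lambda f}(\pi_h) = 0$ as an operator on $L^2(\mathbb{R})$ for (almost) every $h \neq 0$. Using the operator-valued Fourier Slice Theorem, this says $\widehat{f}(\pi_h)$ composed with the representation-side multiplier $M_\lambda(\pi_h)$ vanishes for every such $h$. The crux is then to understand the multiplier $M_\lambda(\pi_h)$: I expect it to act diagonally in the Hermite basis $\{\phi_k\}$ of $L^2(\mathbb{R})$, with eigenvalues given by the line integral of the matrix coefficients of $\pi_h$ along the model helix $\gamma_\lambda$, i.e. essentially Laguerre functions or Bessel-type integrals in the variables $h$ and $R$. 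The key step is to show that as $\lambda$ ranges over a neighborhood of zero (equivalently as $R \to \infty$), the collection of eigenvalues $\{M_\lambda(\pi_h)_{kk}\}$ cannot all vanish on any fixed matrix entry of $\widehat{f}(\pi_h)$ unless that entry is zero; that is, the multipliers are ``jointly injective'' as $\lambda$ varies.

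The hard part will be this nonvanishing / joint-injectivity argument for the representation-side multipliers. Concretely, from $\widehat{f}(\pi_h) \, M_\lambda(\pi_h) = 0$ for all $\lambda$ near $0$, I need to conclude $\widehat{f}(\pi_h) = 0$, which requires that the images (or the non-kernel directions) of the operators $M_\lambda(\pi_h)$ fill up $L^2(\mathbb{R})$ as $\lambda$ varies. This amounts to showing that the relevant family of special functions (the diagonal eigenvalues of $M_\lambda(\pi_h)$, viewed as functions of $\lambda$ for each fixed Hermite index $k$ and each $h$) has no common zero in $\lambda$ across a neighborhood of zero, so that no nonzero column of $\widehat{f}(\pi_h)$ can be annihilated by all the $M_\lambda$. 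I would handle this by writing the eigenvalues explicitly, reading off their analytic dependence on $\lambda$, and invoking analyticity: if a given eigenvalue is a real-analytic (or entire) function of $\lambda$ that is not identically zero, its vanishing on a neighborhood of zero is impossible, forcing the corresponding component of $\widehat{f}$ to be zero. This gives $\widehat{f}(\pi_h) = 0$ for almost every $h$, and Fourier inversion then yields $f = 0$, proving injectivity.

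One remaining bookkeeping point I would address is the passage from the family of leaf-wise operators $I_\lambda$ back to the global transform $I$ on $\mathcal{G}$: since $I f = 0$ entails $I_\lambda f = 0$ for (a.e., hence by continuity all) $\lambda$, the global injectivity statement is an immediate consequence of the leaf-wise argument, and the ``in particular'' clause is exactly the leaf-wise conclusion applied to a neighborhood of $\lambda = 0$. I would also confirm the $L^1 \to L^1$ boundedness from Proposition \ref{factorication of I} is in force so that all Fourier transforms are legitimately defined, and note that the degenerate case $\lambda = 0$ (straight lines), known to be insufficient by Strichartz, is harmlessly excluded since the argument only uses $\lambda \neq 0$.
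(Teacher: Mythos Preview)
Your high-level strategy is exactly the paper's: identify $\mathcal{G}_\lambda\cong\heis/\Gamma_\lambda$, use the Fourier Slice Theorem (Theorem~\ref{theorem2}) to turn $I_\lambda f=0$ into an operator equation $\mathcal{J}_n\circ\fheis f(n\lambda^2)=0$, and then argue injectivity of the multiplier. Two points, however, do not match the actual structure of the problem and leave a genuine gap.

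First, a structural correction. Because $I_\lambda f$ lives on $\heis/\Gamma_\lambda$, its group Fourier transform is only defined at the discrete lattice $h=n\lambda^2$, $n\in\mathbb{Z}^*$; there is no meaningful ``$\widehat{I_\lambda f}(\pi_h)$ for every $h\neq 0$''. In the Slice Theorem the multiplier is $\mathcal{J}_n$, which depends only on the integer $n$ and \emph{not} on $\lambda$ at all; the $\lambda$-dependence enters solely through the evaluation point $h=n\lambda^2$ of $\fheis f$. Consequently your plan to ``fix $h$ and use analyticity of the eigenvalues in $\lambda$'' cannot be carried out: for fixed $h$ only the countable set $\lambda=\sqrt{h/n}$ yields any information, and along that set the operator $\mathcal{J}_n$ changes by the discrete index $n$. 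There is no continuous parameter in the multiplier to which an analytic--nonvanishing argument applies. (This is in contrast to the taming-metric case in Section~\ref{taming}, where the multiplier is $\mathcal{J}_n(r)$ with $r=(1+2\epsilon^2\lambda^2)^{-1/2}$ genuinely varying with $\lambda$, and an ``almost-every $\lambda$'' argument via countability of Laguerre zeros does work; see Proposition~\ref{propInjective}.)

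Second, the missing ingredient. What is actually needed is that $\mathcal{J}_n$ is injective for enough integers $n$ so that $\{n\lambda^2:\lambda\in(0,\eta)\}$ sweeps out all of $\mathbb{R}^*$. The paper shows (Proposition~\ref{propSVDofJ}) that $\mathcal{J}_n$ is not diagonal but a weighted shift, $\mathcal{J}_n\omega_j=c_{j,n}\,\omega_{j+n}$ with $c_{j,n}$ proportional to $L_j^{(n)}(n)$, so injectivity of $\mathcal{J}_n$ is equivalent to $L_j^{(n)}(n)\neq 0$ for all $j\in\mathbb{N}$. This is the crux, and it is settled by an arithmetic trick (Proposition~\ref{prop1}): setting $a_j^{(n)}=j!\,L_j^{(n)}(n)\in\mathbb{Z}$, the Laguerre recursion gives $a_{j+1}^{(n)}\equiv a_j^{(n)}\pmod 2$ whenever $n$ is odd, so $a_j^{(n)}$ is always odd, hence nonzero. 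Thus $\mathcal{J}_n$ is injective for every odd $n$, and $\bigcup_{n\text{ odd}} n(0,\eta^2)=\mathbb{R}^*$ finishes the proof. Your proposal does not supply any mechanism of this kind, and the analyticity heuristic does not substitute for it.
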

{The measure $d\mathcal{G}$ on the set of geodesics, $\mathcal{G}$, is defined in Section} \ref{geodesics}.

{Thinking of the charge $\lambda$ as the restricted directional parameter, } {Theorem} \ref{theorem1}{  is an example of limited angle tomography (see} \cite{louis1986incomplete} {and} \cite[Ch. 6]{natterer2001mathematics}).

We prove this result using harmonic analysis adapted to the group structure, modifying familiar results in Euclidean space.  Consider, for example, the Radon and Mean Value Transforms on $\mathbb{R}^2$:
\begin{align}
Rf(s, \theta):=\int_\mathbb{R}f\left(se^{i\theta}+ite^{i\theta}\right)dt, && M^rf(z)=\frac{1}{2\pi}\int_0^{2\pi}f\left(z+re^{i\theta}\right)d\theta \label{radon}
\end{align}
where, say, $f\in C_c(\mathbb{R}^2)$. Taking the Fourier transforms in $s$ and $z$,  respectively, yields
\begin{align*}
\mathcal{F}_{s\mapsto \sigma}{Rf}(\sigma, \theta)=\hat{f}(\sigma e^{i\theta}), && \mathcal{F}_{z\mapsto\zeta}{M^rf}(\zeta)=J_0(r|\zeta|)\hat{f}(\zeta),
\end{align*}
where $J_0$ is the zeroth-order Bessel function \eqref{bessel}. These results are known as Fourier Slice Theorems, or Projection Slice Theorems \cite{natterer2001mathematics}. They reveal that $R$, thought of as a projection onto $\{\theta\}$, becomes a restriction operator onto the ``slice" $\sigma\to\sigma e^{i\theta}$ in the Fourier domain, and that $M^r$ becomes a multiplication operator by $J_0(r|\zeta|)$ when viewed in the Fourier domain. Fourier Slice Theorems exist for more general Radon transforms as well; for example, in \cite{gonzalez2006invariant, kakehi1999integral}.

The Radon and Mean Value Transforms {may be} interpreted as integration over straight lines or magnetic geodesics in Euclidean space. In the case of $\heis$---which is a ``flat" sub-Riemannian geometry---we prove a corresponding Fourier Slice Theorem for Heisenberg geodesics. We use the operator-valued group Fourier transform $\mathcal{F}_\heis$ associated to the Bargmann-Fock representation $\beta_h$ (defined in equation \eqref{BFock}), which has proven a useful tool, for example, by Nachman in \cite{nachman1982wave} to find the fundamental solution for the wave operator for the Heisenberg Laplacian. The theory of $\mathcal{F}_\heis$ is extensively developed in \cite{geller1980fourier, thangavelu2012harmonic}. In particular it has a Plancherel Theorem and Inversion Theorem \cite{folland2016course, geller1977fourier, thangavelu2012harmonic}. 

We identify $\mathcal{G}_\lambda\cong \heis/\Gamma_\lambda$ in Section \ref{geodesics} and so also
 define in equation \eqref{reducedFourierTransform} the group Fourier transform $\mathcal{F}_{\heis/\Gamma_\lambda}$ on the quotient. We see that in the generalized Fourier domain of $\fheis$ and $\mathcal{F}_{\heis/\Gamma_\lambda}$, the Heisenberg X-ray transform is {essentially} a multiplication operator:

\begin{theorem}[Heisenberg Fourier Slice Theorem]\label{theorem2}
	If $f\in L^1(\heis)$, then
	\begin{align}
	\left(\mathcal{F}_{\heis/\Gamma_\lambda}\left({I}_\lambda f\right)\right)(n)=\left(2\pi/\lambda\right)\mathcal{J}_n\circ \left(\mathcal{F}_\heis f\right)(n\lambda^2), \quad \forall n \in \mathbb{Z}\setminus\{0\}, \forall\lambda>0. \label{hfst}
	\end{align}
\end{theorem}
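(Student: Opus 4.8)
The plan is to exploit the convolution structure behind the identification $\mathcal{G}_\lambda\cong\heis/\Gamma_\lambda$ and the fact that the group Fourier transform intertwines convolution with operator composition. First I would confirm that $I_\lambda f$ is genuinely a function on the quotient. Since $\gamma_\lambda(s+2\pi R)=\gamma_\lambda(s)\,(0,\pi R^2)$ with $R=1/\lambda$, and $(0,\pi R^2)$ is central, translating the basepoint $(z,t)$ by the generator of $\Gamma_\lambda$ only reparametrizes the defining integral, so $I_\lambda f$ is $\Gamma_\lambda$-periodic and descends to $\heis/\Gamma_\lambda$. Consequently $\mathcal{F}_{\heis/\Gamma_\lambda}(I_\lambda f)(n)$ is the integral of $I_\lambda f$ against the representation $\beta_{n\lambda^2}$ over a fundamental domain $\mathcal{D}$ for $\Gamma_\lambda$; here the quantization $h=n\lambda^2$ is forced by \eqref{reducedFourierTransform}, whose defining feature is that $\beta_{n\lambda^2}$ is trivial on $\Gamma_\lambda$ and therefore itself descends to the quotient.

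The heart of the computation is an unfolding. Inserting the definition of $I_\lambda f$ produces a double integral, over $\mathcal{D}$ in $(z,t)$ and over $\mathbb{R}$ in the arclength $s$. I would split the $s$-line into geodesic periods $[2\pi Rk,\,2\pi R(k+1))$ and use $\gamma_\lambda(s+2\pi Rk)=\gamma_\lambda(s)\,(0,k\pi R^2)$ to trade each period for a central shift of the basepoint by the $k$-th power of the generator of $\Gamma_\lambda$. Because the fundamental domain in $t$ has exactly the length of one such shift, summing over $k\in\mathbb{Z}$ sweeps the basepoint over all of $\heis$; and because $\beta_{n\lambda^2}$ is insensitive to $\Gamma_\lambda$-shifts, the fundamental-domain integral folds up into a single integral over $\heis$ with $s$ now restricted to one period $[0,2\pi R)$. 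This is the step I expect to demand the most care, since it is exactly where the matching of the lattice $\Gamma_\lambda$, the geodesic period $2\pi R$, and the quantization $n\lambda^2$ has to be made precise.

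Finally I would change variables $w=(z,t)\gamma_\lambda(s)$. Unimodularity of $\heis$ gives $d(z,t)=dw$, and the homomorphism property $\beta_{n\lambda^2}(w\gamma_\lambda(s)^{-1})=\beta_{n\lambda^2}(w)\,\beta_{n\lambda^2}(\gamma_\lambda(s))^{-1}$ factors the integral into $\fheis f(n\lambda^2)$ composed with the operator $\int_0^{2\pi R}\beta_{n\lambda^2}(\gamma_\lambda(s))^{-1}\,ds$. The period length $2\pi R=2\pi/\lambda$ factors out as the stated scalar, leaving the normalized average of the unitaries $\beta_{n\lambda^2}(\gamma_\lambda(s))^{-1}$, which is precisely the operator I would take as $\mathcal{J}_n$, giving \eqref{hfst}. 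Two routine matters close the argument: Fubini and the change of variables are legitimate because $f\in L^1(\heis)$ while each $\beta_h$ is unitary, so the $s$-integrated operator is bounded; and the left-versus-right placement of $\mathcal{J}_n$ relative to $\fheis f(n\lambda^2)$ must be reconciled with the paper's conventions for convolution and for $\fheis$.
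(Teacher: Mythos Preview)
Your proposal is correct and follows the same underlying computation as the paper, but packaged more directly. The paper modularizes the argument: it first factors $I_\lambda=I^{\text{red}}_\lambda\circ P_\lambda$ (periodization followed by a one-period transform), proves a Poisson summation lemma $\mathcal{F}_{\heis/\Gamma}(Pf)(n)=\mathcal{F}_\heis f(n)$, computes the convolution multiplier of $I^{\text{red}}$, and then uses the dilation automorphisms $\delta_\lambda$ to pass from $\lambda=1$ to general $\lambda$. Your single ``unfold over periods, then change variables'' step is exactly the composite of these pieces done at once, for general $\lambda$. The paper's modular route buys reusability---the same lemmas are recycled verbatim for the taming metric $g_\epsilon$ in Section~\ref{taming}---while your direct route is shorter and avoids the dilation bookkeeping.

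Two small points to tighten. First, in your last paragraph you dropped the adjoint: after $(z,t)=w\gamma_\lambda(s)^{-1}$ the integrand carries $\beta_{n\lambda^2}(z,t)^*$, and $[\beta(w)\beta(\gamma)^{-1}]^*=\beta(\gamma)\,\beta(w)^*$, so the operator that factors out on the \emph{left} is $\int_0^{2\pi R}\beta_{n\lambda^2}(\gamma_\lambda(s))\,ds$, not its inverse; this also fixes the left/right placement you flagged. Second, you cannot simply ``take'' the resulting average to be $\mathcal{J}_n$, since $\mathcal{J}_n$ is already defined in \eqref{bessel op}--\eqref{besselop2} via $\beta_n$ and $\gamma_1$. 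You need the one-line identification $\beta_{n\lambda^2}(\gamma_\lambda(s))=\beta_n(\gamma_1(\lambda s))$ (immediate from $\sqrt{n\lambda^2}\,R=\sqrt{n}$ and $2n\lambda^2\cdot\tfrac{1}{2}Rs=n\lambda s$), after which the substitution $u=\lambda s$ produces exactly $(2\pi/\lambda)\mathcal{J}_n$.
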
 

{Equation} \eqref{hfst} is an equality of operators acting on Bargmann-Fock space (originally described in \cite{bargmann1961hilbert}),
\begin{align} \mathcal{H}:=\bigg\{F:\mathbb{C}\to\mathbb{C}, \text{ holomorphic}: \frac{1}{\pi}\int_\mathbb{C}|F(\zeta)|^2 e^{-|\zeta|^2}d\zeta<\infty \bigg\}.\label{Fock}
\end{align} 
$\mathcal{J}_n:\mathcal{H}\to\mathcal{H}$ is the operator
\begin{align}
\mathcal{J}_nF(\zeta)=\frac{1}{2\pi i}\left(\frac{1}{en}\right)^{n/2}\oint z^{n-1}e^{-n\zeta/z}F(\zeta+z)dz, \quad n>0 \label{bessel op}
\end{align}
where the contour is a circle around the origin oriented counterclockwise (and where $\mathcal{J}_{-n}=\mathcal{J}_n$). Loosely speaking, the Heisenberg X-ray transform $I$ is ``block-diagonalized" in $\lambda$ by the group Fourier transform, and each block is essentially a multiple of $\mathcal{J}_n$. 

The classical Fourier Slice Theorem for $R$ in \eqref{radon} states that knowledge of $Rf$ for a fixed $\theta_0$ determines the Fourier transform $\hat{f}(\zeta)$ for all $\zeta\parallel\theta_0$. Similarly, the Heisenberg Fourier Slice Theorem says that knowledge of $I_\lambda f$ for fixed $\lambda$ determines the group Fourier transform $\mathcal{F}_\heis f (h)$, up to multiplication by the operator $\mathcal{J}_n$, for all  $h \in \lambda^2\mathbb{Z}^*$. Therefore, injectivity of $I$ follows once we show that {$\mathcal{J}_n$ is an injective operator at least whenever $n$ is an odd integer} (Proposition \ref{prop1}).

Finally, in Section \ref{taming}, we consider the ray transform $I^\epsilon$ (defined in \eqref{epsilonXray}) associated to a special family of left-invariant taming metrics $g_\epsilon$ parameterized by $\epsilon>0$:
\begin{align*}
g_{\epsilon} := dx^2+dy^2+\left(1/\epsilon\right)^{2}\Theta^2; \quad \Theta :=dt-\tfrac{1}{2}(xdy-ydx).
\end{align*}
 {First,} we prove a Heisenberg Fourier Slice Theorem for $g_\epsilon$ geodesics:
\begin{theorem}[$g_\epsilon$ Heisenberg Fourier Slice Theorem]\label{slice} If $f\in L^1(\heis)$, and $\epsilon>0$ then
	\begin{align*}
	\mathcal{F}_\hredee\left(I_\lambda^\epsilon f\right)(n)=(2\pi/\lambda)\mathcal{J}_n\left(\frac{1}{\sqrt{1+2\epsilon^2\lambda^2}}\right)\circ\left(\fheis f\right)\left(\frac{n\lambda^2}{1+2\epsilon^2\lambda^2}\right), \quad \forall n\in\mathbb{Z}^*, \;\forall \lambda>0.
	\end{align*}
\end{theorem}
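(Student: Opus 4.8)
The plan is to mirror the proof of Theorem \ref{theorem2}, reducing the $g_\epsilon$ statement to the sub-Riemannian one by pinning down exactly how the taming metric deforms the model geodesic. First I would compute the $g_\epsilon$-geodesics from the Hamiltonian flow of the cometric $H_\epsilon=\tfrac12(p_X^2+p_Y^2+\epsilon^2 p_T^2)$ dual to $g_\epsilon$ in the left-invariant frame $X=\partial_x-\tfrac y2\partial_t,\ Y=\partial_y+\tfrac x2\partial_t,\ T=\partial_t$ (so $[X,Y]=T$). Since $t$ is cyclic, the charge $p_T$ is conserved and $(p_X,p_Y)$ rotates at rate $p_T$, so the projection to $\mathbb{C}$ is again a circle. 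The one new feature relative to the horizontal sub-Riemannian case is a nonzero vertical body-velocity $\epsilon^2 p_T$, which contributes a uniform drift in $t$. Normalizing $\gamma_\lambda^\epsilon$ so that its projection is the same radius-$1/\lambda$ circle as $\gamma_\lambda$, I expect the clean relation
\[
\gamma_\lambda^\epsilon(s)=\gamma_\lambda(s)\,(0,\epsilon^2\lambda s),
\]
i.e. the $g_\epsilon$-geodesic is the sub-Riemannian geodesic sheared by a central drift linear in the arclength parameter. This single identity is the engine of the whole argument, and it is where the factor $2$ originates: the drift $\epsilon^2\lambda$ added to the sub-Riemannian pitch $\tfrac1{2\lambda}$ gives total pitch $\tfrac{1+2\epsilon^2\lambda^2}{2\lambda}$.

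Next I would carry over the structural apparatus of Section \ref{geodesics}. The increased pitch changes the first return to the $t$-axis, so the isotropy lattice becomes $\Gamma_\lambda^\epsilon=\{(0,k\pi(1+2\epsilon^2\lambda^2)/\lambda^2):k\in\mathbb{Z}\}$, giving $\mathcal{G}_\lambda^\epsilon\cong\heis/\Gamma_\lambda^\epsilon$ and the attendant quotient Fourier transform $\mathcal{F}_{\hredee}$. I would then write $I_\lambda^\epsilon f$ as convolution on $\heis$ against the measure $\mu_\lambda^\epsilon$ carried by $\gamma_\lambda^\epsilon$, so that applying $\fheis$ converts it into operator composition and restricts to the representations $\beta_h$ that survive on the quotient, i.e. to the quantized values $h=h_n=n\lambda^2/(1+2\epsilon^2\lambda^2)$ read off from $\Gamma_\lambda^\epsilon$.

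The computational heart is the group Fourier transform of $\mu_\lambda^\epsilon$. Because the drift $(0,\epsilon^2\lambda s)$ is central, $\beta_h(0,\epsilon^2\lambda s)$ is the scalar $e^{ih\epsilon^2\lambda s}$, whence
\[
\left(\fheis\mu_\lambda^\epsilon\right)(h)=\int_\mathbb{R}e^{ih\epsilon^2\lambda s}\,\beta_h(\gamma_\lambda(s))\,ds,
\]
which is exactly the integral underlying Theorem \ref{theorem2} modified by the extra phase $e^{ih\epsilon^2\lambda s}$. Feeding in the rotation-covariance of $\beta_h$ under $z\mapsto e^{i\phi}z$ (conjugation by $e^{-i\phi N}$, with $N$ the number operator) and reparametrizing by the projection angle $\phi$, the extra phase simply augments the phase accumulated per radian. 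This rescales the relationship between the quotient index and the central frequency: the quantization forces $h_n=n\lambda^2/(1+2\epsilon^2\lambda^2)$, while the effective action $h_n\rho^2=n/(1+2\epsilon^2\lambda^2)$ entering the displacement-operator matrix elements is rescaled by $a^2$ with $a=1/\sqrt{1+2\epsilon^2\lambda^2}$. Identifying this rescaled contour integral with the deformed operator $\mathcal{J}_n(a)$ (so that $\mathcal{J}_n(1)=\mathcal{J}_n$ recovers \eqref{bessel op}) and retaining the prefactor $2\pi/\lambda$ from the $\phi$-integration yields the claimed identity.

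The main obstacle I anticipate lives entirely in this last step and is twofold. First, the two deformations must be shown to decouple cleanly: the quotient index $n$ (which selects a diagonal of the operator, coming from $\Gamma_\lambda^\epsilon$) must be separated from the central frequency $h_n$ (which sets the action $h_n\rho^2$ inside the matrix elements), and it is precisely this separation that places $1+2\epsilon^2\lambda^2$ in the evaluation point but only $1/\sqrt{1+2\epsilon^2\lambda^2}$ in the operator argument. Second, one must fix the arclength and measure conventions so that the normalization of $\gamma_\lambda^\epsilon$ reproduces the prefactor $2\pi/\lambda$ rather than an $\epsilon$-dependent multiple; getting the drift coefficient and the lattice $\Gamma_\lambda^\epsilon$ exactly right is the delicate bookkeeping that determines the constant. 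Throughout, the $\epsilon\to0$ limit, in which the formula collapses to Theorem \ref{theorem2}, serves as a running consistency check.
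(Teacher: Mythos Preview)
Your proposal is correct and reaches the same identity, but the organization differs from the paper's in one notable way. The paper never invokes the drift relation $\gamma_\lambda^\epsilon(s)=\gamma_\lambda(s)\,(0,\epsilon^2\lambda s)$ to reduce the $g_\epsilon$ multiplier to the sub-Riemannian one; instead it rebuilds the entire apparatus (periodization $P_\lambda^\epsilon$, reduced transform $I_\lambda^{\epsilon,\mathrm{red}}$, Poisson summation, multiplier computation) from scratch for the $\epsilon$-deformed geodesic, computes the multiplier only at $\lambda=1$ (Proposition~\ref{multProp}), and then uses the dilation homogeneity $I_\lambda^\epsilon f=\lambda^{-1}\delta_\lambda^*I_1^{\epsilon\lambda}(\delta_{1/\lambda}^*f)$ together with Lemma~\ref{dilationLemma2} to pass to general $\lambda$. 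Your route bypasses the dilation reduction entirely by computing the multiplier at general $\lambda$ in one stroke, exploiting that the central drift contributes only a scalar phase in the representation; the identification $\sqrt{h_n}\,R=\sqrt{n}\,r$ with $r=1/\sqrt{1+2\epsilon^2\lambda^2}$ then matches the definition of $\mathcal{J}_n(r)$ directly. Both approaches are rigorous; yours is slightly more economical and makes transparent \emph{why} the deformation enters as a radius rescaling, while the paper's modular structure makes the parallel with the proof of Theorem~\ref{theorem2} line-by-line explicit. Two minor corrections: the central character is $\beta_h(0,t)=e^{2iht}$, so the drift phase should read $e^{2ih\epsilon^2\lambda s}$; and your integral $\int_{\mathbb{R}}\beta_h(\gamma_\lambda(s))\,ds$ diverges as written and must be interpreted after periodization, which you do acknowledge in passing.
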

{Here $\mathcal{J}_n(r)$, $r>0$, is defined in} \eqref{besselop3}.

We then use Theorem \ref{slice} in the same way with Proposition \ref{propInjective} to show that $I^\epsilon$ is injective:
\begin{theorem}\label{theorem1epsilon}
	For all $\epsilon>0$, the Heisenberg taming X-ray transform $I^\epsilon: L^1(\heis)\to L^1(\mathcal{G}^\epsilon, d\mathcal{G}^\epsilon)$ is injective. In particular, if $f\in L^1(\heis)$ and $I_\lambda^\epsilon f=0$ for all $\lambda$ in a neighborhood of zero, then $f=0$. 
\end{theorem}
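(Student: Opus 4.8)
The plan is to mirror the proof of Theorem \ref{theorem1}, using the taming slice theorem (Theorem \ref{slice}) in place of Theorem \ref{theorem2} and Proposition \ref{propInjective} in place of Proposition \ref{prop1}. First I would take $f\in L^1(\heis)$ with $I_\lambda^\epsilon f=0$ for every $\lambda$ in a one-sided neighborhood $(0,\delta)$ of zero (recall the convention $\lambda>0$). Applying the reduced group Fourier transform $\mathcal{F}_\hredee$ to this vanishing datum and invoking Theorem \ref{slice}, for each such $\lambda$ and each $n\in\mathbb{Z}^*$ I obtain the operator identity
\[
0=(2\pi/\lambda)\,\mathcal{J}_n\!\left(\frac{1}{\sqrt{1+2\epsilon^2\lambda^2}}\right)\circ(\fheis f)\!\left(\frac{n\lambda^2}{1+2\epsilon^2\lambda^2}\right)
\]
on Bargmann--Fock space $\mathcal{H}$. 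Since the scalar $2\pi/\lambda$ is nonzero, this says $\mathcal{J}_n(r_\lambda)\circ(\fheis f)(\phi_n(\lambda))=0$, where I abbreviate $r_\lambda=(1+2\epsilon^2\lambda^2)^{-1/2}\in(0,1)$ and $\phi_n(\lambda)=n\lambda^2/(1+2\epsilon^2\lambda^2)$.

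Next I would restrict to odd $n$, for which Proposition \ref{propInjective} asserts that the Bessel-type operator $\mathcal{J}_n(r)$ is injective on $\mathcal{H}$; this forces $(\fheis f)(\phi_n(\lambda))=0$ for all odd $n$ and all $\lambda\in(0,\delta)$. The heart of the argument is then a covering claim: the frequency set $\{\phi_n(\lambda):\, n\text{ odd},\ \lambda\in(0,\delta)\}$ should exhaust $\mathbb{R}\setminus\{0\}$. For fixed $n>0$ the map $\lambda\mapsto\phi_n(\lambda)$ is continuous and strictly increasing (as a function of $\lambda^2$ its derivative $n(1+2\epsilon^2\lambda^2)^{-2}$ is positive), so as $\lambda$ runs through $(0,\delta)$ it sweeps the interval $(0,\phi_n(\delta))$. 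Because $\phi_n(\delta)=n\delta^2/(1+2\epsilon^2\delta^2)\to\infty$ along odd $n>0$, the nested union of these intervals is all of $(0,\infty)$; using $\mathcal{J}_{-n}(r)=\mathcal{J}_n(r)$ and odd $n<0$ covers $(-\infty,0)$ in the same way. Hence $(\fheis f)(h)=0$ for every $h\in\mathbb{R}\setminus\{0\}$.

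To finish I would invoke injectivity of the group Fourier transform on $L^1(\heis)$---the Inversion Theorem cited in \cite{folland2016course, geller1977fourier, thangavelu2012harmonic}---noting that the degenerate representations at $h=0$ carry no Plancherel mass and so cannot obstruct recovery. This gives $f=0$, which is exactly the conclusion of the theorem; the limited-angle refinement is already built in, since only the window $\lambda\in(0,\delta)$ was used.

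The hard part will not be the formal manipulation but the covering step paired with the odd-$n$ restriction: one must check that odd charges drawn from an arbitrarily small neighborhood of $\lambda=0$ already saturate every nonzero frequency, and that Proposition \ref{propInjective} supplies injectivity of $\mathcal{J}_n(r)$ across the whole open range $r_\lambda\in(0,1)$ produced by the taming parameter, rather than only at $r=1$ as in the sub-Riemannian Proposition \ref{prop1}. This widening of the admissible Bessel parameter is the sole genuine departure from the proof of Theorem \ref{theorem1}.
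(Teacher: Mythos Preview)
Your overall strategy is right, but you have misread Proposition~\ref{propInjective}. It does \emph{not} say that $\mathcal{J}_n(r)$ is injective for odd $n$; it says that for each fixed $n\in\mathbb{Z}^*$ the operator $\mathcal{J}_n\!\big((1+2\epsilon^2\lambda^2)^{-1/2}\big)$ is injective for \emph{almost all} $\lambda>0$. The odd-$n$ parity trick of Proposition~\ref{prop1} works only at the single value $r=1$, because there $nr^2=n$ is an odd integer and the recursion~\eqref{laguerre} reduces mod~$2$. For $r<1$ the argument $nr^2$ is no longer an integer, the mod-$2$ miracle disappears, and in fact for each fixed $n$ the set $\{r\in(0,1): L_j^{(n)}(nr^2)=0\text{ for some }j\}$ is nonempty (the zeros of $L_j^{(n)}$ accumulate at $0$ as $j\to\infty$). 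So your claim that ``Proposition~\ref{propInjective} supplies injectivity of $\mathcal{J}_n(r)$ across the whole open range $r_\lambda\in(0,1)$'' is false, and with it your covering argument, which relies on $\phi_n$ sweeping out full intervals.

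The paper's proof handles exactly this: from Theorem~\ref{slice} and the correct statement of Proposition~\ref{propInjective} one gets $(\fheis f)\big(n\lambda^2/(1+2\epsilon^2\lambda^2)\big)=0$ for \emph{almost all} $\lambda\in(0,\eta)$ and every $n\in\mathbb{Z}^*$. The image of this full-measure set under $\lambda\mapsto\lambda^2/(1+2\epsilon^2\lambda^2)$ has full measure in an interval $(0,c)$, and the union of its integer dilates has full measure in $\mathbb{R}^*$. Hence $\fheis f$ vanishes almost everywhere, and Fourier inversion finishes. Your restriction to odd $n$ is harmless but unnecessary once you use the ``almost all $\lambda$'' statement; the essential correction is to drop the claim of injectivity for every $\lambda$ and replace the interval-sweeping argument by a measure-theoretic one.
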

{The measure $d\mathcal{G}^\epsilon$ on the set of $g_\epsilon$-geodesics, $\mathcal{G}^\epsilon$, is defined in} \eqref{measureEpsilon2}.

The first part of Theorem \ref{theorem1epsilon} is not new. In \cite{peyerimhoff2018support} the authors prove a 
 support theorem for geodesics of left-invariant metrics on the Heisenberg group, which implies injectivity of the associated X-ray transform. However, to the author's knowledge, the second part of Theorem \ref{theorem1epsilon} is new. 

\section{Preliminaries}
\subsection{Heisenberg Geometry}\label{HeisGeo}
We define the sub-Riemannian metric on $\heis$ by declaring the left-invariant vector fields 
\begin{align}
X=\partial_x-\frac{1}{2}y\partial_t, && Y=\partial_y+\frac{1}{2}x\partial_t, \label{XY}
\end{align}
to be orthonormal, and  the length of  $T=\partial_t$ to be  infinite. Then any finite length smooth path in $\heis$ must be 
tangent to the  nonintegrable distribution $\mathcal{D}_q:=\text{Span}\{X_q, Y_q\}$, $q\in \heis$.    We call such a path {\it horizontal}.   
The length of a horizontal path equals the length of its projection to the plane by the map
$$\pi(x,y,t) = (x,y).$$
A  minimizing Heisenberg geodesic is a shortest
horizontal path joining two points of $\heis$. That any two points in $\heis$ are connected by a horizontal path is guaranteed by Chow's Theorem and the fact that $\mathcal{D}$ satisfies the H\"{o}rmander condition (i.e. $\mathcal{D}$ is bracket-generating). 

The  fiber quadratic Hamiltonian $H: T^*\heis \to \mathbb{R}$ given in canonical coordinates by 
\begin{align}
H(x,y,t,p_x, p_y, p_t) =\tfrac{1}{2}\left((p_x-\tfrac{1}{2}y{p_t})^2+(p_y+\tfrac{1}{2}x{p_t})^2\right)
\label{Ham}
\end{align}
generates  the Heisenberg   geodesics.  By `generate' we mean that  any   solution to   Hamilton's equations for $H$ 
projects, via the canonical projection  $T^* \heis \to \heis$, to a   \sR geodesic,
and conversely, all Heisenberg  geodesics arise   this way \cite[Sec 1.5]{montgomery2002tour}.   If we want   geodesics   parameterized by arclength we only
take solutions for which $H=1/2$. (Thus, we define the unit cotangent bundle $U^*\heis$ as the set of all $(q, p)\in T^*\heis$ for which $H(q, p)=1/2$.)   
These  geodesics can be best  understood by their projection under $\pi$ to the plane:   they are circles or lines.
Indeed 
\begin{align*}
\dot{p}_t=-\frac{\partial H}{\partial t}=0,
\end{align*}
so that $\lambda:=p_t$ is a constant of motion. 
If we interpret ${\lambda}$  as the charge of a particle, then $H$, viewed as
a Hamiltonian on $T^*\reals{2}$, is the Hamiltonian for a particle of charge ${\lambda} $ travelling in the plane
under the influence of a  constant unit strength magnetic field.  These solutions are well-known and easy to derive \cite[p. 12]{montgomery2002tour} .
When $H =1/2$ they are circles of radius ${R}=1/|{\lambda}|$ for ${\lambda} \ne 0$, and lines when ${\lambda} =0$.   See eq \eqref{geod} 
for a concrete representation of all geodesics with ${\lambda}  \ne 0$. 

\subsection{The group Fourier transforms}
We start by giving a brief description of the representation theory of the Heisenberg group. A more detailed discussion can be found in \cite{folland1989harmonic}. Denote by $\mathcal{U}\left(\mathcal{H}\right)$ the set of unitary operators on Bargmann-Fock space, defined in \eqref{Fock}. For each ${h} \in \mathbb{R}^*=\mathbb{R}\setminus \{0\}$, the map (motivated in Section \ref{infinitesimal})
\begin{align*}
\beta_{h} : \heis \to \mathcal{U}\left(\mathcal{H}\right)
\end{align*}
given by
\begin{align}
\beta_{h}(z, t)F(\zeta):=
e^{2i{h} t - \sqrt{h}\zeta\overline{z}-\frac{{h}}{2}|z|^2}F(\zeta+\sqrt{h}z), \quad F\in \mathcal{H}, \;{h}>0, \label{BFock}
\end{align}
and $\beta_{h}(z, t)=\beta_{|{h}|}(\overline{z}, -t)$ for ${h} <0$,
is a strongly continuous unitary representation of the Heisenberg group on $\mathcal{H}$. Moreover, it is known that these representations are irreducible, and by the Stone-von Neumann Theorem, up to unitary equivalence, these are all of the irreducible unitary representations on $\heis$ that are nontrivial on the center of $\heis$ \cite{folland1989harmonic}. 

We define the group Fourier transform of an integrable function on $\heis$. Denote by $\mathcal{B}(\mathcal{H})$ the space of bounded operators on $\mathcal{H}$. The Heisenberg Fourier transform of $f\in L^1(\heis)$ is the operator-valued function
\begin{align*}
&\mathcal{F}_\heis f:\mathbb{R}^*\to\mathcal{B}(\mathcal{H}) \\
&\mathcal{F}_\heis f({h}):=\int_\heis f(q)\beta_{h}(q)^*dq
\end{align*}
where the integral is taken in the Bochner sense \cite[p. 11]{thangavelu2012harmonic}. Think of $h$ as a semi-classical parameter.
\begin{remark}
	Many authors define $\mathcal{F}_\heis$ alternatively with the Schr\"{o}dinger representations. Our definition seems more natural for studying the X-ray transform due to the simplicity of \eqref{bessel op}, and is equivalent by conjugation with a unitary intertwining map; the choice is largely a personal preference. We also normalize the representations $\beta_h$ in such a way that they all act on the same space $\mathcal{H}$, rather than a family of spaces parameterized by $h\in\mathbb{R}^*$, as in  \cite{folland1989harmonic}. 
\end{remark}
If $f\in L^1(\heis)\cap L^2(\heis)$, then $\mathcal{F}_\heis(f)({h})$ is a Hilbert-Schmidt operator on $\mathcal{H}$ \cite{geller1977fourier, geller1980fourier}. Let $S_2$ denote the space of Hilbert-Schmidt operators on $\mathcal{H}$, and define the Hilbert Space $L^2(\mathbb{R}^*, S_2; d\mu)=L^2(S_2)$ via the inner product
\begin{align*}
\langle A, B\rangle_{L^2(S_2)}:=\int_{\mathbb{R}^*}\text{tr}\left(A({h})B({h})^*\right)d\mu({h}), \quad d\mu=\pi^{-2}|{h}|d{h}.
\end{align*}

{We will need the following theorems from Geller, normalized to account for the slightly different group law for $\heis$ used here and in} \cite{geller1977fourier}. 
\begin{theorem}[{\cite[Plancherel Theorem]{geller1977fourier}}]
	If $f\in L^1(\heis)\cap L^2(\heis)$, then $||f||_{L^2(\heis)}=||\mathcal{F}_\heis f||_{L^2(S_2)}$.
\end{theorem}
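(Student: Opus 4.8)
The plan is to establish this as a concrete instance of the abstract Plancherel theorem, exploiting the explicit Bargmann-Fock model. The guiding idea is that integrating out the central variable $t$ diagonalizes the action of the center and reduces $\mathcal{F}_\heis f(h)$ to a Weyl-type transform on the plane, whose Hilbert-Schmidt norm is controlled by Moyal's identity (equivalently, the Schur orthogonality relations for the square-integrable representation $\beta_h$). Combining this with the ordinary Euclidean Plancherel theorem in $t$ should recover $\|f\|_{L^2(\heis)}$ exactly, with the factor $\pi^{-2}|h|$ emerging from the $\sqrt{h}$-scaling built into $\beta_h$.

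Concretely, I would first restrict to a convenient dense subspace---say $f\in C_c(\heis)$ or the Schwartz class---so that all integrals below converge absolutely and Fubini applies, deferring the general $L^1\cap L^2$ case to a density argument at the end. Writing $q=(z,t)$ and using unitarity to replace $\beta_h(q)^*$ by $\beta_h(q^{-1})$, the operator $\mathcal{F}_\heis f(h)$ is an integral operator on $\mathcal{H}$. Performing the $t$-integration first produces the partial Fourier transform in the central variable,
\begin{align*}
f^h(z):=\int_\mathbb{R} f(z,t)\,e^{-2iht}\,dt,
\end{align*}
and what remains is precisely the transform $g\mapsto\int_\mathbb{C} g(z)\,\beta_h(z,0)^*\,dz$ applied to $g=f^h$. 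I would then compute its Hilbert-Schmidt norm against the orthonormal monomial basis $e_n(\zeta)=\zeta^n/\sqrt{n!}$ of $\mathcal{H}$; the matrix coefficients $\langle\beta_h(z,0)e_m,e_n\rangle$ are explicit Gaussian-type expressions in $z$ (Laguerre functions after a polar integration), so the sum of their squared moduli is evaluated by a Gaussian integral.

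The heart of the argument---and the step I expect to be most delicate---is this orthogonality computation: showing that $\|\mathcal{F}_\heis f(h)\|_{S_2}^2=c\,|h|^{-1}\int_\mathbb{C}|f^h(z)|^2\,dz$ with the correct constant $c$. This is Moyal's identity, and in the Bargmann-Fock picture it follows from the orthogonality relations for the matrix coefficients of the irreducible $\beta_h$, but bookkeeping the normalizations---the $\sqrt{h}z$ shift, the weight $e^{-\frac{h}{2}|z|^2}$, and the $\tfrac{1}{\pi}\int_\mathbb{C}e^{-|\zeta|^2}$ inner product---is what pins down the Plancherel density $d\mu=\pi^{-2}|h|\,dh$. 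Once this is in hand, I would integrate over $h$ and invoke the Euclidean Plancherel theorem in $t$, namely $\int_\mathbb{R}|f(z,t)|^2\,dt=\tfrac{1}{\pi}\int_\mathbb{R}|f^h(z)|^2\,dh$, together with Fubini in $(z,h)$, to conclude $\int_{\mathbb{R}^*}\|\mathcal{F}_\heis f(h)\|_{S_2}^2\,d\mu(h)=\int_\heis|f|^2$. The case $h<0$ is handled symmetrically via $\beta_h(z,t)=\beta_{|h|}(\overline{z},-t)$, and the extension to all of $L^1(\heis)\cap L^2(\heis)$ follows by approximating in both norms simultaneously and passing to the limit.
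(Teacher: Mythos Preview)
The paper does not prove this theorem at all: it is stated with a citation to Geller and used as a black box. There is no argument in the paper to compare your proposal against.

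That said, your outline is a sound and standard route to the result. Taking the Euclidean Fourier transform in the central variable to reduce $\mathcal{F}_\heis f(h)$ to a Weyl-type transform of $f^h$, invoking Schur orthogonality (Moyal's identity) for the irreducible $\beta_h$ to identify $\|\mathcal{F}_\heis f(h)\|_{S_2}^2$ with a constant times $|h|^{-1}\|f^h\|_{L^2(\mathbb{C})}^2$, and then integrating over $h$ with the one-dimensional Plancherel theorem in $t$ is exactly how such results are established in the literature (e.g.\ Thangavelu, Folland). Your constant check $\int_\mathbb{R}|f(z,t)|^2\,dt=\pi^{-1}\int_\mathbb{R}|f^h(z)|^2\,dh$ is correct for the dual variable convention $e^{-2iht}$ used here, and the $|h|^{-1}$ from Moyal combines with this to produce the Plancherel measure $\pi^{-2}|h|\,dh$ as you anticipate. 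The only caveat is that the ``delicate'' normalization step you flag is indeed the entire content of the proof; your proposal does not carry it out, only promises to, so as written it is a correct plan rather than a complete argument.
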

\begin{theorem}[{\cite[Fourier Inversion Theorem]{geller1977fourier}}]
	If $f\in S(\heis)$, Schwartz space on $\mathbb{R}^3$, then 
	\begin{align}
	f(q)=\int_{\mathbb{R}^*}\textnormal{tr}\left(\beta_{h}(q)\mathcal{F}_\heis f({h})\right)d\mu({h}), \quad q\in \heis.\label{inversionFormula}
	\end{align}
\end{theorem}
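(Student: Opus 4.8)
The statement is Geller's Fourier inversion theorem \cite{geller1977fourier}, and the plan is not to reprove it from scratch but to transport it across the change of conventions announced in the preceding remark, checking that the exact normalization in \eqref{inversionFormula} survives. Three discrepancies separate the present setup from Geller's: (i) Geller works in the Schr\"odinger model of the irreducible representations, whereas we use the Bargmann--Fock model $\beta_h$ on $\mathcal{H}$; (ii) the group law here carries the $\tfrac12$ in the central coordinate and differs from Geller's by a dilation of the center; and (iii) we normalize every $\beta_h$ to act on the single space $\mathcal{H}$ via the $\sqrt{h}$-scaling in \eqref{BFock} rather than on an $h$-indexed family. First I would fix the unitary Bargmann transform $B:L^2(\mathbb{R})\to\mathcal{H}$ and verify that it intertwines the Schr\"odinger representation $\rho_h$ with $\beta_h$, so that $\beta_h(q)=B\,\rho_h(q)\,B^{-1}$ after the coordinate match. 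Since the operator trace is invariant under conjugation by the fixed unitary $B$, the integrand $\operatorname{tr}\!\left(\beta_h(q)\mathcal{F}_\heis f(h)\right)$ agrees with its Schr\"odinger-picture counterpart, so discrepancy (i) leaves \eqref{inversionFormula} unchanged; discrepancies (ii)--(iii) then amount to a reparameterization of the dual variable $h$ and a rescaling of each representation, whose combined effect is recorded in the Plancherel density $d\mu=\pi^{-2}|h|\,dh$ used above.

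More self-containedly, I would derive \eqref{inversionFormula} directly from the convolution structure of $\mathcal{F}_\heis$. Writing $\mathcal{F}_\heis f(h)=\int_\heis f(q')\beta_h(q')^*\,dq'$ and using that $\beta_h$ is a unitary representation, so $\beta_h(q)\beta_h(q')^*=\beta_h(qq'^{-1})$, one obtains
\[
\int_{\mathbb{R}^*}\operatorname{tr}\!\left(\beta_h(q)\mathcal{F}_\heis f(h)\right)d\mu(h)=\int_\heis f(q')\left(\int_{\mathbb{R}^*}\operatorname{tr}\,\beta_h(qq'^{-1})\,d\mu(h)\right)dq'.
\]
This reduces the theorem to the single kernel identity $\int_{\mathbb{R}^*}\operatorname{tr}\,\beta_h(g)\,d\mu(h)=\delta_e(g)$ in the distributional sense. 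The point is that $\operatorname{tr}\,\beta_h(z,t)$, computed in the Bargmann--Fock model, is supported on the center and equals $c\,|h|^{-1}\delta(z)\,e^{2iht}$; integrating against $\pi^{-2}|h|\,dh$ kills the $|h|^{-1}$, and $\int_{\mathbb{R}}e^{2iht}\,dh=\pi\,\delta(t)$, so that the factor $2$ in \eqref{BFock} and the Plancherel constant $\pi^{-2}$ conspire to produce exactly $\delta(z)\delta(t)$ with no stray constant. Substituting the kernel collapses the $q'$-integral to $f(q)$.

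The main obstacle in either route is analytic rather than algebraic: the operator-valued integral over $h\in\mathbb{R}^*$ is only conditionally convergent, so the interchange of $\operatorname{tr}$, the $h$-integral, and the $q'$-integral above must be justified. Here I would use the Schwartz hypothesis $f\in S(\heis)$ to guarantee that $\mathcal{F}_\heis f(h)$ is trace class with trace norm decaying rapidly as $|h|\to\infty$ and integrably (against $|h|\,dh$) as $h\to0$, which licenses Fubini and dominated convergence and simultaneously makes the pointwise formula \eqref{inversionFormula} meaningful. Pinning down the exact value of the central kernel—equivalently the exact Plancherel constant $\pi^{-2}$ under the present $\sqrt{h}$-normalization—is where all the bookkeeping lives; a clean sanity check is to evaluate both sides of \eqref{inversionFormula} on a single explicit Gaussian $f(z,t)$, for which $\mathcal{F}_\heis f(h)$ is an elementary multiple of an exponential of the number operator and the overall constant can be calibrated to $1$ by hand. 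Once the decay estimate and the constant are in place, the stated inversion formula follows verbatim.
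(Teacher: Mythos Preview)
The paper does not prove this statement at all: it is quoted verbatim as a known result from Geller \cite{geller1977fourier}, with only the remark that the normalization has been adjusted ``to account for the slightly different group law for $\heis$ used here.'' There is therefore no proof in the paper to compare your proposal against.

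Your sketch is a reasonable outline of how one would actually carry out that normalization check, and your two routes (transport via the Bargmann intertwiner, or compute the distributional trace kernel directly) are both standard. The one place where the argument is genuinely incomplete is the line ``$\operatorname{tr}\,\beta_h(z,t)\ldots$ equals $c\,|h|^{-1}\delta(z)\,e^{2iht}$'': you never identify $c$, and the whole point of the exercise is that this constant, together with the $\sqrt{h}$-rescaling in \eqref{BFock} and the factor $\pi^{-2}$ in $d\mu$, must close up to exactly $1$. Your proposed Gaussian sanity check would settle it, but as written the constant is asserted rather than computed. Since the paper simply cites the result, this level of detail already exceeds what is required here.
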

\noindent 
Thus $\mathcal{F}_\heis$ extends to an isometry from $L^2(\heis)$ into $L^2(S_2)$. In fact, it is onto as well. Furthermore if $f\in L^1(\heis)$, then by convolving $f$ with an approximation of identity, we may use $\eqref{inversionFormula}$ to prove $\mathcal{F}_\heis$ is injective on $L^1(\heis)$.

While the definition above is sufficient for our purposes, we remark that $\mathcal{F}_\heis$ has been extended to much more general classes of function such as tempered distributions \cite{bahouri2018tempered}. In \cite{bahouri2012phase, taylor1984noncommutative}, and much more generally in \cite{kammerer2019semi} the authors use the group Fourier transform to develop theory of pseudo-differential operators.

Finally, $\Gamma_\lambda:=\{(0, k \pi R^2 )\in\heis : k\in\mathbb{Z}\}$, where $R=1/\lambda$, is a discrete subgroup of the center of $\heis$. Since $\beta_h(z, t)=e^{2iht}\beta_h(z, 0)$, the representation $\beta_{h}$ descends to the so-called reduced Heisenberg group $\heis/\Gamma_\lambda$ if and only if ${h}\in \lambda^2 \mathbb{Z}^*$.  To a function $g\in L^1(\heis/\Gamma_\lambda)$, we associate the \textit{reduced Fourier transform}, defined as
\begin{align}
&\mathcal{F}_{\heis/\Gamma_\lambda}(g): \mathbb{Z}^*\to \mathcal{B}\left(\mathcal{H}\right)\nonumber \\
&\mathcal{F}_{\heis/\Gamma_\lambda}(g)(n):=\int_{\heis/\Gamma_\lambda}g(q)\beta_{n\lambda^2}(q)^*dq, \label{reducedFourierTransform}
\end{align}
where $\mathbb{Z}^*:=\mathbb{Z}\setminus\{0\}$. 
\begin{remark}
	The reduced Fourier transform defined above is not invertible unless we  also consider the 
	 representations $(z, t)\mapsto e^{iz\cdot\xi}; \, \xi\in\mathbb{C}$, which are trivial on the center, in the definition. (Indeed, if $\partial_t g(z, t)=0$, then $\mathcal{F}_{\heis/\Gamma_\lambda}g=0$.) This extension is not necessary for our purposes. 
\end{remark}
\section{Proof of Theorems \ref{theorem1} and  \ref{theorem2}}
\subsection{The space of geodesics}\label{geodesics}
Recall that $\heis$ acts transitively on $\mathcal{G}_{{\lambda}}$ on the left. Since
\begin{align}
(0, \pi {{R}^2})\gamma_{{\lambda}}(s)=\left({R} e^{is/{R}}, \tfrac{{R}}{2}(s+2\pi{R})\right)=\gamma_{{\lambda}}\left(s+2\pi {{R}}\right); \;{R}=1/\lambda,\label{stab}
\end{align}
the subgroup $\Gamma_\lambda:=\{(0, k\pi{R}^2)\in\heis: k\in\mathbb{Z}\}$ stabilizes $\mathcal{G}_{{\lambda}}$. Upon fixing $\gamma_\lambda$, we have the identification
\begin{align}
\mathcal{G}_{{\lambda}}&\cong\heis/\Gamma_\lambda\nonumber\\
(z, t)\gamma_{{\lambda}}&\mapsto (z, t)\Gamma_\lambda. \label{geoID}
\end{align}
When $\lambda=1$, we omit subscripts and write $\Gamma=\Gamma_1$.

Let ${d\mu_\lambda(z, t)}\cong dx\wedge dy\wedge dt$ be the Haar measure on $\heis/\Gamma_\lambda$, and let  $\mathcal{G}_{{\lambda}}$ inherit a multiple of the Haar measure, $d\mathcal{G}_{{\lambda}}:={\lambda} dx\wedge dy\wedge dt$, normalized to satisfy \eqref{subSantalo}.  Furthermore, let $d\mathcal{G}:=\lambda e^{-\lambda} dx\wedge dy\wedge dt\wedge d\lambda$, {with a weight} chosen to ensure boundedness in Proposition \ref{factorication of I}.
\subsection{Simplification to the reduced X-ray transform}
The dilation map, $\delta_\lambda(z, t):=(\lambda z, \lambda^2t)$, is an automorphism of the Heisenberg group for $\lambda\neq 0$. Furthermore,
\begin{align*}
\delta_\lambda: \Gamma_\lambda \ni (0, k\pi\lambda^{-2})\mapsto (0, k \pi)\in \Gamma,
\end{align*} 
so $\delta_\lambda: \heis/\Gamma_\lambda\to \heis/\Gamma$ is well-defined. Denote by $\delta_\lambda^*$ the pullback {operator (sometimes called the pullback relation)} defined on functions: 
\begin{align*}
\delta_\lambda^*:&L^1(\heis)\to L^1(\heis) &\delta_\lambda^*f(z, t)&=f(\lambda z, \lambda^2 t) \\
\delta_\lambda^*:&L^1(\heis/\Gamma)\to L^1(\heis/\Gamma_\lambda) &\delta_\lambda^*g\left((z, t)\Gamma_\lambda\right)&=g\left((\lambda z, \lambda^2 t)\Gamma\right).
\end{align*}
\begin{remark}
	In the sequel, we write any function $g: \heis/\Gamma_\lambda\to \mathbb{C}$ as $g(z, t)$, in place of $g\left((z, t)\Gamma_\lambda\right)$, understanding that the $t$ variable is taken mod $\pi\lambda^{-2}$.
\end{remark}
The dilation map $\delta_\lambda$ is relevant because it is a conformal map for the sub-Riemannian metric (with constant conformal factor $\lambda$). Consequently, we have the following homogeneity of the ray transform:
\begin{proposition}[Homogeneity of $I$] \label{propHomo}For $f\in C_c(\heis)$, 
	\begin{align}
	{I}_\lambda f(z, t)=(1/\lambda) \delta^*_{\lambda}\left({I}_1(\delta^*_{1/{\lambda}} f)\right)(z, t). \label{homo}
	\end{align}
\end{proposition}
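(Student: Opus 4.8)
The plan is to verify the homogeneity relation \eqref{homo} by a direct change-of-variables computation, unwinding both sides against the definition of the X-ray transform and exploiting the fact that $\delta_\lambda$ is a group automorphism satisfying $\delta_\lambda\gamma_1 = \gamma_\lambda$ after a reparameterization by arclength. First I would record the key geometric identity: since $\gamma_1(s) = (e^{is}, \tfrac{1}{2}s)$ (the model geodesic for $R=1$) and $\delta_{1/\lambda}(z,t) = (z/\lambda, t/\lambda^2)$, applying the dilation to the $\lambda$-geodesic $\gamma_\lambda(s) = (R e^{is/R}, \tfrac{1}{2}sR)$ with $R = 1/\lambda$ yields $\delta_\lambda(\gamma_1(s)) = (e^{is}/\lambda\cdot\lambda,\dots)$, so that $\delta_\lambda$ carries the unit-charge model geodesic to the charge-$\lambda$ model geodesic up to the arclength rescaling $s \mapsto s/\lambda$. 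This is the crux: the factor $1/\lambda$ in \eqref{homo} will come precisely from the Jacobian $ds \mapsto (1/\lambda)\,ds$ of this arclength reparameterization.

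Next I would expand the right-hand side of \eqref{homo} step by step. Applying the definitions, $\delta_{1/\lambda}^* f(w) = f(\delta_{1/\lambda} w)$, then integrating $I_1$ against the unit-charge model geodesic $\gamma_1$ translated by an appropriately dilated base point, and finally applying the outer pullback $\delta_\lambda^*$, which substitutes $(z,t) \mapsto (\lambda z, \lambda^2 t)$. The goal is to collect these substitutions and show that, because $\delta_\lambda$ is a homomorphism, left translation commutes with dilation in the precise sense $\delta_\lambda\big((z,t)\,\gamma_1(s)\big) = (\delta_\lambda(z,t))\,\delta_\lambda(\gamma_1(s)) = (\delta_\lambda(z,t))\,\gamma_\lambda(s/\lambda)$. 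Substituting the arclength change of variable $s' = s/\lambda$ inside the integral then reproduces $\int_\mathbb{R} f\big((z,t)\gamma_\lambda(s')\big)\,ds'$ up to the Jacobian factor $1/\lambda$, which matches $I_\lambda f(z,t)$.

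I expect the main obstacle to be purely bookkeeping: keeping the two distinct pullbacks ($\delta_\lambda^*$ on the outside acting on functions on $\heis/\Gamma$, and $\delta_{1/\lambda}^*$ on the inside acting on functions on $\heis$) properly separated, and confirming that the composite argument of $f$ that emerges equals exactly $(z,t)\gamma_\lambda(s')$ rather than some spuriously translated or dilated point. The homomorphism property of $\delta_\lambda$ is what makes the translation-by-$(z,t)$ and the dilation interact cleanly, so I would state that property explicitly before the computation. A minor technical point to address is the well-definedness of the inner pullback $\delta_\lambda^*: L^1(\heis/\Gamma) \to L^1(\heis/\Gamma_\lambda)$, which has already been established in the excerpt via $\delta_\lambda(\Gamma_\lambda) = \Gamma$, so descending to the quotient poses no difficulty. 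Since all maps involved are smooth and $f \in C_c(\heis)$, all integrals converge absolutely and the change of variables is rigorous with no convergence concerns.
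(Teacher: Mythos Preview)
Your approach is identical to the paper's: record the dilation--geodesic identity, expand the right-hand side using that $\delta_\lambda$ is a group automorphism, and change variables in $s$ to pick up the Jacobian factor $1/\lambda$. One correction to make when you execute: the identity you need is $\delta_{1/\lambda}\gamma_1(s)=\gamma_\lambda(s/\lambda)$, not $\delta_\lambda\gamma_1(s)=\gamma_\lambda(s/\lambda)$ as you wrote---unfolding $\delta^*_{1/\lambda}f=f\circ\delta_{1/\lambda}$ applies $\delta_{1/\lambda}$ (not $\delta_\lambda$) to the argument of $f$, and it is $\delta_{1/\lambda}$ that contracts the unit helix $\gamma_1$ down to the radius-$1/\lambda$ helix $\gamma_\lambda$.
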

\begin{proof}
	Note that dilation preserves geodesics but rescales their speed: 
	\begin{align}
	\delta_{1/\lambda}\gamma_1(s)=\gamma_{\lambda}(s/\lambda). \label{rescale1}
	\end{align} 
	Then
	\begin{align*}
	 \delta^*_{\lambda}\left({I}_1(\delta^*_{1/{\lambda}} f)\right)(z, t)
	&=I_1\left(\delta^*_{1/\lambda}f\right)(\lambda z, \lambda^2 t) \\
	&=\int_\mathbb{R}\delta^*_{1/\lambda}f\left((\lambda z, \lambda^2 t)\gamma_1(s)\right)ds \\
	&=\int_\mathbb{R}f\left(\delta_{1/\lambda}(\lambda z, \lambda^2 t)\delta_{1/\lambda}\left(\gamma_1(s)\right)\right)ds, &\text{because }\delta_\lambda\in\text{Aut}(\heis),\\
	&=\int_\mathbb{R}f\left((z, t)\gamma_\lambda(s/\lambda)\right)ds, &\text{by } \eqref{rescale1},\\
	&=\lambda \int_\mathbb{R}f\left((z, t)\gamma_\lambda(s)\right)ds=\lambda I_\lambda f(z, t).
	\end{align*}
\end{proof}
Next, we exploit the periodic symmetry of Heisenberg geodesics to reduce the X-ray transform to one period.
\begin{proposition} For any $\lambda>0$, ${I}_\lambda:L^1(\heis)\to L^1(\mathcal{G}_{{\lambda}})$ is well-defined, bounded, and factors in the following way:
	\[ 
	\begin{tikzcd}
	L^1(\heis) \arrow[d, "P_{\lambda}" left=1]  \arrow[r, "{I}_\lambda"]  &L^1(\mathcal{G}_{{\lambda}}\cong \heis/\Gamma_\lambda)\\
	L^1(\heis/\Gamma_\lambda) \arrow[ur, "{I}^\textnormal{red}_{\lambda}" below=10, pos=0.75]
	\end{tikzcd}
	\]
	where the maps which we call \textit{Central Periodization} and the \textit{reduced X-ray transform} are given by
	\begin{align*}
	P_{\lambda}f\left(z, t\right)=\sum_{k\in\mathbb{Z}}f\left(z, t+k\pi{R}^2\right), && {I}^\textnormal{red}_{\lambda}g(z, t)=\int_0^{2\pi {R}}g\left((z, t)\gamma_{{\lambda}}(s)\right)ds; \, {R}=1/\lambda.
	\end{align*}
	Furthermore, ${I}:L^1(\heis)\to L^1(\mathcal{G},d\mathcal{G})$ is well-defined and bounded.\label{factorication of I}
\end{proposition}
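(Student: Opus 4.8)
The plan is to prove the factorization $I_\lambda = I^\textnormal{red}_\lambda \circ P_\lambda$ first, then bound each factor separately, and finally integrate in $\lambda$ against the weight $e^{-\lambda}$. Throughout I would reduce to nonnegative $f$ (replacing $f$ by $|f|$), so that Tonelli's theorem licenses every interchange of summation and integration and simultaneously yields both almost-everywhere finiteness (hence well-definedness on $L^1$) and the norm estimates; the general complex-valued case then follows by splitting into real/imaginary and positive/negative parts.

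First I would verify the factorization. Expanding $I^\textnormal{red}_\lambda(P_\lambda f)(z,t) = \int_0^{2\pi R}\sum_{k\in\mathbb{Z}} f\big((z,t)\gamma_\lambda(s)(0,k\pi R^2)\big)\,ds$ and using that $(0,k\pi R^2)$ is central together with the periodicity relation \eqref{stab}, iterated to $(0,k\pi R^2)\gamma_\lambda(s) = \gamma_\lambda(s+2\pi R k)$, each summand becomes $f\big((z,t)\gamma_\lambda(s+2\pi R k)\big)$. Interchanging the nonnegative sum and integral and substituting $s\mapsto s+2\pi R k$, the intervals $[2\pi R k,\,2\pi R(k+1))$ tile $\mathbb{R}$, so the combined sum and integral collapse to $\int_\mathbb{R} f\big((z,t)\gamma_\lambda(s)\big)\,ds = I_\lambda f(z,t)$.

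Next I would bound the two factors. For Central Periodization, unfolding over a fundamental domain for $\Gamma_\lambda$ (the slab $t\in[0,\pi R^2)$) gives $\|P_\lambda f\|_{L^1(\heis/\Gamma_\lambda)} \le \|f\|_{L^1(\heis)}$, so $P_\lambda$ is a contraction into $L^1(\heis/\Gamma_\lambda, d\mu_\lambda)$. For the reduced transform, the key observation is that for each fixed $s$ the map $(z,t)\mapsto (z,t)\gamma_\lambda(s)$ is right translation by a fixed group element; since $\heis$ is unimodular and $\Gamma_\lambda$ is central (hence normal), this right translation descends to $\heis/\Gamma_\lambda$ and preserves $d\mu_\lambda$. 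Applying Fubini and this invariance, and recalling $d\mathcal{G}_\lambda = \lambda\,d\mu_\lambda$ together with the interval length $2\pi R = 2\pi/\lambda$, I obtain $\|I^\textnormal{red}_\lambda g\|_{L^1(\mathcal{G}_\lambda)} \le \lambda\cdot(2\pi/\lambda)\,\|g\|_{L^1(d\mu_\lambda)} = 2\pi\,\|g\|_{L^1(d\mu_\lambda)}$. Composing with the contraction $P_\lambda$ yields $\|I_\lambda f\|_{L^1(\mathcal{G}_\lambda)} \le 2\pi\,\|f\|_{L^1(\heis)}$, crucially \emph{uniformly} in $\lambda$.

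Finally, since $d\mathcal{G} = e^{-\lambda}\,d\mathcal{G}_\lambda\,d\lambda$, Tonelli gives $\|If\|_{L^1(\mathcal{G},d\mathcal{G})} = \int_0^\infty e^{-\lambda}\,\|I_\lambda f\|_{L^1(\mathcal{G}_\lambda)}\,d\lambda \le 2\pi\,\|f\|_{L^1(\heis)}\int_0^\infty e^{-\lambda}\,d\lambda = 2\pi\,\|f\|_{L^1(\heis)}$; here the weight $e^{-\lambda}$ is exactly what converts the uniform-in-$\lambda$ bound into an integrable one, confirming the remark that it was chosen to ensure boundedness. I expect the only genuinely delicate point to be the claim that right translation by $\gamma_\lambda(s)$ descends to the quotient and preserves the induced Haar measure; I would justify it by noting that centrality of $\Gamma_\lambda$ makes $\heis/\Gamma_\lambda$ a group on which right translation is measure preserving by unimodularity of $\heis$. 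Everything else is bookkeeping with Tonelli and the periodicity relation.
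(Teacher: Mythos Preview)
Your proposal is correct and follows essentially the same route as the paper: factorization via the periodicity relation \eqref{stab}, boundedness of $P_\lambda$ by unfolding over a fundamental domain (the paper phrases this as uniqueness of Haar measure, equation \eqref{poisson}), boundedness of $I^\textnormal{red}_\lambda$ via right-invariance of the Haar measure on the unimodular quotient $\heis/\Gamma_\lambda$, and finally integration in $\lambda$ against $e^{-\lambda}$. The one organizational difference is that the paper first invokes the homogeneity $I_\lambda f = \lambda^{-1}\delta_\lambda^* I_1(\delta_{1/\lambda}^* f)$ (Proposition \ref{propHomo}) to reduce everything to $\lambda=1$ and then transfers the bound back, whereas you carry $\lambda$ through directly and read off the uniform constant $2\pi$ from the cancellation $\lambda\cdot(2\pi/\lambda)$; your route is marginally more self-contained, the paper's is marginally cleaner notationally, but the substance is identical.
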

\begin{proof}
	By homogeneity \eqref{homo}, and since pullback by $\delta_\lambda$ is bounded in the above $L^1$ spaces for $\lambda\neq 0$, it suffices to prove the proposition for $\lambda=1$. For this case, we omit subscripts and write $P$ and ${I}^\text{red}$.
	The map
	\begin{align*}
	C_c(\heis)\ni f  \mapsto \int_{\heis/\Gamma}{P}f\left(z, t\right){d\mu_1(z, t)}
	\end{align*}
	is a left-invariant positive linear functional on $ C_c(\heis)$. By uniqueness of the Haar measure on $\heis$ (which is just the Lebesgue measure), and the  Riesz-Representation theorem, $\exists c>0$ such that
	\begin{align}
	\int_{\heis/\Gamma}{P}f\left(z, t\right){d\mu_1(z, t)}=c\int_\heis f(z, t)d(z, t), \label{poisson}
	\end{align}
	and one may check that $c=1$ (see \cite[Thm. 2.49]{folland2016course} for the general statement). 
	So in particular, $||{P}f||_{L^1(\heis/\Gamma)}\leq ||f||_{L^1(\heis)}.$
	
	For $g\in C_c(\heis/\Gamma)$, 
	\begin{align*}
	||{I}^\textnormal{red}g||_{L^1(\mathcal{G}_1)}
	&=\int_{\mathcal{G}_1}|{I}^\textnormal{red}g\left(z, t\right)|d\mathcal{G}_1\\
	&=\int_{\mathbb{H}/\Gamma}\bigg|\int_0^{2\pi}g\left((z, t)\gamma_1(s)\right)ds\bigg|{d\mu_1(z, t)}\\
	&\leq \int_0^{2\pi}\int_{\heis/\Gamma}|g\left((z, t)\gamma_{1}(s)\right)|{d\mu_1(z, t)} ds\\
	&=\int_0^{2\pi}\int_{\heis/\Gamma}|g\left((z, t)\right)|{d\mu_1\left((z, t)\gamma_{1}(s)^{-1}\right)} ds\\
	&=\int_0^{2\pi}\int_{\heis/\Gamma}|g\left((z, t)\right)|{d\mu_1(z, t)} ds, && \text{since }\heis/\Gamma\text{ is unimodular, (i.e. } \mu_1 \text{ is bi-invariant)}\\
	&=2\pi||g||_{L^1(\heis/\Gamma)}.
	\end{align*}
	Thus ${P}$ and ${I}^\textnormal{red}$ extend to $L^1$ bounded maps. Given $f\in C_c(\heis)$, since $Pf\in C_c(\hred)$ and
	\begin{align*}
	{I}^\textnormal{red}{P}f(z, t)
	&=\int_0^{2\pi}\sum_{k\in\mathbb{Z}}f\left((z, t+k\pi)\gamma_1(s)\right) ds 
	=\int_0^{2\pi}\sum_{k\in\mathbb{Z}}f\left((z, t)\gamma_{1}(s+2\pi  k)\right)ds, \quad \text{by }\eqref{stab}, \\
	&=\sum_{k\in\mathbb{Z}}\int_0^{2\pi}f\left((z, t)\gamma_{1}(s+2\pi  k)\right)ds
	=\sum_{k\in\mathbb{Z}}\int_{2\pi k}^{2\pi (k+1)}f\left((z, t)\gamma_{1}(s)\right)ds={I}_1f(z, t),
	\end{align*}
	we have $||{I}_1f||_{L^1(\mathcal{G}_1)}\leq 2\pi||f||_{L^1(\heis)}$. The third equality follows from uniform convergence of the integrand on the interval $[0, 2\pi]\ni s$. Therefore $I_1$ extends to a bounded map from $L^1(\heis)$ to $L^1(\mathcal{G}_1)$. In particular one may check, using \eqref{homo}, that $||I_\lambda f||_{L^1(\mathcal{G}_\lambda)}=||I_1f||_{L^1(\mathcal{G}_1)}\leq 2\pi||f||_{L^1(\heis)}$.
	
	Finally, for $f\in L^1(\heis)$, we have
	\begin{align*}
	||If||_{L^1(\mathcal{G})}:=&\int_\mathcal{G}|If(z, t, \lambda)|d\mathcal{G}\\
	=&\int_0^\infty\int_\mathcal{G_\lambda}|I_\lambda f(z, t)|d\mathcal{G}_\lambda e^{-\lambda}d\lambda\\
	\leq& 2\pi||f||_{L^1(\heis)} \int_0^\infty e^{-\lambda}d\lambda=2\pi||f||_{L^1(\heis)}
	\end{align*}
	as desired.
\end{proof}
\begin{remark}
	The reduced X-ray transform $I^\text{red}: L^1(\heis/\Gamma)\to L^1(\mathcal{G}_1)$ is not injective. In fact, if
	\begin{align*}
	g\left(z, t\right)=z^2e^{-|z|^2}e^{4it},
	\end{align*}
	then $I^\textnormal{red}g=0$. In Appendix \ref{redSVD} we give essentially a Singular Value Decomposition of $I^\textnormal{red}$ {and characterize its kernel on $L^2(\heis/\Gamma)$.}
\end{remark}
\begin{remark}
	From these computations, we may also deduce a sub-Riemannian Santal\'{o} formula:
	\begin{align}
	\int_{\mathcal{G}_\lambda}I_\lambda f(z, t)d\mathcal{G}_\lambda=2\pi\int_\heis f(z, t)d(z, t), \quad f\in L^1(\heis).\label{subSantalo}
	\end{align}
	This is an example of a Santal\'{o} formula like those proven in \cite{prandi2015sub}, but without the latter's restriction to the ``reduced unit cotangent bundle."
\end{remark}
\subsection{Lemmas on the group Fourier transform}
We now prove a few general properties of the group Fourier transform. The first is a Poisson Summation Formula for $\heis\to\heis/\Gamma$ - a quick consequence of the classical version. The author has not found a reference for this version, but does not believe it is new. 
\begin{lemma}[Poisson Summation Formula]\label{summationformula}
	If $f\in L^1(\heis)$, then
	\begin{align*}
	\mathcal{F}_{\heis/\Gamma}\left({P}f\right)(n)=\mathcal{F}_\heis f\left(n\right), \quad \forall n\in\mathbb{Z}^*.
	\end{align*}
\end{lemma}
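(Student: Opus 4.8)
The plan is to reduce everything to a one-dimensional Poisson summation in the central variable $t$, exploiting the fact that the representation $\beta_n$ descends to $\heis/\Gamma$ precisely when $n\in\mathbb{Z}^*$. First I would unwind the definition of the reduced Fourier transform, realizing the integral over $\heis/\Gamma$ as an integral over the fundamental domain $\mathbb{C}\times[0,\pi)$ for the action of $\Gamma=\{(0,k\pi)\}$ on $\heis$, with Haar measure $d\mu_1\cong dx\wedge dy\wedge dt$. Using that $Pf\in L^1(\heis/\Gamma)$ (boundedness of Central Periodization, already established in Proposition \ref{factorication of I}) so that the left-hand side is defined, this gives
\begin{align*}
\mathcal{F}_{\heis/\Gamma}\left(Pf\right)(n)=\int_{\mathbb{C}}\int_0^\pi\Big(\sum_{k\in\mathbb{Z}}f(z,t+k\pi)\Big)\,\beta_n(z,t)^*\,dt\,dz.
\end{align*}

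The key observation is that since $\beta_n(z,t)=e^{2int}\beta_n(z,0)$ and $e^{2in\pi}=1$ for $n\in\mathbb{Z}$, the operator $\beta_n(z,t)^*$ is $\pi$-periodic in $t$, i.e. $\beta_n(z,t+k\pi)^*=\beta_n(z,t)^*$; this is exactly the condition, noted in the preliminaries, that allows $\beta_n$ to descend to a representation of $\heis/\Gamma$. I would then absorb the shift into the representation and fold the integral back up to all of $\mathbb{R}$:
\begin{align*}
\int_0^\pi\sum_{k\in\mathbb{Z}}f(z,t+k\pi)\,\beta_n(z,t)^*\,dt=\sum_{k\in\mathbb{Z}}\int_0^\pi f(z,t+k\pi)\,\beta_n(z,t+k\pi)^*\,dt=\int_{\mathbb{R}}f(z,t)\,\beta_n(z,t)^*\,dt,
\end{align*}
and integrating in $z$ recovers exactly $\mathcal{F}_\heis f(n)$.

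The one technical point requiring care is the interchange of the infinite sum with the (operator-valued) Bochner integral and the folding step. Since $\beta_n(q)^*$ is unitary, one has $\|f(q)\beta_n(q)^*\|_{\mathcal{B}(\mathcal{H})}=|f(q)|$, whence $\sum_{k}\int_{\mathbb{C}\times[0,\pi)}|f(z,t+k\pi)|\,dt\,dz=\|f\|_{L^1(\heis)}<\infty$; absolute convergence in the Bochner sense then justifies both the interchange and the folding by dominated convergence. Equivalently — and this is what is meant by calling the result a quick consequence of the classical version — one can factor $\beta_n(z,t)^*=e^{-2int}\beta_n(z,0)^*$, pull the scalar out, and read the inner $t$-integral as the assertion that the $n$-th Fourier coefficient of the $\pi$-periodization of $t\mapsto f(z,t)$ equals the value of its Fourier transform at the matching frequency, i.e. the classical $1$D Poisson summation formula applied fiberwise in $z$.

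I do not expect a genuine obstacle here; the argument is essentially a bookkeeping computation. The only places that genuinely demand attention are the convergence and interchange step described above, and confirming that well-definedness of the left-hand side forces exactly $n\in\mathbb{Z}^*$ (equivalently $h\in\lambda^2\mathbb{Z}^*$ in the general-$\lambda$ formulation), which is where the hypothesis $n\in\mathbb{Z}^*$ enters and is used.
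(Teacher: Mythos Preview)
Your proposal is correct and follows essentially the same route as the paper: use $\beta_n(z,t+k\pi)^*=\beta_n(z,t)^*$ for $n\in\mathbb{Z}$ to absorb the shift, then unfold the sum over $\Gamma$ and the integral over a fundamental domain into a single integral over $\heis$. The only cosmetic difference is that the paper tests against arbitrary $F,G\in\mathcal{H}$ to reduce to scalar-valued integrals (justifying the unfolding via Cauchy--Schwarz and the Weil integration formula \eqref{poisson}), whereas you work directly with the Bochner integral and bound by operator norm; both are standard and equivalent ways to handle the same interchange.
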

\begin{proof}
	For $F, G \in \mathcal{H}$,
	\begin{align*}
	\mathcal\langle\mathcal{F}_{\heis/\Gamma}\left({P}f\right)(n)F, G\rangle_\mathcal{H}:=&\int_{\heis/\Gamma}\sum_{k\in\mathbb{Z}}f(z, t+k\pi)\langle\beta_{n}(z, t)^*F, G\rangle_\mathcal{H}{d\mu_1(z, t)} \\
	=&\int_{\heis/\Gamma}\sum_{k\in\mathbb{Z}}f(z, t+k\pi )\langle\beta_{n}(z, t+k\pi )^*F, G\rangle_\mathcal{H}{d\mu_1(z, t)}, \quad\text{since }\beta_{n}(z, t)=e^{2int}\beta_{n}(z, 0), \\
	=&\int_\heis f(z, t)\langle\beta_{n}(z, t)^*F, G\rangle_\mathcal{H}d(z, t),
	\end{align*}
	where the third equality follows from \eqref{poisson}, and the fact that $f(z, t)\langle\beta_{n}(z, t)^*F, G\rangle_\mathcal{H}\in L^1(\heis)$ by the Cauchy-Schwartz inequality. Since $F$ and $G$ were arbitrary, the identity follows from the definition of the Bochner integral.
\end{proof}
Next, we observe how the Fourier transforms behave with respect to dilations.
\begin{lemma}[Dilation Property]
	Fix $\lambda>0$.\\ If $f\in L^1(\heis)$, then
	\begin{align*}
	\mathcal{F}_\heis\left(\delta_\lambda^*f\right)({h})=\lambda^{-4}\mathcal{F}_\heis f({h}/\lambda^2), \quad \forall {h}\in\mathbb{R}^*.
\end{align*}
 And if $g\in L^1(\heis/\Gamma)$, then
\begin{align*}
	\mathcal{F}_{\heis/\Gamma_\lambda}\left(\delta_\lambda^*g\right)(n)&=\lambda^{-4}\mathcal{F}_{\heis/\Gamma}(g)(n), \quad \forall n\in\mathbb{Z}^*.
	\end{align*}\label{dilation lemma}
\end{lemma}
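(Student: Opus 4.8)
The plan is to reduce both identities to a single scaling relation for the representation $\beta_h$ under the dilation automorphism, after which each statement follows by a change of variables in the defining Bochner integral.

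First I would establish the key algebraic fact that dilation intertwines the representations by rescaling the semiclassical parameter:
\[
\beta_h\bigl(\delta_{1/\lambda}(z,t)\bigr)=\beta_{h/\lambda^2}(z,t),\qquad \forall (z,t)\in\heis,\ \forall h\in\mathbb{R}^*,\ \forall \lambda>0.
\]
For $h>0$ this is a direct substitution into \eqref{BFock}: writing $\delta_{1/\lambda}(z,t)=(z/\lambda,\,t/\lambda^2)$ and evaluating $\beta_h$ there, each ingredient transforms compatibly — the phase becomes $2ih\,(t/\lambda^2)=2i(h/\lambda^2)t$, the cross term becomes $\sqrt{h}\,\zeta\,\overline{(z/\lambda)}=\sqrt{h/\lambda^2}\,\zeta\bar z$ (using that $\lambda>0$ is real), the Gaussian becomes $\tfrac{h}{2}|z/\lambda|^2=\tfrac{1}{2}(h/\lambda^2)|z|^2$, and the argument shift becomes $\sqrt{h}\,(z/\lambda)=\sqrt{h/\lambda^2}\,z$ — so the operator equals $\beta_{h/\lambda^2}(z,t)$ exactly. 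For $h<0$ the same conclusion follows by invoking the convention $\beta_h(z,t)=\beta_{|h|}(\bar z,-t)$ and applying the $h>0$ case to $\beta_{|h|}$.

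Given this relation, the first identity is immediate. Expanding the definition and substituting $(w,s)=\delta_\lambda(z,t)=(\lambda z,\lambda^2 t)$ gives
\[
\mathcal{F}_\heis(\delta_\lambda^* f)(h)=\int_\heis f(\lambda z,\lambda^2 t)\,\beta_h(z,t)^*\,d(z,t)=\lambda^{-4}\int_\heis f(w,s)\,\beta_{h/\lambda^2}(w,s)^*\,d(w,s),
\]
where the Jacobian $\lambda^{-4}$ comes from scaling the two real dimensions of $z$ by $\lambda$ and the dimension of $t$ by $\lambda^2$, and $\beta_h(z,t)=\beta_h(\delta_{1/\lambda}(w,s))=\beta_{h/\lambda^2}(w,s)$ by the scaling relation. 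The right-hand integral is exactly $\mathcal{F}_\heis f(h/\lambda^2)$. Moving the substitution under the operator-valued integral is justified by pairing against arbitrary $F,G\in\mathcal{H}$, precisely as in the proof of Lemma \ref{summationformula}.

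The second identity is the same computation transported to the quotient, and this is where the only real bookkeeping lies. I would use that $\delta_\lambda$ descends to a diffeomorphism $\heis/\Gamma_\lambda\to\hred$ (as noted before the statement), that $\beta_{n\lambda^2}$ descends to $\heis/\Gamma_\lambda$ since $n\lambda^2\in\lambda^2\mathbb{Z}^*$, and that the fundamental domain $t\in[0,\pi\lambda^{-2})$ for $\Gamma_\lambda$ maps onto $s\in[0,\pi)$ for $\Gamma$ under $s=\lambda^2 t$, so that $d\mu_\lambda=\lambda^{-4}d\mu_1$ under this identification. The identical change of variables, now with the scaling relation applied at $h=n\lambda^2$ to obtain $\beta_{n\lambda^2}(\delta_{1/\lambda}(w,s))=\beta_n(w,s)$, identifies the integral as $\lambda^{-4}\mathcal{F}_{\heis/\Gamma}(g)(n)$. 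I expect no genuine obstacle here: all the content is in the scaling relation, and the only points needing explicit care are the sign convention for $h<0$ and confirming the fundamental-domain matching so that the factor $\lambda^{-4}$ is counted exactly once.
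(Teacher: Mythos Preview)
Your proposal is correct and follows essentially the same route as the paper: change of variables with Jacobian $\lambda^{-4}$, then the scaling relation $\beta_h\circ\delta_{1/\lambda}=\beta_{h/\lambda^2}$, with the quotient case handled analogously. The paper's proof simply writes the chain of equalities without separately verifying the scaling relation or discussing the fundamental-domain matching, so your argument is more explicit but not different in substance.
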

\noindent We expect the above exponent of $\lambda$ because the homogeneous dimension of the Heisenberg group is $4$. 
\begin{proof}
	\begin{align*}
	\mathcal{F}_\heis\left(\delta_\lambda^*f\right)({h})
	&=\int_\heis f(\lambda z, \lambda^2 t)\beta_h(z, t)^*d(z, t)=\lambda^{-4}\int_\heis f(z, t)\beta_h\left(\lambda^{-1}z, \lambda^{-2}t\right)^*d(z, t)\\
	&=\lambda^{-4}\int_\heis f(z, t)\beta_{h/\lambda^2}(z, t)^*d(z, t)=\lambda^{-4}\mathcal{F} _\heis f(h/\lambda^2),
	\end{align*}
	and the proof for $\mathcal{F}_{\heis/\Gamma}$ is nearly identical. 
\end{proof}
\subsection{Proof of Theorem \ref{theorem2}}
The reduced X-ray transform $I^\text{red}$ is equivariant with respect to left translation by $\heis$ in the sense that 
\begin{align*}
\xred \left(L^*_{(w, s)}g\right)(z, t)
&=\int_0^{2\pi}L^*_{(w, s)}g\left((z, t)\gamma_1(\theta)\right)d\theta
=\int_0^{2\pi}g\left((w, s)(z, t)\gamma_1(\theta)\right)d\theta\\
&=\xred g\left((w, s)(z, t)\right)=\left(L^*_{(w, s)}\xred g\right)(z, t).
\end{align*}Thus, $\xred$ is a convolution operator. In fact, if we define the compactly supported distribution $\kappa\in\mathcal{E}'(\hred)$ by $\kappa(g):= \int_0^{2\pi}g\left(\gamma_1(\theta)^{-1}\right)d\theta$ then $\xred g=\kappa*g$, where $f*g\left(z, t\right):=\int_\hred f\left((z, t)(w, s)^{-1}\right)g(w, s)d(w, s)\Gamma$.  Therefore, by an analogous Paley-Wiener theory \cite[Ch.1]{thangavelu2012harmonic}, we expect $\mathcal{F}_\hred(\kappa)(n)\in \mathcal{B}\left(\mathcal{H}\right)$, and $\mathcal{F}_\hred\left({\xred g}\right)(n)=\mathcal{F}_\hred(\kappa)(n)\circ\mathcal{F}_\hred(g)(n)$. The next proposition makes this heuristic explicit.
\begin{proposition}
	If $g\in L^1(\heis/\Gamma)$, then for all $n\in\mathbb{Z}^*$, 
	\begin{align*}
	\mathcal{F}_{\heis/\Gamma}\left({I}^\textnormal{red}g\right)(n)={(2\pi)}\mathcal{J}_n\circ\mathcal{F}_{\heis/\Gamma}(g)(n)
	\end{align*}
	with $\mathcal{J}_n$ defined in \eqref{bessel op}.
	\begin{proof}
		\begin{align*}
		\mathcal{F}_{\heis/\Gamma}\left({I}^\textnormal{red}g\right)(n)
		:=&\int_{\heis/\Gamma}\int_0^{2\pi }g\left((z, t)\gamma_1(s)\right)\beta_{n}(z, t)^*ds{d\mu_1(z, t)}\\
		=&\int_0^{2\pi}\int_{\heis/\Gamma}g\left(z, t\right)\beta_{n}\left((z, t)\gamma_1(s)^{-1}\right)^*{d\mu_1(z, t)} ds, & \text{since }\heis/\Gamma\text{ is unimodular,}\\
		=&\int_0^{2\pi }\int_{\heis/\Gamma}g\left(z, t\right)\beta_{n}\left(\gamma_1(s)\right)\circ\beta_{n}(z, t)^*{d\mu_1(z, t)} ds, &\text{since }\beta_{n}(z, t)\text{ is a unitary rep},\\
		=&\int_0^{2\pi}\beta_{n}\left(\gamma_1(s)\right)ds\circ\int_{\heis/\Gamma}g\left(z, t\right)\beta_{n}(z, t)^*{d\mu_1(z, t)}\\		
		=&(2\pi)\mathcal{J}_n\circ \mathcal{F}_{\heis/\Gamma}\left(g\right)(n)
		\end{align*}
		\noindent where the ``multiplier"
		\begin{align}
		\mathcal{J}_n:=\frac{1}{2\pi}\int_0^{2\pi}\beta_{n}\left(\gamma_1(s)\right)ds \label{besselop2}
		\end{align}
		is {given explicitly on $F\in \mathcal{H}$ by} \label{multiplier prop}
		\begin{align*}
			\frac{1}{2\pi}\int_0^{2\pi}\beta_{n}\left(\gamma_1(s)\right)F(\zeta)ds
			&=\frac{1}{2\pi}\int_0^{2\pi}\beta_n\left(e^{is}, s/2\right)F\left(\zeta \right)ds\\
			&=\frac{1}{2\pi}\int_0^{2\pi}e^{ins-\sqrt{n}\zeta e^{-is}-\frac{n}{2}}F\left(\zeta +\sqrt{n}e^{is}\right)ds, && z=\sqrt{n}e^{is},\\
			&=\frac{1}{2\pi i }\left(\frac{1}{en}\right)^{n/2}\oint z^{n-1}e^{-n\zeta/z}F\left(\zeta+z\right)dz
		\end{align*}
		which is the same as \eqref{bessel op}.
	\end{proof}

\end{proposition}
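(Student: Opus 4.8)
The statement is the convolution theorem for $\fhred$ made concrete: the discussion preceding the proposition already identifies $\xred g=\kappa*g$ as a left-convolution operator, so its group Fourier transform ought to be composition on the left by $\fhred(\kappa)(n)$, and the claim is precisely that this operator equals $2\pi\mathcal{J}_n$. The plan is to verify the identity by unwinding both definitions and manipulating the resulting operator-valued integral, and then to evaluate the multiplier $\mathcal{J}_n:=\tfrac{1}{2\pi}\int_0^{2\pi}\beta_n(\gamma_1(s))\,ds$ as the contour integral \eqref{bessel op}.

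First I would write $\fhred(\xred g)(n)=\int_{\hred}\int_0^{2\pi} g\left((z,t)\gamma_1(s)\right)\beta_n(z,t)^*\,ds\,d\mu_1(z,t)$. To make the operator-valued steps rigorous I follow the device of Lemma \ref{summationformula} and pair the whole expression against arbitrary $F,G\in\mathcal{H}$, which turns it into a scalar double integral; absolute integrability, hence Fubini, comes from the bound $\int_0^{2\pi}\int_{\hred}|g((z,t)\gamma_1(s))|\,d\mu_1\,ds=2\pi\|g\|_{L^1(\hred)}$, using the bi-invariance of $\mu_1$ already exploited in Proposition \ref{factorication of I}. After exchanging the order of integration I substitute $(z,t)\mapsto(z,t)\gamma_1(s)$, which is measure-preserving by unimodularity of $\hred$; this replaces the argument of $g$ by $(z,t)$ at the cost of replacing $\beta_n(z,t)^*$ by $\beta_n\!\left((z,t)\gamma_1(s)^{-1}\right)^*$. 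The homomorphism property of $\beta_n$ together with unitarity, so that $\beta_n(\gamma_1(s)^{-1})^*=\beta_n(\gamma_1(s))$, factors this as $\beta_n(\gamma_1(s))\circ\beta_n(z,t)^*$, whose first factor is independent of $(z,t)$. Pulling the $s$-integral outside the $\hred$-integral then isolates $\int_0^{2\pi}\beta_n(\gamma_1(s))\,ds=2\pi\mathcal{J}_n$ acting on the left of $\fhred(g)(n)$, which is the asserted identity.

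Second, I would evaluate $\mathcal{J}_n$ for $n>0$ by inserting $\gamma_1(s)=(e^{is},s/2)$ into \eqref{BFock}, giving $\beta_n(\gamma_1(s))F(\zeta)=e^{ins-\sqrt{n}\zeta e^{-is}-n/2}F(\zeta+\sqrt{n}e^{is})$, and then making the substitution $z=\sqrt{n}e^{is}$, under which $ds=dz/(iz)$ and $s:0\to 2\pi$ traces $|z|=\sqrt{n}$ counterclockwise. This converts the $s$-average into $\tfrac{1}{2\pi i}(ne)^{-n/2}\oint z^{n-1}e^{-n\zeta/z}F(\zeta+z)\,dz$, matching \eqref{bessel op}, the constant assembling as $n^{-n/2}e^{-n/2}=(ne)^{-n/2}$. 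The case $n<0$ then follows from the convention $\beta_h(z,t)=\beta_{|h|}(\overline{z},-t)$ built into \eqref{BFock}, which yields $\mathcal{J}_{-n}=\mathcal{J}_n$ and so covers all $n\in\mathbb{Z}^*$. The only genuinely delicate point is legitimizing the interchange of the operator-valued $s$-average with the Bochner integral defining $\fhred(g)(n)$; the weak-pairing reduction to a scalar Fubini argument disposes of this, after which the remaining work is the change-of-variables bookkeeping above and the one-line contour substitution.
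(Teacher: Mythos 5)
Your proposal is correct and follows essentially the same route as the paper: unwind the definition, shift $\gamma_1(s)$ out of the argument of $g$ by the measure-preserving change of variables (unimodularity), factor via the homomorphism and unitarity of $\beta_n$ to isolate $\int_0^{2\pi}\beta_n(\gamma_1(s))\,ds = 2\pi\mathcal{J}_n$, and then evaluate the multiplier by the substitution $z=\sqrt{n}e^{is}$, $ds = dz/(iz)$, with the constant $n^{-n/2}e^{-n/2}=(en)^{-n/2}$ and the convention $\mathcal{J}_{-n}=\mathcal{J}_n$ handling $n<0$ exactly as in the paper. Your only addition is the explicit weak pairing against $F,G\in\mathcal{H}$ to justify Fubini for the operator-valued integrals, a point the paper's proof leaves implicit; this is a harmless (indeed welcome) refinement, not a different argument.
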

\begin{remark}
$\mathcal{J}_n$ is similar to the ``representation integral" considered in \cite{ilmavirta2016radon}, though $s\mapsto \beta_{n}\left(\gamma_1(s)\right)$ is not a homomorphism. Such integration of  representations over geodesics also appear in \cite{guillarmou2017reconstruction}, where the authors used the Principal Series representations of $SL(2, \mathbb{R})$ to show that the normal operator ${I}^*{I}$ associated to the X-ray transform on constant negative curvature surfaces is a nontrivial function of the Laplace-Beltrami operator.
\end{remark}
Together with Proposition \ref{factorication of I}, these imply the Heisenberg Fourier Slice Theorem: 
\begin{proof}[Proof of Theorem \ref{theorem2}]
	Let $f\in L^1(\heis)$, $\lambda>0$ and $n\in\mathbb{Z}^*$. By Proposition \ref{factorication of I} and \ref{multiplier prop} , we have
	\begin{align}
	\mathcal{F}_{\heis/\Gamma}\left({I}_1f\right)(n)
	=\mathcal{F}_{\heis/\Gamma}\left({I}^\text{red}P f\right)(n)
	=(2\pi)\mathcal{J}_n\circ\mathcal{F}_{\heis/\Gamma}\left(P f\right)(n)
	=(2\pi)\mathcal{J}_n\circ\mathcal{F}_\heis f(n).\label{redSlice}
	\end{align}
	Exploiting homogeneity of $I$,
	\begin{align*}
	\mathcal{F}_{\heis/\Gamma_{\lambda}}\left({I}_{\lambda}f\right)(n)
	&=\lambda^{-1}\mathcal{F}_{\heis/\Gamma_{\lambda}}\left(\delta^*_{\lambda}{I}_1\left(\delta^*_{1/\lambda} f\right)\right)(n), & \text{Proposition }\ref{propHomo}\\
	&=\lambda^{-5}\mathcal{F}_{\heis/\Gamma}\left({I}_1\left(\delta^*_{1/\lambda} f\right)\right)(n), &\text{Lemma }\ref{dilation lemma} \\
	&=2\pi \lambda^{-5}\mathcal{J}_n\circ\mathcal{F}_{\heis}\left(\delta^*_{1/\lambda} f\right)(n), & \text{by }\eqref{redSlice},\\
	&=2\pi \lambda^{-1}\mathcal{J}_n\circ\mathcal{F}_{\heis}f(n\lambda^2), & \text{Lemma }\ref{dilation lemma}
	\end{align*}
	as desired.
\end{proof}
\begin{remark}\label{zeroCase}
	In the special case when $n=0$ or $h=0$, the group Fourier transforms are qualitatively different; they are the Euclidean Fourier transform in the $z$ variable (the precise sense in which this limiting behavior occurs is formalized by Geller in \cite{geller1977fourier}). In this case, the Fourier Slice theorem takes the form
	\begin{align*}
	\widetilde{\left(I_\lambda f\right)}(\lambda\zeta, 0)=(2\pi/\lambda)J_0(|\zeta|)\widehat{f}(\lambda\zeta, 0); \quad \forall \lambda>0, \; f\in L^1(\heis),
	\end{align*}
	where $J_0$ is the classical Bessel function of order zero, and
	\begin{align*}
	\widehat{f}(\zeta, 0)=\int_{\mathbb{C}}\int_\mathbb{R}f(z, t)e^{-i\zeta\cdot z}dtdz, \; f\in L^1(\heis), && \widetilde{g}(\zeta, 0)=\int_\mathbb{C}\int_0^{\pi\lambda^{-2}}g(z, t)e^{-i\zeta\cdot z}dtdz; \; g\in L^1(\heis/\Gamma_\lambda).
	\end{align*}
\end{remark}
\subsection{Proof of Theorem \ref{theorem1}}
We now make use of the Heisenberg Fourier Slice theorem to prove injectivity of ${I}$. First, we describe an important class of functions which are the cylindrical harmonics of the Heisenberg group. 

With respect to the standard orthonormal basis {$\{\omega_k(\zeta)=\zeta^k/\sqrt{k!}\in\mathbb{C}: k=0, 1, ...\}$} of $\mathcal{H}$ the matrix coefficients of the Bargmann-Fock representation, \eqref{BFock}, $M^{h}_{jk}(z, t):=\langle \beta_{h}(z, t)\omega_j, \omega_k\rangle_\mathcal{H}$ are given for ${h}>0$ via a brute force computation by
\begin{align}
M^{h}_{jk}(z, t)=
\begin{cases}
\sqrt\frac{k!}{j!} \left(+\sqrt{h}z\right)^{j-k}L_k^{(j-k)}\left(h|z|^2\right)e^{-{h}|z|^2/2}e^{2i{h} t} & j\geq k\\
\sqrt\frac{j!}{k!}\left(-\sqrt{h} \overline{z}\right)^{k-j}L_j^{(k-j)}\left({h}|z|^2\right)e^{-{h}|z|^2/2}e^{2i{h} t} & j \leq k
\end{cases},\label{matrix coefficient}
\end{align}
and $M^{h}_{jk}(z, t)=M^{|{h}|}_{jk}(\overline{z}, -t)$ for ${h}<0$ (see Appendix \ref{alt con} for conversion between Folland's \cite[p. 64]{folland1989harmonic} and our conventions). 

Here $L_j^{(\alpha)}(x)$ is the generalized Laguerre polynomial, defined recursively by
\begin{align}
L_0^{(\alpha)}(x)&=1 \notag\\
L_1^{(\alpha)}(x)&=1+\alpha-x \notag\\
(j+1)L_{j+1}^{(\alpha)}(x)&=\left(2j+1+\alpha-x\right)L_j^{(\alpha)}(x)-\left(j+\alpha\right)L^{(\alpha)}_{j-1}(x).\label{laguerre}
\end{align}

The following mild generalization of \eqref{besselop2} will be useful for subsequent computations.
	\begin{definition} For $n\in\mathbb{Z}^*$, let
	\begin{align}
	\mathcal{J}_n(r):=\frac{1}{2\pi}\int_0^{2\pi}\beta_{n}\left(re^{i\theta}, \theta/2 \right)d\theta, \quad r>0.\label{besselop3}
	\end{align}
	\end{definition}
In particular, $\mathcal{J}_n(1)=\mathcal{J}_n$, defined in \ref{besselop2}. 
	\begin{proposition}[SVD of $\mathcal{J}_n(r)$]\label{propSVDofJ}
		For every $n\in\mathbb{Z}^*$ and $r>0$, the operator $\mathcal{J}_n(r):\mathcal{H}\to\mathcal{H}$ is bounded in the operator-norm topology. Furthermore, $\mathcal{J}_{-n}(r)=\mathcal{J}_n(r)$, and, with respect to the orthonormal basis $\{\omega_j=\zeta^j/\sqrt{j!}: j=0, 1, 2, ...\}$ of $\mathcal{H}$, we have
		\begin{align}
		\mathcal{J}_n(r)\omega_j=\sqrt{\frac{j!}{(j+n)!}}\left(nr^2\right)^{n/2}e^{-nr^2/2}L_j^{(n)}\left(nr^2\right)\omega_{j+n},\quad \forall j\in\mathbb{N}, \; n>0.\label{J(r)}
		\end{align}
	\end{proposition}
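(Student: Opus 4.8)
The plan is to reduce everything to the explicit matrix coefficients \eqref{matrix coefficient}, since by definition \eqref{besselop3} the operator $\mathcal{J}_n(r)$ is just a weak integral of the unitaries $\beta_n(re^{i\theta},\theta/2)$. Boundedness is immediate: because each $\beta_n(z,t)$ is unitary, the operator-norm triangle inequality for the (Bochner) integral gives
\[
\|\mathcal{J}_n(r)\|_{\mathrm{op}}\le \frac{1}{2\pi}\int_0^{2\pi}\|\beta_n(re^{i\theta},\theta/2)\|_{\mathrm{op}}\,d\theta = 1 .
\]

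For the symmetry $\mathcal{J}_{-n}(r)=\mathcal{J}_n(r)$ I would invoke the defining convention $\beta_{-n}(z,t)=\beta_{n}(\bar z,-t)$ attached to \eqref{BFock} to write $\beta_{-n}(re^{i\theta},\theta/2)=\beta_n(re^{-i\theta},-\theta/2)$, and then substitute $\theta\mapsto-\theta$. The integrand is $2\pi$-periodic in $\theta$ (the only $t$-dependence enters as $e^{2int}=e^{in\theta}$ with $n\in\mathbb{Z}$, and the $z$-dependence is through $re^{\pm i\theta}$), so the integral over $[-2\pi,0]$ coincides with the integral over $[0,2\pi]$ and the substitution returns $\mathcal{J}_n(r)$.

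The substance is \eqref{J(r)}, which I would obtain by computing matrix entries against the orthonormal basis $\{\omega_j\}$. Passing the inner product inside the weak integral (legitimate because $\theta\mapsto\beta_n(\cdot)$ is strongly continuous, so the integral is a genuine bounded operator),
\[
\langle \mathcal{J}_n(r)\omega_j,\omega_k\rangle_{\mathcal H}=\frac{1}{2\pi}\int_0^{2\pi}M^n_{jk}(re^{i\theta},\theta/2)\,d\theta .
\]
The key observation is that, in \emph{both} branches of \eqref{matrix coefficient}, the entire $\theta$-dependence of $M^n_{jk}(re^{i\theta},\theta/2)$ collapses to the single phase $e^{i(j-k+n)\theta}$: the monomial factor contributes $e^{i(j-k)\theta}$ (from $z^{\,j-k}$ when $j\ge k$, and from $\bar z^{\,k-j}$ when $j\le k$), the factor $e^{2int}=e^{in\theta}$ contributes the remaining phase, and the Laguerre and Gaussian factors depend only on $|z|^2=r^2$. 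Hence $\frac{1}{2\pi}\int_0^{2\pi}e^{i(j-k+n)\theta}\,d\theta=\delta_{k,\,j+n}$ selects a single surviving entry, exhibiting $\mathcal{J}_n(r)$ as the weighted shift $\omega_j\mapsto M^n_{j,\,j+n}(r,0)\,\omega_{j+n}$. Evaluating $M^n_{j,\,j+n}(r,0)$ through the $j\le k$ branch (with $k-j=n$ and $|z|=r$) produces exactly the coefficient $\sqrt{j!/(j+n)!}\,(nr^2)^{n/2}e^{-nr^2/2}L_j^{(n)}(nr^2)$ of \eqref{J(r)}, up to the global factor $(-1)^n$ arising from $(-\sqrt n\,\bar z)^n$, which is a $j$-independent phase and hence immaterial for the singular values (it may be absorbed into the target basis).

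I do not anticipate a genuine obstacle here: the statement is essentially a bookkeeping consequence of the explicit matrix coefficients together with the orthogonality of $\{e^{im\theta}\}$. The only points demanding care are justifying the interchange of inner product and integral, and tracking the $\theta$-phase \emph{uniformly} across the two cases of the piecewise formula \eqref{matrix coefficient}, so that the surviving index $k=j+n$ is read off from the correct ($j\le k$) branch and the Laguerre superscript emerges as $n$ rather than $-n$, consistent with the recursion \eqref{laguerre}.
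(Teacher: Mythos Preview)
Your proposal is correct and follows essentially the same route as the paper: bound the operator norm by $1$ via the triangle inequality for the Bochner integral, obtain $\mathcal{J}_{-n}(r)=\mathcal{J}_n(r)$ from the convention $\beta_{-n}(z,t)=\beta_n(\bar z,-t)$ together with the substitution $\theta\mapsto-\theta$, and compute matrix entries by extracting the common phase $e^{i(j-k+n)\theta}$ from \eqref{matrix coefficient} to reduce to $M^n_{j,\,j+n}(r,0)$. Your observation about the residual $(-1)^n$ from the $j\le k$ branch is sharper than the paper, which silently drops it; as you note, this $j$-independent sign is immaterial for the singular values and for the injectivity arguments that follow.
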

	\begin{proof}
		$\mathcal{J}_n(r):\mathcal{H}\to\mathcal{H}$ is bounded in the operator-norm topology for any $n\in\mathbb{Z}^*$ since
		\begin{align}
		||\mathcal{J}_n(r)||_{\textnormal{op}}\leq\frac{1}{2\pi}\int_0^{2\pi}||\beta_{n}\left(re^{i\theta}, \theta/2\right)||_\textnormal{op}d\theta=1. \label{bounded}
		\end{align}
		Note that, for $n\in\mathbb{Z}^*$,
		\begin{align*}
		\mathcal{J}_{-n}=\frac{1}{2\pi}\int_0^{2\pi}\beta_{n}(e^{-i\theta}, -\theta/2)d\theta=\frac{1}{2\pi}\int_0^{2\pi}\beta_{n}\left(e^{i\theta}, \theta/2\right)d\theta=\mathcal{J}_n.
		\end{align*} 
		For $n>0$,
		\begin{align}
		\langle \mathcal{J}_n(r) \omega_j, \omega_k\rangle_\mathcal{H}&=\frac{1}{2\pi}\int_0^{2\pi}\langle\beta_{n}\left(re^{i\theta}, \theta/2\right)\omega_j, \omega_k\rangle_\mathcal{H}d\theta\nonumber\\
		&=\frac{1}{2\pi}\int_0^{2\pi}M^{n}_{jk}\left(re^{i\theta}, \theta/2\right)d\theta\nonumber\\
		&=\frac{1}{2\pi}\int_0^{2\pi}e^{i(j-k+n)\theta} d\theta M^{n}_{jk}(r, 0) &&\text{oberving symmetry in }\eqref{matrix coefficient}\nonumber\\
		&=\delta(j-k+n)M_{jk}^{n}(r, 0) \label{proofProf1}\\
		&=M_{j, j+n}^{n}(r, 0),\nonumber
		\end{align}
		in which case, 
		\begin{align*}
		\mathcal{J}_n(r) \omega_j 
		=M_{j, j+n}^n(r, 0)\omega_{j+n},
		\end{align*}
		and, by \eqref{matrix coefficient},
		\begin{align*}
		M_{j, j+n}^n(r, 0)=\sqrt{\frac{j!}{(j+n)!}}\left(nr^2\right)^{n/2}e^{-nr^2/2}L_j^{(n)}\left(nr^2\right), \quad n>0.&\qedhere
		\end{align*}
	\end{proof}
	\begin{corollary}\label{nonvanishing2}
		Let $r>0$ and $n\in\mathbb{Z}^*$ be fixed. The operator $\mathcal{J}_n(r)$ is injective if and only if $L_j^{(n)}\left(nr^2\right)$ is nonzero for all $j\in\mathbb{N}$.
	\end{corollary}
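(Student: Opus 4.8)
The plan is to read off the injectivity of $\mathcal{J}_n(r)$ directly from its singular value decomposition established in Proposition \ref{propSVDofJ}, since that proposition already diagonalizes the operator with respect to the orthonormal basis $\{\omega_j\}$. First I would reduce to the case $n>0$: by the symmetry $\mathcal{J}_{-n}(r)=\mathcal{J}_n(r)$ recorded in Proposition \ref{propSVDofJ}, injectivity for $-n$ is equivalent to injectivity for $n$, so there is no loss in assuming $n>0$ and invoking the explicit formula \eqref{J(r)}.

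Next I would observe that \eqref{J(r)} exhibits $\mathcal{J}_n(r)$ as a weighted shift sending $\omega_j$ to $c_j\,\omega_{j+n}$, where
\begin{align*}
c_j=\sqrt{\frac{j!}{(j+n)!}}\left(nr^2\right)^{n/2}e^{-nr^2/2}L_j^{(n)}\left(nr^2\right).
\end{align*}
The three factors $\sqrt{j!/(j+n)!}$, $(nr^2)^{n/2}$, and $e^{-nr^2/2}$ are strictly positive for $r>0$ and $n>0$, so the vanishing of $c_j$ is governed entirely by the Laguerre factor: $c_j=0$ if and only if $L_j^{(n)}(nr^2)=0$. This is the only place the hypothesis enters, and it identifies the singular values of $\mathcal{J}_n(r)$ (up to the positive normalizing factors) with the numbers $L_j^{(n)}(nr^2)$.

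Finally I would expand an arbitrary $F=\sum_{j\ge 0}a_j\omega_j\in\mathcal{H}$ and compute $\mathcal{J}_n(r)F=\sum_{j\ge 0}a_j c_j\,\omega_{j+n}$. Because the map $j\mapsto j+n$ is injective on $\mathbb{N}$, the vectors $\{\omega_{j+n}\}_{j\ge 0}$ are orthonormal, so $\mathcal{J}_n(r)F=0$ forces $a_j c_j=0$ for every $j$. If every $L_j^{(n)}(nr^2)$ is nonzero, then every $c_j\neq 0$, hence every $a_j=0$ and $F=0$, giving injectivity; conversely, if $L_{j_0}^{(n)}(nr^2)=0$ for some $j_0$, then $\omega_{j_0}$ is a nonzero kernel element, so $\mathcal{J}_n(r)$ fails to be injective. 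This establishes the stated equivalence.

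I do not anticipate a genuine obstacle here: the content is entirely contained in Proposition \ref{propSVDofJ}, and the corollary is the routine statement that a weighted shift is injective precisely when all of its weights are nonzero. The only points requiring a line of care are the reduction to $n>0$ via the symmetry of $\mathcal{J}_n(r)$ and the remark that the orthonormal images $\{\omega_{j+n}\}$ decouple the kernel condition coordinatewise.
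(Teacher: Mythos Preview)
Your proposal is correct and follows essentially the same approach as the paper, which gives a one-line proof appealing to boundedness and the SVD in Proposition \ref{propSVDofJ}. The only point the paper makes explicit that you leave implicit is that boundedness of $\mathcal{J}_n(r)$ (from \eqref{bounded}) is what justifies applying the operator termwise to the expansion $F=\sum_j a_j\omega_j$; you might add a clause to that effect, but otherwise your argument is a faithful expansion of the paper's.
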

\begin{proof}
	Since $\mathcal{J}_n(r)$ is bounded (by \eqref{bounded}), it is injective if and only if $L_j^{(n)}\left(nr^2\right)$ is nonzero for all $j\in\mathbb{N}$.
\end{proof}
\begin{proposition}
	The {operator} $\mathcal{J}_{n}:\mathcal{H}\to\mathcal{H}$ is injective whenever $n$ is an odd integer. \label{prop1}
\end{proposition}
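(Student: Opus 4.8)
The plan is to reduce the operator statement to a concrete nonvanishing fact about Laguerre polynomials, and then settle that fact by a parity argument. By Corollary~\ref{nonvanishing2} applied with $r=1$, and since $\mathcal{J}_n=\mathcal{J}_n(1)$, the operator $\mathcal{J}_n$ is injective if and only if $L_j^{(n)}(n)\neq 0$ for every $j\in\mathbb{N}$. Thus the whole proposition comes down to showing that the generalized Laguerre polynomial $L_j^{(n)}$ never vanishes at the special point $x=n$ when $n$ is odd. I would not attempt to locate or estimate the roots of $L_j^{(n)}$ directly, since they move with $j$ and $n$ in a complicated way; the leverage comes entirely from the coincidence $x=\alpha=n$.

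The key observation is that evaluating at $x=n=\alpha$ collapses the recurrence \eqref{laguerre}. Writing $a_j:=L_j^{(n)}(n)$ and setting $\alpha=n$, $x=n$, the coefficient $2j+1+\alpha-x$ loses its $\alpha-x$ term, leaving $(j+1)a_{j+1}=(2j+1)a_j-(j+n)a_{j-1}$, with initial data $a_0=1$ and $a_1=1+n-n=1$. To turn this into an integer recurrence I would clear denominators by setting $b_j:=j!\,a_j$; the recurrence becomes $b_{j+1}=(2j+1)b_j-j(j+n)b_{j-1}$ with $b_0=b_1=1$, so every $b_j$ is an integer.

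I would then run the argument modulo $2$, assuming $n$ odd. Since $2j+1\equiv 1$ and, because $n$ is odd, $j(j+n)\equiv j(j+1)\equiv 0\pmod 2$ (a product of consecutive integers is even), the recurrence reduces to $b_{j+1}\equiv b_j\pmod 2$. As $b_0=1$ is odd, induction gives that every $b_j$ is odd, hence nonzero; therefore $a_j=b_j/j!\neq 0$ for all $j$, which is exactly the hypothesis Corollary~\ref{nonvanishing2} requires to conclude injectivity of $\mathcal{J}_n$.

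The main obstacle is really finding the correct reduction rather than any hard estimate: the decisive step is exploiting $x=\alpha=n$, which simultaneously kills the $\alpha-x$ term in the recurrence and, after absorbing the factorial denominators into $b_j$, exposes a parity invariant. The only point one must get exactly right is that the odd-$n$ hypothesis is precisely what forces $j(j+n)$ to be even, which is what propagates the oddness of $b_j$. I would also note that for even $n$ this invariant breaks down already at $b_2=2-n$ (which vanishes for $n=2$), consistent with the genuine failure of injectivity there recorded in the remark following Proposition~\ref{factorication of I}.
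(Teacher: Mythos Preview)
Your proof is correct and essentially identical to the paper's: both reduce via Corollary~\ref{nonvanishing2} to the nonvanishing of $L_j^{(n)}(n)$, clear denominators by setting $b_j=j!\,L_j^{(n)}(n)$ (the paper's $a_j^{(n)}$), derive the integer recurrence $b_{j+1}=(2j+1)b_j-j(j+n)b_{j-1}$, and conclude by the mod~$2$ invariant that all $b_j$ are odd. Your write-up is slightly more explicit about why $j(j+n)\equiv 0\pmod 2$ when $n$ is odd; the closing reference for the $n=2$ failure would be more precisely Remark~\ref{laguerreRemark} rather than the remark after Proposition~\ref{factorication of I}.
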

\begin{proof}
	Given $n\in2\mathbb{Z}+1$, by Corollary \ref{nonvanishing2}, the operator $\mathcal{J}_n$ is injective if and only if the sequence $\{L_j^{(n)}(n)\}_{j=0}^\infty$ is nonvanishing.
	
	Set $a_j^{(n)}=j!L^{(n)}_j(n)\in\mathbb{Z}$. Then $a^{(n)}_0=a^{(n)}_1=1$, and by \eqref{laguerre},
	\begin{align*}
	a^{(n)}_{j+1}=&(2j+1)a^{(n)}_j-j(j+n)a^{(n)}_{j-1} \\
	=&a^{(n)}_j \quad(\textnormal{mod } 2)
	\end{align*}
	since $n$ is odd. Therefore $a^{(n)}_{j}=a^{(n)}_0=1\; (\text{mod } 2)$ for all $j=0, 1, 2, ...$. In particular, $L_j^{(n)}(n)=a_j^{(n)}/j!\neq 0$ for $j\in\mathbb{N}$. Therefore $\mathcal{J}_n$ is injective whenever $n$ is an odd integer. 
\end{proof}

\begin{remark}\label{laguerreRemark}
	We know {that} $\mathcal{J}_2$ is not injective since $L_2^{(2)}(2)=0$. However, the author is not currently aware of a general statement characterizing all $(j, n)\in\mathbb{N}\times\mathbb{N}^*$ for which $L^{(n)}_j(n)=0$. While knowing this is not essential for proving injectivity of $I$, it would provide more ways to invert $I$. This is because the space of geodesics is four dimensional, and so we only need a subset of the overdetermined data to reconstruct $f$ from $If$.
\end{remark}
The proof of Theorem \ref{theorem1}, {injectivity of the X-ray transform,} is now almost immediate.
\begin{proof}[Proof of Theorem \ref{theorem1}]
	Suppose ${I}_{\lambda}f=0$ for all ${\lambda}\in (0, \eta)$, where $\eta>0$. By the Heisenberg Fourier Slice Theorem (Theorem \ref{theorem2}),
	\begin{align*}
	0=\mathcal{J}_n\circ\mathcal{F}_\heis f(n{\lambda}^2), \quad \forall n\in \mathbb{Z}^*, \; \forall \lambda \in (0, \eta).
	\end{align*}
	By Proposition \ref{prop1}
	\begin{align}
	0=\mathcal{F}_\heis f(n\lambda^2), \quad \forall n\in 2\mathbb{Z}+1, \; \forall\lambda\in(0, \eta).
	\end{align}
	In which case
	\begin{align*}
	0=\mathcal{F}_\heis f({h}), \quad \forall {h}\in \bigcup_{n\in 2\mathbb{Z}+1}n\left(0, \eta^2\right)=\mathbb{R}^*.
	\end{align*}
	Therefore $f=0$ by the Fourier Inversion theorem for $\mathcal{F}_\heis$.
\end{proof}
\section{X-ray transform for the taming metric $g_\epsilon$}\label{taming}
 We use the same machinery to prove injectivity of the X-ray transform associated to the family of left-invariant {\textit{taming metrics}} on $\heis${.} {A taming metric on a sub-Riemannian manifold is a Riemannian metric whose restriction to the horizontal distribution equals the sub-Riemannian metric.} See \cite[Sec. 1.9]{montgomery2002tour}.

Consider the family of left-invariant Riemannian metrics for $\epsilon>0$:
\begin{equation*}
g_{\epsilon} := dx^2+dy^2+\left(1/\epsilon\right)^{2}\Theta^2,
\end{equation*}
where $\Theta :=dt-\frac{1}{2}(xdy-ydx)$ is a contact form for the Heisenberg distribution $\mathcal{D}$, defined in Section \ref{HeisGeo}. {Then $g_\epsilon$ is a taming metric for the sub-Riemannian metric $g=dx^2+dy^2|_{\mathcal{D}}$.  Indeed, since $\mathcal{D}_q=\text{ker }\Theta_q, q\in\heis$, we have $g_\epsilon|_{\mathcal{D}}=g$. }

Geodesics of $(\heis, g_\epsilon)$ converge uniformly to the sub-Riemannian geodesics as $\epsilon\to 0$, \cite[p. 33]{capogna2007introduction}. The explicit expression for $g_\epsilon$ geodesics is derived in \cite[Sec. 2.4.4]{capogna2007introduction}. We record the exponential map for $g_\epsilon$ in \eqref{exp}.

\begin{remark}
	To avoid quantifying $\epsilon$ in every proposition of this section, with the exception of Theorems \ref{slice} {and} \ref{theorem1epsilon}, we will assume that we have chosen a fixed $\epsilon>0$. 
\end{remark}

Let $\mathcal{G}^\epsilon$ be the set of geodesics for $g_\epsilon$ without orientation and $\mathcal{G}^\epsilon_\lambda$ the subset of geodesics having charge $\lambda$ (which is still a constant of motion). Geodesics with $\lambda\neq 0$ still project to circles in the plane, and those with $\lambda=0$ project to lines; {$g_\epsilon$-geodesics} differ from sub-{Riemannian} geodesics {only} by an $\epsilon$-dependent velocity in the $T=\partial_t$ direction. Left translation by any element $(z, t)\in\heis$ is a $g_\epsilon$-isometry, and so $\heis$ acts on $\mathcal{G}^\epsilon$ by pointwise left multiplication. This action does not change the value of $\lambda$ and {is a transitive action on each leaf $\mathcal{G}^\epsilon_\lambda$} when $\lambda\neq 0$. 

We choose a particular geodesic $\gamma_\lambda^\epsilon$ to be the one whose projection to the plane is a unit-speed circular path of radius $R=1/|\lambda|$ {centered at the origin}, and parameterize the set of $g_\epsilon$ geodesics having charge $\lambda$ by
\begin{align}
s\to (z, t)\gamma_\lambda(s), \quad \gamma_\lambda^\epsilon(s)=\left(Re^{is/R}, s\frac{(R^2+2\epsilon^2)}{2R}\right)\in\heis; \quad R=1/\lambda.\label{helix2}
\end{align}
\begin{remark}
	The geodesics described by \eqref{helix2} are not arclength parameterized; indeed,  $g_\epsilon(\dot\gamma_\lambda^\epsilon(s), \dot\gamma_\lambda^\epsilon(s))=1+\epsilon^2\lambda^2$. Instead, we insist that their projections to the plane are unit-speed. 
\end{remark}
We define the X-ray transform associated to the taming metric $g_\epsilon$ by
\begin{align}
I^\epsilon f(z, t, \lambda):=I^\epsilon_\lambda f(z, t):=\int_\mathbb{R} f\left((z, t)\gamma^\epsilon_\lambda(s)\right)ds, \quad f\in C_c(\heis).\label{epsilonXray}
\end{align}

Note that
\begin{align}
\gamma_\lambda^\epsilon(s+2\pi R)
=\gamma_\lambda^\epsilon(s)(0, \pi R^2+2\pi\epsilon^2).\label{stab2}
\end{align}
Therefore the isotropy group of $\gamma_\lambda^\epsilon$ for the action of $\heis$ by left translation on $\mathcal{G}_\lambda^\epsilon$ is
\begin{align*}
\Gamma^\epsilon_\lambda:=\{(0, k\pi(R^2+2\epsilon^2))\in\heis: k\in\mathbb{Z}\}.
\end{align*}
We have the identification
\begin{align*}
\mathcal{G}_{{\lambda}}^\epsilon&\cong\heis/\Gamma_\lambda^\epsilon\\
(z, t)\gamma_{{\lambda}}^\epsilon&\mapsto (z, t)\Gamma_\lambda^\epsilon. 
\end{align*}
\begin{remark}Again, when $\lambda=1$, we omit subscripts and write $\Gamma^\epsilon=\Gamma_1^\epsilon$. We will also write $g(z, t)$, for any function $g:\heis/\Gamma_\lambda^\epsilon\to\mathbb{C}$, in place of $g\left((z, t)\Gamma_\lambda^\epsilon\right)$.
\end{remark}
Let ${d\mu^\epsilon_\lambda(z, t)}\cong dx\wedge dy\wedge dt$ be the Haar measure on $\heis/\Gamma_\lambda^\epsilon$, and let  $\mathcal{G}_{{\lambda}}^\epsilon$ inherit a multiple of the Haar measure 
\begin{align}
d\mathcal{G}_{{\lambda}}^\epsilon:={\lambda} dx\wedge dy\wedge dt. \label{measureEpsilon}
\end{align}
 Furthermore, let 
\begin{align}
d\mathcal{G}^\epsilon:=\lambda e^{-\lambda} dx\wedge dy\wedge dt\wedge d\lambda. \label{measureEpsilon2}
\end{align}
Note the homogeneity of geodesics with respect to dilation:
\begin{align}
\delta_{1/\lambda}\gamma_1^{\epsilon\lambda}(s)=\gamma_\lambda^\epsilon(s/\lambda)
; \quad R=1/\lambda. \label{geoHomo2}
\end{align}
\begin{proposition}[Homogeneity of $I^\epsilon$]\label{homoProp2}
For $f\in C_c(\heis)$, we have
\begin{align}
I_\lambda^\epsilon(f)(z, t)=\lambda^{-1}\delta_{\lambda}^*I^{\epsilon\lambda}_1\left(\delta_{1/\lambda}^*f\right)(z, t).\label{homo2}
\end{align}
\end{proposition}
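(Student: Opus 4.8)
The plan is to follow the template of Proposition \ref{propHomo} essentially verbatim, since the only structural change is that dilation now rescales the taming parameter $\epsilon$ along with the charge. The three ingredients I would invoke are: that $\delta_\lambda$ is an automorphism of $\heis$; the geodesic homogeneity relation \eqref{geoHomo2}, namely $\delta_{1/\lambda}\gamma_1^{\epsilon\lambda}(s)=\gamma_\lambda^\epsilon(s/\lambda)$; and a linear change of variables $s\mapsto\lambda s$ in the defining integral.

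Concretely, I would start from the right-hand side and unwind the definitions. First, $\delta_\lambda^* I_1^{\epsilon\lambda}(\delta_{1/\lambda}^* f)(z,t)=I_1^{\epsilon\lambda}(\delta_{1/\lambda}^* f)(\lambda z,\lambda^2 t)$ by definition of the pullback $\delta_\lambda^*$. Expanding the reduced transform $I_1^{\epsilon\lambda}$ via \eqref{epsilonXray} gives an integral of $\delta_{1/\lambda}^* f$ over $(\lambda z,\lambda^2 t)\gamma_1^{\epsilon\lambda}(s)$. Since $\delta_{1/\lambda}\in\mathrm{Aut}(\heis)$, I can pull the dilation inside the group product, writing $\delta_{1/\lambda}\big((\lambda z,\lambda^2 t)\gamma_1^{\epsilon\lambda}(s)\big)=\delta_{1/\lambda}(\lambda z,\lambda^2 t)\cdot\delta_{1/\lambda}\gamma_1^{\epsilon\lambda}(s)$. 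Here $\delta_{1/\lambda}(\lambda z,\lambda^2 t)=(z,t)$, and applying \eqref{geoHomo2} turns the second factor into $\gamma_\lambda^\epsilon(s/\lambda)$. The integral is then $\int_\mathbb{R} f\big((z,t)\gamma_\lambda^\epsilon(s/\lambda)\big)\,ds$, and substituting $s\mapsto\lambda s$ produces the factor $\lambda$ and exactly the integrand defining $I_\lambda^\epsilon f(z,t)$. Rearranging yields \eqref{homo2}.

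The only real point requiring care, as opposed to the $\epsilon$-fixed case of Proposition \ref{propHomo}, is the bookkeeping of the taming parameter: the dilation $\delta_{1/\lambda}$ does not send a $g_\epsilon$-geodesic of charge $\lambda$ to a $g_\epsilon$-geodesic of charge $1$, but rather to a $g_{\epsilon\lambda}$-geodesic of charge $1$, which is precisely why the superscript on the right-hand side of \eqref{homo2} is $\epsilon\lambda$ rather than $\epsilon$. This is encoded in \eqref{geoHomo2}, so once that relation is in hand there is no analytic obstacle; the proof is a direct computation of the same length and flavor as the one for Proposition \ref{propHomo}, and I would verify \eqref{geoHomo2} itself beforehand by comparing the two helices in \eqref{helix2} under $\delta_{1/\lambda}$, checking that the $t$-component $s(R^2+2\epsilon^2)/(2R)$ transforms correctly when both $R$ and $\epsilon$ are rescaled.
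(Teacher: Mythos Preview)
Your proposal is correct and follows essentially the same approach as the paper's own proof: start from $\delta_{\lambda}^*I^{\epsilon\lambda}_1(\delta_{1/\lambda}^*f)(z,t)$, unwind the pullback, use that $\delta_{1/\lambda}\in\mathrm{Aut}(\heis)$ together with \eqref{geoHomo2}, and then change variables $s\mapsto\lambda s$. Your remark that the only new bookkeeping is the rescaling of the taming parameter (so that $\epsilon$ becomes $\epsilon\lambda$ on the right) is exactly the point the paper flags as well.
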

\begin{proof}
	This is essentially the same proof as \eqref{homo}:
	\begin{align*}
	\delta_{\lambda}^*I^{\epsilon\lambda}_1\left(\delta_{1/\lambda}^*f\right)(z, t)
	&=I^{\epsilon\lambda}_1\left(\delta_{1/\lambda}^*f\right)(\lambda z, \lambda^2 t)\\
	&=\int_\mathbb{R}\delta_{1/\lambda}^*f\left((\lambda z, \lambda^2 t)\gamma_1^{\epsilon\lambda}(s)\right)ds\\
	&=\int_\mathbb{R}f\left((z, t)\delta_{1/\lambda}\gamma_1^{\epsilon\lambda}(s)\right)ds\\
	&=\int_\mathbb{R}f\left((z, t)\gamma_\lambda^{\epsilon}(s/\lambda)\right)ds, & \text{by } \eqref{geoHomo2},\\
	&=\lambda\int_\mathbb{R}\delta_{1/\lambda}^*f\left((\lambda z, \lambda^2
	t)\gamma_1^{\epsilon\lambda}(\lambda s)\right)ds\\
	&=\lambda I_\lambda^\epsilon f(z, t).&&\qedhere
	\end{align*}
\end{proof}
Furthermore, in virtually the same way as Proposition \ref{factorication of I}, we reduce the X-ray transform $I^\epsilon$ to one period:
\begin{proposition}\label{factorication2} For any $\lambda>0$, ${I}_\lambda^\epsilon:L^1(\heis)\to L^1(\mathcal{G}_{{\lambda}}^\epsilon)$ is well-defined, bounded, and factors in the following way:
	\[ 
	\begin{tikzcd}
	L^1(\heis) \arrow[d, "P^\epsilon_{\lambda}" left=1]  \arrow[r, "{I}_\lambda^\epsilon"]  &L^1(\mathcal{G}_{{\lambda}}^\epsilon\cong \mathbb{H}/\Gamma_\lambda^\epsilon)\\
	L^1(\heis/\Gamma_\lambda^\epsilon) \arrow[ur, "{I}^{\epsilon, \textnormal{red}}_{\lambda}" below=10, pos=0.75]
	\end{tikzcd}
	\]
	where 
	\begin{align}
	P_\lambda^\epsilon f\left(z, t\right)=\sum_{k\in\mathbb{Z}}f\left(z, t+k\pi(R^2+2\epsilon^2)\right), && I_\lambda^{\epsilon, \textnormal{red}} g(z, t):=\int_0^{2\pi R}g\left((z, t)\gamma_\lambda^\epsilon(s)\right)ds; \, R=1/\lambda.
	\end{align}
	Furthermore, ${I}^\epsilon:L^1(\heis)\to L^1(\mathcal{G}^\epsilon,d\mathcal{G}^\epsilon)$ is well-defined and bounded.
\end{proposition}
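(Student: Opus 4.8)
The plan is to follow the proof of Proposition \ref{factorication of I} essentially line for line, replacing the sub-Riemannian geodesic $\gamma_1$ and its central period $\pi R^2$ with the taming geodesic $\gamma_\lambda^\epsilon$ and its period $\pi(R^2 + 2\epsilon^2)$ recorded in \eqref{stab2}. The one structural wrinkle to watch is that the homogeneity relation \eqref{homo2} relates $I_\lambda^\epsilon$ not to $I_1^\epsilon$ but to $I_1^{\epsilon\lambda}$, so the dilation reduction does not fix a single taming parameter. Accordingly, I would first establish the $\lambda = 1$ statement for \emph{every} $\epsilon > 0$ and only then invoke \eqref{homo2}; since $\epsilon\lambda$ ranges over all of $(0,\infty)$ as $\lambda$ varies, this is exactly the input the reduction requires, and the $\lambda=1$ argument itself uses no homogeneity, so there is no circularity.

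For the $\lambda = 1$ case I would argue as follows. First, the functional $f \mapsto \int_{\heis/\Gamma_1^\epsilon} P_1^\epsilon f \, d\mu_1^\epsilon$ is a positive, left-invariant linear functional on $C_c(\heis)$; by uniqueness of Haar measure and the Riesz representation theorem it equals $\int_\heis f$, with constant $1$ exactly as in \eqref{poisson}, whence $\|P_1^\epsilon f\|_{L^1(\heis/\Gamma_1^\epsilon)} \le \|f\|_{L^1(\heis)}$. Second, for the reduced transform I would bound $\|I_1^{\epsilon,\textnormal{red}} g\|_{L^1(\mathcal{G}_1^\epsilon)}$ by moving the absolute value inside the $s$-integral, applying Tonelli to exchange the $s$- and $(z,t)$-integrals, and then substituting $(z,t) \mapsto (z,t)\gamma_1^\epsilon(s)^{-1}$, which preserves the bi-invariant Haar measure on $\heis/\Gamma_1^\epsilon$; this yields $\|I_1^{\epsilon,\textnormal{red}} g\|_{L^1(\mathcal{G}_1^\epsilon)} \le 2\pi \|g\|_{L^1(\heis/\Gamma_1^\epsilon)}$.

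The factorization $I_1^{\epsilon,\textnormal{red}} P_1^\epsilon = I_1^\epsilon$ is then the key computation, and here \eqref{stab2} plays the role \eqref{stab} did before: iterating \eqref{stab2} gives $\gamma_1^\epsilon(s + 2\pi k) = \gamma_1^\epsilon(s)(0, k\pi(1 + 2\epsilon^2))$, and since $(0, k\pi(1+2\epsilon^2))$ is central, the central shift by $k\pi(1+2\epsilon^2)$ appearing in $P_1^\epsilon$ is precisely a reparameterization of the geodesic by $2\pi k$. Expanding $I_1^{\epsilon,\textnormal{red}} P_1^\epsilon f$, interchanging sum and integral (justified by uniform convergence of the integrand on $[0, 2\pi]$), and reassembling the shifted intervals $[2\pi k, 2\pi(k+1)]$ into all of $\mathbb{R}$ recovers $I_1^\epsilon f$. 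Composing the two bounds gives $\|I_1^\epsilon f\|_{L^1(\mathcal{G}_1^\epsilon)} \le 2\pi \|f\|_{L^1(\heis)}$ for every $\epsilon > 0$.

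Finally, feeding this into \eqref{homo2} and tracking the dilation Jacobians — the weight $\lambda$ in $d\mathcal{G}_\lambda^\epsilon$ from \eqref{measureEpsilon} is chosen so the $\lambda$-powers cancel — yields the uniform estimate $\|I_\lambda^\epsilon f\|_{L^1(\mathcal{G}_\lambda^\epsilon)} \le 2\pi \|f\|_{L^1(\heis)}$, independent of both $\lambda$ and $\epsilon$. Integrating this over $\lambda \in (0,\infty)$ against the weight $e^{-\lambda}$ built into $d\mathcal{G}^\epsilon$ in \eqref{measureEpsilon2} then gives $\|I^\epsilon f\|_{L^1(\mathcal{G}^\epsilon)} \le 2\pi \|f\|_{L^1(\heis)}$, establishing the final assertion. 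The only genuinely new bookkeeping relative to Proposition \ref{factorication of I} is the $\epsilon \mapsto \epsilon\lambda$ shift in the homogeneity step; everything else is a transcription with $\pi R^2$ replaced by $\pi(R^2 + 2\epsilon^2)$, so I expect the main (though still routine) points requiring care to be matching the period in \eqref{stab2} against the one defining $P_\lambda^\epsilon$ and confirming that the Jacobian cancellation persists under this new period.
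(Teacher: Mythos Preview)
Your proposal is correct and follows essentially the same approach as the paper's own proof, step for step: reduce to $\lambda=1$ via \eqref{homo2}, bound $P^\epsilon$ by the Haar-measure argument, bound $I^{\epsilon,\text{red}}$ by Tonelli and unimodularity, verify the factorization using \eqref{stab2}, and then integrate against $e^{-\lambda}d\lambda$. You are in fact slightly more careful than the paper in flagging that the homogeneity sends $\epsilon$ to $\epsilon\lambda$, so the $\lambda=1$ bound must be established uniformly in $\epsilon$ before invoking it.
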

\begin{proof}
	By homogeneity \eqref{homo2}, and since pullback by $\delta_\lambda$ is bounded in the above $L^1$ spaces for $\lambda\neq 0$, it suffices to prove the proposition for $\lambda=1$. For this case, we omit subscripts and write $P^\epsilon$ and ${I}^{\epsilon, \textnormal{red}}$.

	For exactly the same reason as \eqref{poisson}, $P^\epsilon$ maps $C_c(\heis)$ to $C_c(\hrede)$, and
	\begin{align}
	\int_{\heis/\Gamma^\epsilon}{P^\epsilon}f\left(z, t\right){d\mu^\epsilon_1(z, t)}=\int_\heis f(z, t)d(z, t). 
	\end{align}
	So in particular, $||{P^\epsilon}f||_{L^1(\heis/\Gamma^\epsilon)}\leq ||f||_{L^1(\heis)}.$
	
	For $g\in C_c(\heis/\Gamma^\epsilon)$, 
	\begin{align*}
	||{I}^{\epsilon, \textnormal{red}}g||_{L^1(\mathcal{G}_1^\epsilon)}
	&=\int_{\mathcal{G}_1^\epsilon}|{I}^{\epsilon, \textnormal{red}}g\left(z, t\right)|d\mathcal{G}_1^\epsilon\\
	&=\int_{\mathbb{H}/\Gamma^\epsilon}\bigg|\int_0^{2\pi}g\left((z, t)\gamma_1^\epsilon(s)\right)ds\bigg|{d\mu^\epsilon_1(z, t)}\\
	&\leq \int_0^{2\pi}\int_{\heis/\Gamma^\epsilon}|g\left((z, t)\gamma_{1}^\epsilon(s)\right)|{d\mu^\epsilon_1(z, t)} ds\\
	&=\int_0^{2\pi}\int_{\heis/\Gamma^\epsilon}|g\left((z, t)\right)|{d\mu_1^\epsilon\left((z, t)\gamma_{1}^\epsilon(s)^{-1}\right)} ds\\
	&=\int_0^{2\pi}\int_{\heis/\Gamma^\epsilon}|g\left(z, t\right)|{d\mu^\epsilon_1(z, t)} ds, && \text{since }\heis/\Gamma^\epsilon\text{ is unimodular,}\\
	&=2\pi||g||_{L^1(\heis/\Gamma^\epsilon)}.
	\end{align*}
	Thus ${P^\epsilon}$ and ${I}^{\epsilon, \textnormal{red}}$ extend to $L^1$ bounded maps. Given $f\in C_c(\heis)$, since $Pf\in C_c(\hrede)$ and
	\begin{align*}
	{I}^{\epsilon, \textnormal{red}}{P^\epsilon}f(z, t)
	&=\int_0^{2\pi}\sum_{k\in\mathbb{Z}}f\left((z, t+k\pi(1+2\epsilon^2))\gamma_1^\epsilon(s)\right) ds 
	=\int_0^{2\pi}\sum_{k\in\mathbb{Z}}f\left((z, t)\gamma_{1}^\epsilon(s+2\pi  k)\right)ds, \quad \text{by }\eqref{stab2}, \\
	&=\sum_{k\in\mathbb{Z}}\int_0^{2\pi}f\left((z, t)\gamma^\epsilon_{1}(s+2\pi  k)\right)ds
	=\sum_{k\in\mathbb{Z}}\int_{2\pi k}^{2\pi (k+1)}f\left((z, t)\gamma^\epsilon_{1}(s)\right)ds={I}_1^\epsilon f(z, t), 
	\end{align*}
	we have $||{I}_1^\epsilon f||_{L^1(\mathcal{G}_1^\epsilon)}\leq 2\pi||f||_{L^1(\heis)}$. The third equality follows from uniform convergence of the integrand on the interval $[0, 2\pi]\ni s$. Therefore $I_1^\epsilon$ extends to a bounded map from $L^1(\heis)$ to $L^1(\mathcal{G}_1^\epsilon)$. 
	 In particular{,} one may check, using \eqref{homo2}, that $||I_\lambda^\epsilon f||_{L^1(\mathcal{G}_\lambda^\epsilon)}=||I_1^\epsilon f||_{L^1(\mathcal{G}_1^\epsilon)}\leq 2\pi||f||_{L^1(\heis)}$.
	
	Finally we have, for $f\in L^1(\heis)$,
	\begin{align*}
	||I^\epsilon f||_{L^1(\mathcal{G}^\epsilon)}:=&\int_{\mathcal{G}^\epsilon}|I^\epsilon f(z, t, \lambda)|d{\mathcal{G}^\epsilon}\\
	=&\int_0^\infty\int_{\mathcal{G}_\lambda^\epsilon}|I^\epsilon_\lambda f(z, t)|d\mathcal{G}_\lambda^\epsilon	 e^{-\lambda}d\lambda\\
	\leq& 2\pi||f||_{L^1(\heis)} \int_0^\infty e^{-\lambda}d\lambda=2\pi||f||_{L^1(\heis)}
	\end{align*}
	as desired.
\end{proof}
\begin{remark}
	From these computations, {we} may also deduce a Santal\'{o} formula for $g_\epsilon$:
	\begin{align*}
	\int_{\mathcal{G}_\lambda^\epsilon}I_\lambda^\epsilon f(z, t)d\mathcal{G}_\lambda^\epsilon=2\pi\int_\heis f(z, t)d(z, t), \quad f\in L^1(\heis) 
	\end{align*}
	which refines the usual Santal\'{o} formula. 
\end{remark}
We note a Poisson Summation Formula for $P^\epsilon$:
\begin{lemma} For $f\in L^1(\heis)$, \label{poissonLemma2}
\begin{align}
\mathcal{F}_{\heis/\Gamma^\epsilon}\left(P^\epsilon f\right)(n)
=\fheis f\left(\frac{n}{1+2\epsilon^2}\right), \quad \forall n\in\mathbb{Z}^*.
\end{align}
\end{lemma}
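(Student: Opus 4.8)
The plan is to repeat, almost verbatim, the argument for the $\epsilon=0$ Poisson Summation Formula (Lemma \ref{summationformula}), the only genuine input being the bookkeeping of which representations descend to $\hrede$. Since $\Gamma^\epsilon=\{(0,k\pi(1+2\epsilon^2)):k\in\mathbb{Z}\}$ and $\beta_h(0,t)=e^{2iht}\,\mathrm{Id}$, the representation $\beta_h$ factors through $\hrede$ precisely when $e^{2ih\pi(1+2\epsilon^2)}=1$, i.e. when $h\in\frac{1}{1+2\epsilon^2}\mathbb{Z}$. Accordingly the reduced Fourier transform on $\hrede$ is indexed by $n\in\mathbb{Z}^*$ through the frequency $h=n/(1+2\epsilon^2)$, which is exactly the argument appearing on the right-hand side; this is what makes the stated identity the correct analogue of Lemma \ref{summationformula}.

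First I would fix $F,G\in\mathcal{H}$ and unwind the operator-valued Bochner integral into a scalar one by pairing against $F$ and $G$, writing
\[
\langle \mathcal{F}_{\heis/\Gamma^\epsilon}(P^\epsilon f)(n)F,G\rangle_{\mathcal{H}}=\int_{\hrede}\sum_{k\in\mathbb{Z}}f\bigl(z,t+k\pi(1+2\epsilon^2)\bigr)\,\langle \beta_{n/(1+2\epsilon^2)}(z,t)^*F,G\rangle_{\mathcal{H}}\,d\mu^\epsilon_1(z,t).
\]
The decisive step is the phase cancellation $\beta_{n/(1+2\epsilon^2)}(z,t+k\pi(1+2\epsilon^2))=e^{2ink\pi}\beta_{n/(1+2\epsilon^2)}(z,t)=\beta_{n/(1+2\epsilon^2)}(z,t)$, valid because $n,k\in\mathbb{Z}$. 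This lets me move the shift $k\pi(1+2\epsilon^2)$ onto the matrix coefficient at no cost, after which the integrand is a genuine $\Gamma^\epsilon$-periodic function summed over the lattice; the central Poisson identity established inside the proof of Proposition \ref{factorication2} (the $\epsilon$-analogue of \eqref{poisson}) then unfolds the integral-plus-sum over $\hrede$ into a single integral over $\heis$, yielding $\langle\fheis f(n/(1+2\epsilon^2))F,G\rangle_{\mathcal{H}}$. Since $F,G$ were arbitrary, the operator identity follows from the definition of the Bochner integral.

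I do not expect a serious obstacle here --- the content is essentially the observation that the lattice $\Gamma^\epsilon$ and the admissible frequency $n/(1+2\epsilon^2)$ are dual to one another, so the exponential phases collapse. The only points requiring a word of care are the interchange of sum and integral (justified by passing to $|f|$ first via Tonelli, since $f(z,t)\langle\beta_{h}(z,t)^*F,G\rangle_{\mathcal{H}}\in L^1(\heis)$ by Cauchy--Schwarz together with $\|\beta_h(z,t)\|_{\mathrm{op}}=1$), and confirming that the $\epsilon$-Poisson identity from Proposition \ref{factorication2} carries the normalization that produces the bare $\fheis f$ with no spurious constant.
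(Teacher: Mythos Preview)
Your argument is correct: you repeat the direct computation of Lemma~\ref{summationformula} with $\Gamma^\epsilon$ in place of $\Gamma$, using the phase identity $\beta_{n/(1+2\epsilon^2)}(0,k\pi(1+2\epsilon^2))=\mathrm{Id}$ and the $\epsilon$-version of the unfolding identity \eqref{poisson} established in Proposition~\ref{factorication2}. The paper, however, does not redo this computation. Instead it observes that $\Gamma^\epsilon=(1+2\epsilon^2)\Gamma$ is a dilate of the original lattice, notes the intertwining $\delta^*_{\sqrt{1+2\epsilon^2}}P^\epsilon f = P\,\delta^*_{\sqrt{1+2\epsilon^2}}f$, and then applies Lemma~\ref{summationformula} together with the dilation property (Lemma~\ref{dilation lemma}) with $\lambda=1/\sqrt{1+2\epsilon^2}$ to transport the result. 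Your route is more self-contained and makes the duality between $\Gamma^\epsilon$ and the admissible frequencies $n/(1+2\epsilon^2)$ completely explicit; the paper's route is shorter and emphasizes that nothing new is happening beyond a rescaling, at the cost of a small bookkeeping check that the dilation factors cancel correctly. Either is a perfectly good proof.
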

\begin{proof}
	This is just a rescaling of Lemma \ref{summationformula}. Observe that $\Gamma^\epsilon=(1+2\epsilon^2)\Gamma$. Using Lemma \ref{dilation lemma} with $\lambda=1/\sqrt{1+2\epsilon^2}$, and noting that $\delta^*_{\sqrt{1+2\epsilon^2}}P^\epsilon f=P^1\delta^*_{\sqrt{1+2\epsilon^2}}f$, we are done. 
\end{proof}
Observe how the Fourier transform respects dilations:
\begin{lemma} For $g\in L^1(\heis/\Gamma^\epsilon)$, $\lambda>0$, \label{dilationLemma2}
\begin{align}
\mathcal{F}_{\hredee}\left(\delta_\lambda^* g\right)=\lambda^{-4}\mathcal{F}_\hredeee(g)(n), \quad \forall n\in\mathbb{Z}^*.
\end{align}
\end{lemma}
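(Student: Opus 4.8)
The plan is to treat this exactly as in the proof of Lemma \ref{dilation lemma}, the only new bookkeeping being the $\epsilon$-dependence of the two lattices and of their representation parameters. (For the right-hand side $\mathcal{F}_\hredeee(g)(n)$ to be defined I read $g$ as an element of $L^1(\hredeee)$; since $\delta_\lambda$ is an automorphism of $\heis$ carrying $\Gamma_\lambda^\epsilon$ onto $\Gamma^{\epsilon\lambda}$ --- one checks directly that $\delta_\lambda(0, k\pi(\lambda^{-2}+2\epsilon^2)) = (0, k\pi(1 + 2\epsilon^2\lambda^2))$ --- the pullback $\delta_\lambda^*$ carries $L^1(\hredeee)$ into $L^1(\hredee)$, so $\delta_\lambda^* g$ is a legitimate argument for $\mathcal{F}_\hredee$.)

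First I would unwind the definition \eqref{reducedFourierTransform}, adapted to these lattices. The index-$n$ reduced Fourier transform on $\hredee$ uses the representation $\beta_{h}$ with $h = n/(\lambda^{-2}+2\epsilon^2)$, namely the unique parameter for which $\beta_h$ descends to $\heis/\Gamma_\lambda^\epsilon$, since $\Gamma_\lambda^\epsilon$ has central period $\pi(\lambda^{-2}+2\epsilon^2)$; likewise the index-$n$ transform on $\hredeee$ uses $\beta_{n/(1+2\epsilon^2\lambda^2)}$. (Both conventions are consistent with Lemma \ref{poissonLemma2}.) Writing out $\mathcal{F}_\hredee(\delta_\lambda^* g)(n)$ and substituting $(w, s) = \delta_\lambda(z, t) = (\lambda z, \lambda^2 t)$, I pick up the Jacobian factor $d(z, t) = \lambda^{-4}\, d(w, s)$ --- the $\lambda^{-4}$ being exactly the homogeneous dimension $4$ appearing in Lemma \ref{dilation lemma} --- while the domain of integration passes from $\hredee$ to $\hredeee$.

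The one point requiring care is that the representation parameter transforms to the correct value. Using the dilation identity $\beta_h(\lambda^{-1} w, \lambda^{-2} s) = \beta_{h/\lambda^2}(w, s)$ (valid for $\lambda > 0$, and already used in Lemma \ref{dilation lemma}), the integrand's representation becomes $\beta_{h/\lambda^2}(w, s)$ with $h/\lambda^2 = \frac{n}{\lambda^2(\lambda^{-2}+2\epsilon^2)} = \frac{n}{1 + 2\epsilon^2\lambda^2}$, which is precisely the parameter defining $\mathcal{F}_\hredeee$ at index $n$. Assembling these steps yields $\mathcal{F}_\hredee(\delta_\lambda^* g)(n) = \lambda^{-4}\mathcal{F}_\hredeee(g)(n)$. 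I expect no genuine obstacle: the entire content is checking that the central periods of $\Gamma_\lambda^\epsilon$ and $\Gamma^{\epsilon\lambda}$ match under $\delta_\lambda$ so that the scaled parameter $h/\lambda^2$ lands on the correct lattice, and everything else is the same change of variables as in Lemma \ref{dilation lemma}. If one wants to be fully rigorous about the operator-valued (Bochner) integral, the computation can be run against arbitrary $F, G \in \mathcal{H}$ as in the proof of Lemma \ref{summationformula}.
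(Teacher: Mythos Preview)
Your proposal is correct and follows essentially the same approach as the paper: identify that $\delta_\lambda$ carries $\Gamma_\lambda^\epsilon$ onto $\Gamma^{\epsilon\lambda}$ (the paper phrases this as $\Gamma^\epsilon_\lambda = \lambda^{-2}(1+2\epsilon^2\lambda^2)\Gamma$ and $\Gamma^{\epsilon\lambda} = (1+2\epsilon^2\lambda^2)\Gamma$), then invoke the change-of-variables computation of Lemma~\ref{dilation lemma}. You are also right to flag that the hypothesis should read $g\in L^1(\hredeee)$ rather than $L^1(\heis/\Gamma^\epsilon)$, which is how the lemma is actually applied in the proof of Theorem~\ref{slice}.
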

\begin{proof}
	Observe that $\Gamma^\epsilon_\lambda=\lambda^{-2}(1+2\epsilon^2)\Gamma$, and $\Gamma^{\epsilon\lambda}=(1+2\epsilon^2\lambda^2)\Gamma$. Then apply Lemma \ref{dilation lemma}. 
\end{proof}

As before, $I^{\epsilon, \textnormal{red}}$ is a convolution operator by a compactly supported distribution. We compute its {generalized} Fourier multiplier:
\begin{proposition}\label{multProp}
	For $g\in L^1(\heis/\Gamma^\epsilon)$, 
	\begin{align*}
	\mathcal{F}_{\hrede}\left(I^{\epsilon, \textnormal{red}} g\right)(n)=2\pi\mathcal{J}_n\left(\frac{1}{\sqrt{1+2\epsilon^2}}\right)\circ\mathcal{F}_\hrede(g)(n), \quad \forall n\in\mathbb{Z}^*.
	\end{align*}
\end{proposition}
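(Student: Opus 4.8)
The plan is to mimic the computation of Proposition \ref{multiplier prop} essentially verbatim, replacing $\gamma_1$ by $\gamma_1^\epsilon$, the quotient $\heis/\Gamma$ by $\hrede=\heis/\Gamma^\epsilon$, and the representation $\beta_n$ by the one that actually descends to $\heis/\Gamma^\epsilon$. Since $\Gamma^\epsilon=\{(0,k\pi(1+2\epsilon^2)):k\in\mathbb{Z}\}=(1+2\epsilon^2)\Gamma$ and $\beta_h(z,t)=e^{2iht}\beta_h(z,0)$, the representation $\beta_h$ descends to $\heis/\Gamma^\epsilon$ exactly when $h(1+2\epsilon^2)\in\mathbb{Z}$; hence $\mathcal{F}_\hrede(g)(n)$ is built from $\beta_{n/(1+2\epsilon^2)}$ (consistent with Lemma \ref{poissonLemma2}, which relates $\mathcal{F}_{\heis/\Gamma^\epsilon}$ at index $n$ to $\fheis$ at $n/(1+2\epsilon^2)$).

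First I would write $I^{\epsilon,\textnormal{red}}$ as a convolution and push the integration through the reduced Fourier transform exactly as before: unfold the definition, use unimodularity of $\heis/\Gamma^\epsilon$ to perform the substitution $(z,t)\mapsto(z,t)\gamma_1^\epsilon(s)^{-1}$, and apply the unitary homomorphism property $\beta_{n/(1+2\epsilon^2)}\!\left((z,t)\gamma_1^\epsilon(s)^{-1}\right)^*=\beta_{n/(1+2\epsilon^2)}\!\left(\gamma_1^\epsilon(s)\right)\circ\beta_{n/(1+2\epsilon^2)}(z,t)^*$. Factoring the $s$-integral out of the $(z,t)$-integral then yields
\begin{align*}
\mathcal{F}_\hrede\left(I^{\epsilon,\textnormal{red}}g\right)(n)=\left(\int_0^{2\pi}\beta_{n/(1+2\epsilon^2)}\left(\gamma_1^\epsilon(s)\right)ds\right)\circ\mathcal{F}_\hrede(g)(n),
\end{align*}
reducing the whole statement to identifying the operator-valued multiplier in parentheses.

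The crux is showing that this multiplier equals $2\pi\mathcal{J}_n\!\left(1/\sqrt{1+2\epsilon^2}\right)$, where the interplay between the extra $\partial_t$-velocity of the taming geodesic and the rescaled frequency $h=n/(1+2\epsilon^2)$ does all the work. Writing $\gamma_1^\epsilon(s)=\left(e^{is},\tfrac{1}{2}s(1+2\epsilon^2)\right)$ and abbreviating $h=n/(1+2\epsilon^2)$, $r=1/\sqrt{1+2\epsilon^2}$, I would use $\beta_h(z,t)=e^{2iht}\beta_h(z,0)$ to get $\beta_h\!\left(\gamma_1^\epsilon(s)\right)=e^{ihs(1+2\epsilon^2)}\beta_h(e^{is},0)=e^{ins}\beta_h(e^{is},0)$, the central cancellation being $h(1+2\epsilon^2)=n$. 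The explicit Bargmann-Fock formula \eqref{BFock}, together with $\sqrt{h}=\sqrt{n}\,r$ and $h=nr^2$, then gives $\beta_h(e^{is},0)=\beta_n(re^{is},0)$ (identical translation by $\sqrt{n}\,re^{is}$ and identical Gaussian weight $e^{-nr^2/2}$). Hence $\beta_h\!\left(\gamma_1^\epsilon(s)\right)=e^{ins}\beta_n(re^{is},0)=\beta_n\!\left(re^{is},s/2\right)$, and integrating over $s\in[0,2\pi]$ returns exactly $2\pi\mathcal{J}_n(r)$ by Definition \eqref{besselop3}, as claimed.

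I expect the only genuine obstacle to be bookkeeping: pinning down the normalization $h=n/(1+2\epsilon^2)$ so that $\mathcal{F}_\hrede$ is built from the correct descending representation, and verifying that the Bargmann-Fock weight and translation argument match term-by-term under the substitutions $\sqrt{h}=\sqrt{n}\,r$ and $h=nr^2$. No new analytic subtlety arises, since the Fubini/Bochner manipulations and the integrability of $g(z,t)\langle\beta_{n/(1+2\epsilon^2)}(z,t)^*F,G\rangle_\mathcal{H}$ are justified exactly as in Proposition \ref{multiplier prop}.
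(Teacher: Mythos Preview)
Your proposal is correct and follows essentially the same route as the paper: unfold the definition, use unimodularity and the homomorphism property to factor out the operator-valued multiplier $\int_0^{2\pi}\beta_{n/(1+2\epsilon^2)}\bigl(\gamma_1^\epsilon(s)\bigr)\,ds$, and identify it with $2\pi\mathcal{J}_n\bigl(1/\sqrt{1+2\epsilon^2}\bigr)$. If anything, you supply more detail than the paper does on the last identification, explicitly verifying $\beta_{n/(1+2\epsilon^2)}\bigl(\gamma_1^\epsilon(s)\bigr)=\beta_n\bigl(re^{is},s/2\bigr)$ via the Bargmann--Fock formula, whereas the paper simply asserts this equality in its final line.
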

\begin{proof}
\begin{align*}
\mathcal{F}_{\hrede}\left(I^{\epsilon, \textnormal{red}} g\right)(n)
&=\int_\hrede \int_0^{2\pi}g\left((z, t)\gamma_1^\epsilon (s)\right)\beta_{n/(1+2\epsilon^2)}(z, t)^*ds{d\mu^\epsilon_1(z, t)} \\
=&\int_0^{2\pi}\int_{\heis/\Gamma^\epsilon}g\left(z, t\right)\beta_{n/(1+2\epsilon^2)}\left((z, t)\gamma_1^\epsilon(s)^{-1}\right)^*{d\mu^\epsilon_1(z, t)} ds\\
=&\int_0^{2\pi }\int_{\heis/\Gamma^\epsilon}g\left(z, t\right)\beta_{n/(1+2\epsilon^2)}\left(\gamma_1(s)\right)\circ\beta_{n/(1+2\epsilon^2)}(z, t)^*{d\mu^\epsilon_1(z, t)} ds\\
&=\int_0^{2\pi}\beta_{n/(1+2\epsilon^2)}\left(\gamma_1^\epsilon (s)\right)ds \circ \mathcal{F}_\hrede(g)(n).\\
&=:2\pi\mathcal{J}_n\left(\frac{1}{\sqrt{1+2\epsilon^2}}\right)\circ\mathcal{F}_\hrede(g)(n).
\end{align*}
\end{proof}
We may now prove the Heisenberg Fourier Slice Theorem for $g_\epsilon$:
\begin{proof}[Proof of Theorem \ref{slice}]

Combining Proposition \ref{factorication2} and \ref{multProp},
\begin{align}
\mathcal{F}_\hrede\left(I^\epsilon f\right)
=2\pi\mathcal{J}_n\left(\frac{1}{\sqrt{1+2\epsilon^2}}\right)\circ\fheis f\left(\frac{n}{1+2\epsilon^2}\right).\label{redSlice2}
\end{align}
Now, exploiting homogeneity of $I^\epsilon$,
\begin{align*}
\mathcal{F}_\hredee\left(I_\lambda^\epsilon f\right)(n)
&=\lambda^{-1}\mathcal{F}_\hredee\left(\delta_{\lambda}^*I^{\epsilon\lambda}_1\left(\delta_{1/\lambda}^*f\right)\right)(n), & \text{by Propositoin } \ref{homoProp2},\\
&=\lambda^{-5}\mathcal{F}_\hredeee\left(I_1^{\epsilon\lambda}\left(\delta_{1/\lambda}^*f\right)\right)(n), & \text{by Lemma } \ref{dilationLemma2},\\
&=2\pi \lambda^{-5}\mathcal{J}_n\left(\frac{1}{\sqrt{1+2\epsilon^2\lambda^2}}\right)\circ\fheis(\delta_{1/\lambda}^*f)\left(\frac{n}{1+2\epsilon^2\lambda^2}\right), & \text{by } \eqref{redSlice2},\\
&=(2\pi/\lambda)\mathcal{J}_n\left(\frac{1}{\sqrt{1+2\epsilon^2\lambda^2}}\right)\circ\fheis f\left(\frac{n\lambda^2}{1+2\epsilon^2\lambda^2}\right), &\text{by Lemma } \ref{dilationLemma2}.&\qedhere
\end{align*}
\end{proof}
\begin{proposition}\label{propInjective}
	Let $\epsilon>0$ and $n\in\mathbb{Z}^*$ be fixed. Then  $\mathcal{J}_n\left(\frac{1}{\sqrt{1+2\epsilon^2\lambda^2}}\right): \mathcal{H}\to\mathcal{H}$ is injective for almost all $\lambda>0$.
\end{proposition}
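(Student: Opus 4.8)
The plan is to reduce the injectivity of the operator to the nonvanishing of a sequence of Laguerre polynomial values, and then to observe that, for each fixed degree, only finitely many values of $\lambda$ can produce a zero. First, since $\mathcal{J}_{-n}(r)=\mathcal{J}_n(r)$ by Proposition \ref{propSVDofJ}, I may assume $n>0$. Writing $r=r(\lambda):=\left(1+2\epsilon^2\lambda^2\right)^{-1/2}$, Corollary \ref{nonvanishing2} says that $\mathcal{J}_n(r(\lambda))$ is injective if and only if $L_j^{(n)}\!\left(n\,r(\lambda)^2\right)=L_j^{(n)}\!\left(\tfrac{n}{1+2\epsilon^2\lambda^2}\right)\neq 0$ for every $j\in\mathbb{N}$. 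Thus the set of ``bad'' $\lambda$ is $B:=\{\lambda>0:\ \exists\, j\in\mathbb{N},\ L_j^{(n)}(n\,r(\lambda)^2)=0\}$, and the goal is to show that $B$ is Lebesgue null.

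Next I would analyze the change of variables $x=x(\lambda):=\tfrac{n}{1+2\epsilon^2\lambda^2}$. As $\lambda$ ranges over $(0,\infty)$, the map $x(\lambda)$ is a smooth, strictly decreasing bijection onto the open interval $(0,n)$; in particular it is a diffeomorphism with nonvanishing derivative. For each fixed $j$, the Laguerre polynomial $L_j^{(n)}$ has degree $j$ and hence at most $j$ real roots, so $L_j^{(n)}\circ x$ vanishes at only finitely many $\lambda>0$. Indeed, each root $x_0$ of $L_j^{(n)}$ lying in $(0,n)$ pulls back, by the injectivity of $x(\lambda)$, to at most one positive $\lambda$ solving $\tfrac{n}{1+2\epsilon^2\lambda^2}=x_0$.

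Taking the union over $j\in\mathbb{N}$, the set $B$ is a countable union of finite sets, hence countable, and in particular has Lebesgue measure zero. Consequently $\mathcal{J}_n\!\left((1+2\epsilon^2\lambda^2)^{-1/2}\right)$ is injective for all but countably many $\lambda>0$, which is stronger than, and in particular implies, the asserted almost-everywhere injectivity. The only point that requires a little care is the reduction to finitely many bad $\lambda$ per degree $j$, and this is precisely where the strict monotonicity (hence injectivity) of $\lambda\mapsto x(\lambda)$ enters: it guarantees that each of the finitely many roots of $L_j^{(n)}$ contributes at most one positive $\lambda$. I expect no genuine obstacle here, as the argument amounts to transporting the discreteness of the zero set of a nonzero polynomial across a diffeomorphism.
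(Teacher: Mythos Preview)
Your proof is correct and follows essentially the same approach as the paper: reduce via Corollary~\ref{nonvanishing2} to the nonvanishing of $L_j^{(n)}(nr^2)$, then observe that the total collection of Laguerre zeros is countable, hence the bad $\lambda$ form a null set. The paper's proof is a one-line version of yours, simply noting that there are countably many zeros; you have made explicit the (strictly monotone) change of variables $\lambda\mapsto n/(1+2\epsilon^2\lambda^2)$ that justifies pulling back countability to the $\lambda$-axis.
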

\begin{proof}
	Set $r=\frac{1}{\sqrt{1+2\epsilon^2\lambda^2}}$. By Corollary \ref{nonvanishing2}, the operator $\mathcal{J}_n(r)$ is injective if and only if $nr^2$ is not a zero of $L_j^{(n)}$ for any $j\in\mathbb{N}$. Since there are only countably many such zeros, the proposition follows. 
\end{proof}

We now have the tools to prove injectivity of the taming X-ray transform $I^\epsilon$:
\begin{proof}[Proof of Theorem \ref{theorem1epsilon}]
	Suppose, $I_{\lambda}^\epsilon f=0$ for all $\lambda\in (0, \eta)$, where $\eta>0$. Then by Theorem \ref{slice} and Proposition \ref{propInjective}, 
	\begin{align*}
	0=\fheis f\left(\frac{n\lambda^2}{1+2\epsilon^2\lambda^2}\right)
	\end{align*}
	for almost all $\lambda\in(0, \eta)$, and all $n\in\mathbb{Z}^*$. Let $A$ be the set of all such $\lambda\in(0, \eta)$, and $B=\{\lambda^2/(1+2\epsilon^2\lambda^2): \lambda\in A\}$. Then in other words
	\begin{align*}
	0=\fheis f(h) && \forall h\in \bigcup_{n\in\mathbb{Z}} nB.
	\end{align*}
	Since $B$ has full measure on the interval $\left(0, \frac{\eta^2}{1+2\epsilon^2\eta^2}\right)$, we know $\fheis{f}=0$ almost everywhere. Therefore $f=0$ by the Fourier Inversion Theorem. 
\end{proof}

\section{Appendix}
\subsection{SVD of $I^\textnormal{red}|_{^0L^2(\hred)}$}\label{redSVD}
While not strictly necessary for our main result, the computation in Proposition \ref{propSVDofJ} also gives us the SVD {of} $\xred$ when restricted to a specific subspace. Here, similarly with \cite{monard2019functional}, we implicitly exploit the fact that $\xred$ intertwines the Heisenberg Laplacian  on $\heis$ with another differential operator on $\heis/\Gamma$ for which the functions $M_{jk}^h$, $h\in\mathbb{R}^*$, and $M_{jk}^{n}$, $n\in\mathbb{Z}^*$, are eigenfunctions, respectively. 

Consider the subspaces of $L^2(\hred)$
\begin{align*}
L^2(\mathbb{C})\cong&\{f\in L^2(\hred): f(z, t)=f(z, 0),\; \forall(z, t)\in\hred \}\\
 ^0L^2(\hred):=&\{f\in L^2(\hred): \int_0^\pi f(z, t)dt=0, \; \forall z\in\mathbb{C} \}.
\end{align*}

\begin{lemma} We have the orthogonal decomposition\label{orthonormal}
	\begin{align}
	L^2(\heis/\Gamma)\cong L^2(\mathbb{C})\oplus {^0L^2(\hred)}. \label{orthoDecomp}
	\end{align}
\end{lemma}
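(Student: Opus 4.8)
The plan is to realize the decomposition concretely through the fiberwise average in the central variable. Since $\Gamma=\{(0,k\pi):k\in\mathbb{Z}\}$, the central coordinate of $\hred$ lives on the circle $\mathbb{R}/\pi\mathbb{Z}$, and the Haar measure $d\mu_1\cong dx\wedge dy\wedge dt$ presents $\hred$ as the product measure space $\mathbb{C}\times(\mathbb{R}/\pi\mathbb{Z})$. For $f\in L^2(\hred)$ I would introduce the averaging operator
\begin{align*}
\Pi_0 f(z,t):=\frac{1}{\pi}\int_0^\pi f(z,s)\,ds,
\end{align*}
whose output is visibly independent of $t$, so that $\operatorname{ran}\Pi_0\subseteq L^2(\mathbb{C})$ (and $\Pi_0 g=g$ for every $t$-independent $g$, hence $\operatorname{ran}\Pi_0=L^2(\mathbb{C})$), while $\ker\Pi_0=\{f:\int_0^\pi f(z,t)\,dt=0\}={}^0L^2(\hred)$ directly from the definitions.

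First I would verify that $\Pi_0$ is a well-defined bounded operator on $L^2(\hred)$. Applying Cauchy--Schwarz fiberwise gives $|\Pi_0 f(z,t)|^2\le \tfrac1\pi\int_0^\pi|f(z,s)|^2\,ds$, and integrating against $dt$ over $[0,\pi]$ and then against $dz$ yields $\|\Pi_0 f\|_{L^2(\hred)}\le\|f\|_{L^2(\hred)}$, so $\Pi_0$ is a contraction. Next, $\Pi_0^2=\Pi_0$, since averaging a $t$-independent function returns it unchanged, and $\Pi_0$ is self-adjoint, as Fubini's theorem gives $\langle\Pi_0 f,g\rangle=\langle f,\Pi_0 g\rangle$. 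Therefore $\Pi_0$ is an orthogonal projection on $L^2(\hred)$, and the standard Hilbert-space identity $\mathcal{H}=\operatorname{ran}\Pi_0\oplus\ker\Pi_0$ as an orthogonal direct sum is, after the identifications above, precisely \eqref{orthoDecomp}. In particular both summands are closed, being the range and kernel of a bounded projection.

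If one prefers to avoid invoking the projection formalism, the orthogonality can be exhibited by hand: for $g\in L^2(\mathbb{C})$, which is independent of $t$, and $h\in{}^0L^2(\hred)$, Fubini gives $\langle g,h\rangle=\int_\mathbb{C} g(z)\,\overline{\bigl(\int_0^\pi h(z,t)\,dt\bigr)}\,dz=0$, while the splitting $f=\Pi_0 f+(f-\Pi_0 f)$ shows every $f$ lies in the algebraic sum; orthogonality then forces the sum to be direct. An equivalent and perhaps more illuminating route is the Fourier series in $t$: writing $L^2(\mathbb{R}/\pi\mathbb{Z})=\mathbb{C}\cdot 1\oplus\{\text{zero-mean}\}$ via the characters $e^{2int}$, the $n=0$ mode produces $L^2(\mathbb{C})$ and the $n\neq0$ modes produce ${}^0L^2(\hred)$, which also dovetails with the factor $e^{2int}$ appearing in the matrix coefficients \eqref{matrix coefficient}. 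I do not anticipate a genuine obstacle here; the only points requiring care are fixing the central period as $\pi$ (for $\lambda=1$) and confirming the contraction estimate and self-adjointness of $\Pi_0$, both of which are routine consequences of Cauchy--Schwarz and Fubini.
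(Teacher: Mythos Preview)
Your proposal is correct and is essentially the paper's own argument: the paper defines exactly your $\Pi_0$ (calling it $f_0$), writes $f=f_0+(f-f_0)$, and checks orthogonality by the same Fubini computation you give in your ``by hand'' paragraph. Your additional packaging via the projection formalism and the Fourier-series remark is a mild elaboration, but the core idea is identical.
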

\begin{proof}
	{Given} $f\in L^2(\hred)$, let 
	\begin{align*}
	f_0(z, t):=\frac{1}{\pi}\int_0^{\pi}f(z, t)dt && \text{and} && g=f-f_0.
	\end{align*}
	 {Then} $f_0\in L^2(\mathbb{C})$ and $g\in {^0}L^2(\hred)$. 
	
	Furthermore, for arbitrary $f_0\in L^2(\mathbb{C})$, and $g\in {^0}L^2(\hred)$, 
	\begin{align*}
	\int_\hred f_0(z, t)\overline{g(z, t)}{d\mu_1(z, t)}=\int_\mathbb{C}f_0(z)\int_0^\pi \overline{g(z, t)}dtdz=0.
	\end{align*}
	The orthogonal decomposition \eqref{orthoDecomp} follows.
\end{proof}
In what follows, set 
\begin{align}
\psi^n_{jk}:=\frac{\sqrt{2|n|}}{2\pi}M^{n}_{jk}; \quad j, k\in\mathbb{N},\, n\in\mathbb{Z}^*
\end{align} 
for $M_{jk}^n$ defined in \eqref{matrix coefficient}. The functions $\psi^n_{jk}$ {for, $n\in\mathbb{Z}^*$ and $j, k\in\mathbb{N}$,} form an orthonormal basis for ${^0L^2(\hred)}$. (See \cite[Ch. 4]{thangavelu2012harmonic}, where the author uses slightly different notation.) 
\begin{proposition}
	\begin{align*}
	I^\textnormal{red}: L^2(\heis/\Gamma)\to L^2(\heis/\Gamma)
	\end{align*}
	is well-defined and bounded. 
\end{proposition}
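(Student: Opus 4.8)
The plan is to show that $I^\textnormal{red}$, being the left-invariant (convolution) operator already analyzed in the Fourier Slice computation, acts diagonally with respect to the orthogonal decomposition of Lemma \ref{orthonormal} and is a uniformly bounded multiplier on each summand. Concretely, I would verify that $I^\textnormal{red}$ preserves $L^2(\mathbb{C})\oplus {^0L^2(\hred)}$, compute its action on an orthonormal basis of each piece, and read off the operator norm from the operator bound $\|\mathcal{J}_n\|_\textnormal{op}\le 1$ of \eqref{bounded} together with $|J_0|\le 1$.

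First I would treat the ``cylindrical'' part $^0L^2(\hred)$, where the orthonormal basis $\{\psi^n_{jk}\}$ is available. Since $\beta_n$ is a representation, $\beta_n((z,t)\gamma_1(s))=\beta_n(z,t)\beta_n(\gamma_1(s))$, so for the matrix coefficients $M^n_{jk}=\langle\beta_n(\cdot)\omega_j,\omega_k\rangle_\mathcal{H}$, integrating in $s$ and invoking \eqref{besselop2} gives
\begin{align*}
I^\textnormal{red}M^n_{jk}(z,t)=\left\langle\beta_n(z,t)\Big(2\pi\mathcal{J}_n\Big)\omega_j,\;\omega_k\right\rangle_\mathcal{H}.
\end{align*}
By Proposition \ref{propSVDofJ} at $r=1$ we have $\mathcal{J}_n\omega_j=M^n_{j,j+n}(1,0)\,\omega_{j+n}$ for $n>0$, whence $I^\textnormal{red}\psi^n_{jk}=2\pi M^n_{j,j+n}(1,0)\,\psi^n_{j+n,k}$ (the normalization in $\psi^n_{jk}$ cancels), and similarly for $n<0$ via $M^n_{jk}(z,t)=M^{|n|}_{jk}(\overline z,-t)$ and $\mathcal{J}_{-n}=\mathcal{J}_n$. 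Since $(n,j,k)\mapsto(n,j+n,k)$ is injective, distinct basis vectors map to orthogonal multiples of distinct basis vectors, so $I^\textnormal{red}$ sends $^0L^2(\hred)$ into itself, and for $g=\sum a^n_{jk}\psi^n_{jk}$,
\begin{align*}
\|I^\textnormal{red}g\|_{L^2}^2=(2\pi)^2\sum_{n,j,k}|a^n_{jk}|^2\,|M^n_{j,j+n}(1,0)|^2\le(2\pi)^2\|g\|_{L^2}^2,
\end{align*}
because $|M^n_{j,j+n}(1,0)|=\|\mathcal{J}_n\omega_j\|_\mathcal{H}\le\|\mathcal{J}_n\|_\textnormal{op}\le 1$ by \eqref{bounded}.

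Next I would handle the $t$-independent part $L^2(\mathbb{C})$, which the reduced Fourier transform cannot see (the $n=0$ mode). For $g$ independent of $t$, the group law gives $I^\textnormal{red}g(z,t)=\int_0^{2\pi}g(z+e^{is})\,ds=2\pi M^1g(z)$, still $t$-independent, so $L^2(\mathbb{C})$ is preserved as well. By the classical Fourier Slice Theorem \eqref{radon}, $\widehat{M^1g}(\zeta)=J_0(|\zeta|)\hat g(\zeta)$, and since $|J_0|\le 1$, Plancherel on $\mathbb{C}$ yields $\|I^\textnormal{red}g\|_{L^2(\mathbb{C})}\le 2\pi\|g\|_{L^2(\mathbb{C})}$. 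Combining the two orthogonal blocks, $I^\textnormal{red}$ extends from the dense span of these basis functions (each lying in $L^1(\hred)\cap L^2(\hred)$, where the defining integral converges absolutely) to a bounded operator on $L^2(\hred)$ with $\|I^\textnormal{red}\|\le 2\pi$.

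The main obstacle is precisely the $n=0$ block: the reduced group Fourier transform $\mathcal{F}_{\hred}$ annihilates $L^2(\mathbb{C})$, so the clean multiplier estimate via $\|\mathcal{J}_n\|_\textnormal{op}\le1$ does not reach these functions, and one cannot fall back on Young's inequality, since $I^\textnormal{red}$ is convolution against a singular measure supported on a curve rather than an $L^1$ kernel. Boundedness there is a genuinely oscillatory phenomenon, recovered only through the decay-free but bounded Bessel multiplier $J_0$. A secondary point requiring care is the bookkeeping that the images $\psi^n_{j+n,k}$ stay mutually orthogonal, which is what upgrades the coefficientwise bound into an operator bound.
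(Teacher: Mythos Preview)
Your argument is correct, but it takes a considerably more elaborate route than the paper. The paper gives a three-line soft proof: for $g\in C_c(\hred)$, Cauchy--Schwarz on the $s$-integral yields $|I^\textnormal{red}g(z,t)|^2\le 2\pi\int_0^{2\pi}|g((z,t)\gamma_1(s))|^2\,ds$; integrating over $\hred$, swapping order, and using bi-invariance of $\mu_1$ (unimodularity) gives $\|I^\textnormal{red}g\|_{L^2}\le 2\pi\|g\|_{L^2}$. No Fourier analysis, no basis, no decomposition into $t$-modes is needed. This is exactly the Minkowski-type bound showing that convolution by a finite measure of total mass $2\pi$ on a unimodular group is bounded on $L^2$ with norm at most $2\pi$. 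Your remark that ``one cannot fall back on Young's inequality'' therefore overstates the obstacle: while $\kappa$ is indeed a singular measure rather than an $L^1$ kernel, the measure version of Young/Minkowski applies directly and is precisely what the paper uses.

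What your approach buys is extra information: you essentially establish the SVD of $I^\textnormal{red}|_{^0L^2(\hred)}$ (the content of Theorem~\ref{SVD}) and identify $I^\textnormal{red}|_{L^2(\mathbb{C})}=2\pi M^1$ (the content of Proposition~\ref{decomp}) along the way, and then read off boundedness from $\|\mathcal{J}_n\|_\textnormal{op}\le 1$ and $|J_0|\le 1$. In the paper these finer statements appear \emph{after} boundedness, and your logic reverses that order. That is legitimate---the $\psi^n_{jk}$ lie in $L^1\cap L^2$ so the pointwise computation is justified, and the images are mutually orthogonal as you note---but it makes the proof of mere boundedness depend on the representation-theoretic machinery rather than on the elementary translation-invariance argument.
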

\begin{proof} For $g\in C_c(\hred)$, the Cauchy-Schwartz Inequality yields
	\begin{align}
	|\xred g(z, t)|^2=\left(\int_0^{2\pi}|g\left((z, t)(e^{i\theta}, \theta/2)\right)|d\theta\right)^2\leq 2\pi
	 \int_0^{2\pi}|g\left((z, t)(e^{i\theta}, \theta/2)\right)|^2d\theta. \label{reducedBounded}
	\end{align}
	Then
	\begin{align*}
	||\xred g ||^2_{L^2(\hred)}&=\int_\hred |\xred g(z, t)|^2{d\mu_1(z, t)}\\
	&\leq(2\pi)\int_0^{2\pi}\int_\hred |g\left((z, t)(e^{i\theta}, \theta/2)\right)|^2{d\mu_1(z, t)}d\theta, && \text{by }\eqref{reducedBounded}, \\
	&=(2\pi)^2\int_\hred |g\left((z, t)\right)|^2{d\mu_1(z, t)}, && {\text{by left-invariance of }\mu_1},\\
	&=(2\pi)^2||g||^2_{L^2(\hred)},
	\end{align*}
	so $\xred$ extends to a bounded function from $L^2(\hred)$ to itself. 
\end{proof}
\begin{proposition}\label{decomp}
	$\xred$ preserves the orthogonal decomposition in Lemma \ref{orthonormal}. i.e{,} 
	\begin{align*}
	&\xred|_{L^2(\mathbb{C})}: L^2(\mathbb{C})\to L^2(\mathbb{C})\\
	&\xred|_{{^0}L^2(\hred)}: {^0}L^2(\hred)\to {^0}L^2(\hred).
	\end{align*}
	Furthermore, the restriction $I^\textnormal{red}|_{L^2(\mathbb{C})}$ is essentially $2\pi$ times the Mean Value Transform $M^1$. 
\end{proposition}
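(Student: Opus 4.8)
The plan is to verify the two claimed invariances separately and then read off the identification with $M^1$ directly from the $L^2(\mathbb{C})$ computation. Boundedness of $\xred$ on $L^2(\hred)$ is already in hand from the previous proposition, so the only content here is identifying which summand the image lands in. The single structural fact that drives everything is the shape of the group law. Writing the model geodesic at $\lambda=1$ (so $R=1$) as $\gamma_1(s)=(e^{is}, s/2)$ and expanding $(z,t)\gamma_1(s)$ with the Heisenberg multiplication, the center coordinate of the translated point is
\begin{align*}
t + \tfrac{s}{2} + \tfrac{1}{2}\operatorname{Im}\!\left(\bar z\, e^{is}\right),
\end{align*}
that is, left multiplication by $\gamma_1(s)$ shifts the $t$-coordinate by a quantity $c(z,s):=s/2+\tfrac12\operatorname{Im}(\bar z e^{is})$ depending only on $(z,s)$, while the $\mathbb{C}$-coordinate $z+e^{is}$ does not involve $t$ at all.

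For the restriction to $L^2(\mathbb{C})$: if $g(z,t)=g(z)$ is independent of $t$, then $\xred g(z,t)=\int_0^{2\pi} g(z+e^{is})\,ds$ is again independent of $t$, so $\xred$ maps $L^2(\mathbb{C})$ into itself. Comparing with the Mean Value Transform $M^1$ in \eqref{radon}, this integral is exactly $2\pi M^1 g(z)$, which establishes the ``Furthermore'' claim at once, the word ``essentially'' absorbing the identification of $L^2(\mathbb{C})$ with $L^2$ of the plane.

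For the restriction to ${}^0L^2(\hred)$: I would show $\int_0^{\pi}\xred g(z,t)\,dt=0$ for almost every $z$. Interchanging the $s$- and $t$-integrals by Fubini (legitimate at fixed $z$, since the box $[0,2\pi]\times[0,\pi]$ is compact and the pullback of $g$ under the smooth map $(s,t)\mapsto(z+e^{is},\,t+c(z,s))$ is integrable there for a.e. $z$), the inner integral becomes $\int_0^{\pi} g\!\left(z+e^{is},\, t+c(z,s)\right)dt$ with $c(z,s)$ independent of $t$. Since $g$ is $\pi$-periodic in its center variable (the period $\pi R^2=\pi$ at $\lambda=1$) and lies in ${}^0L^2$, the integral over a full period is translation-invariant and vanishes:
\begin{align*}
\int_0^{\pi} g(w,\, t+c)\,dt=\int_c^{c+\pi} g(w,\tau)\,d\tau=\int_0^{\pi} g(w,\tau)\,d\tau=0.
\end{align*}
Hence $\xred g\in{}^0L^2(\hred)$, and together with the previous paragraph $\xred$ preserves the orthogonal decomposition of Lemma \ref{orthonormal}.

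I do not anticipate a genuine obstacle: the points needing care are only the bookkeeping of the period ($\pi=\pi R^2$ at $\lambda=1$) and the Fubini justification at fixed $z$. The conceptual crux—and the step I would emphasize—is the observation that left multiplication by the model geodesic acts on the center coordinate by a $(z,s)$-dependent \emph{translation}; this is precisely what makes the period-average annihilate ${}^0L^2(\hred)$ while leaving $L^2(\mathbb{C})$ untouched and reducing it to $M^1$.
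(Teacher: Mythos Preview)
Your proof is correct and follows essentially the same route as the paper: both arguments expand the group law to see that $(z,t)\gamma_1(s)=\bigl(z+e^{is},\,t+s/2+\tfrac12\operatorname{Im}(\bar z e^{is})\bigr)$, then use $t$-independence of the $\mathbb{C}$-coordinate to handle $L^2(\mathbb{C})$ and the $t$-independence of the shift $c(z,s)$ together with periodicity to handle ${}^0L^2(\hred)$. Your version is slightly more explicit about the Fubini step and the translation-invariance of the period integral, but the substance is identical.
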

\begin{proof}
	For $f\in L^2(\mathbb{C})$, 
	\begin{align*}
	I^\textnormal{red}|_{L^2(\mathbb{C})}f(z, t)
	&=\int_0^{2\pi}f\left((z, t)(e^{i\theta}, \theta/2)\right)d\theta
	=\int_0^{2\pi}f\left(z+e^{i\theta}, 	t+\theta/2+\tfrac{1}{2}\textnormal{Im}\left(\overline{z}e^{i\theta}\right)\right)d\theta\\
	&=\int_0^{2\pi}f(z+e^{i\theta})d\theta=2\pi M^1f(z), 
	\end{align*}
	and so $\xred f\in L^2(\mathbb{C})$.
	
	For $g\in {^0}L^2(\hred)$, 
	\begin{align*}
	\int_0^\pi \xred g(z, t)dt=\int_0^\pi \int_0^{2\pi}g\left(z+e^{i\theta}, 	t+\theta/2+\tfrac{1}{2}\textnormal{Im}\left(\overline{z}e^{i\theta}\right)\right)d\theta dt=\int_0^{2\pi}\int_0^\pi g(z+e^{i\theta}, t)dtd\theta=0,
	\end{align*}
	so that $\xred g\in {^0}L^2(\hred)$. 
\end{proof}
We know that $\xred|_{L^2(\mathbb{C})}=2\pi M^1$ has a continuous spectrum (see \eqref{radon}, or Remark \ref{zeroCase}), so we restrict the reduced X-ray transform to ${^0L^2(\hred)}$, where {it} has a discrete spectrum, and compute the Singular Value Decomposition there. 

\begin{theorem}[SVD of $I^\text{red}|_{^0L^2(\hred)}$]\label{SVD}
	For all $n\in\mathbb{Z}^*$ and $j, k\in \mathbb{N}$, 
	\begin{align*}
	I^\textnormal{red}|_{^0L^2(\hred)}\psi^n_{jk}=2\pi \sqrt{\frac{j!}{(j+|n|)!}}\left(|n|/e\right)^{|n|/2}L_j^{(|n|)}(|n|)\psi^n_{j+|n|, k}. 
	\end{align*}
\end{theorem}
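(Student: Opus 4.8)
The plan is to transport everything into the Fourier domain, where Proposition~\ref{multiplier prop} turns $\xred$ into the explicit multiplier $2\pi\mathcal{J}_n$ and where the basis functions $\psi^n_{jk}$ have rank-one Fourier transforms concentrated at a single representation level. Since $\xred$ preserves $^0L^2(\hred)$ by Proposition~\ref{decomp}, and $\mathcal{F}_{\heis/\Gamma}$ separates the orthonormal family $\{\psi^m_{pq}\}$ spanning $^0L^2(\hred)$, it suffices to compute $\mathcal{F}_{\heis/\Gamma}\bigl(\xred\psi^n_{jk}\bigr)(m)$ for every $m\in\mathbb{Z}^*$ and recognize it as the Fourier transform of the claimed multiple of $\psi^n_{j+|n|,k}$. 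Note first that each $\psi^n_{jk}$ is a polynomial times a Gaussian in $z$ and has compact $t$-range, so it lies in $L^1(\hred)\cap L^2(\hred)$; thus the multiplier formula of Proposition~\ref{multiplier prop} applies, and the $L^1$- and $L^2$-versions of $\xred$ agree on these functions.

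The key input is the Fourier transform of the basis itself. Because $\psi^n_{jk}=\tfrac{\sqrt{2|n|}}{2\pi}M^n_{jk}$ is a normalized matrix coefficient of $\beta_n$, the off-diagonal vanishing $\mathcal{F}_{\heis/\Gamma}(\psi^n_{jk})(m)=0$ for $m\neq n$ is immediate from the central character: the $t$-integral of $e^{2int}e^{-2imt}$ over a period is zero. On the diagonal $m=n$, a Schur-orthogonality computation gives $\mathcal{F}_{\heis/\Gamma}(\psi^n_{jk})(n)=c_n\,R_{jk}$, where $R_{jk}$ is the rank-one operator $\omega_a\mapsto\delta_{ka}\omega_j$. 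The crucial feature is that $c_n$ depends only on $n$ and not on $(j,k)$; this is forced by the stated orthonormality of the $\psi^n_{jk}$, which pins down $\int_{\hred}M^n_{jk}\overline{M^n_{j'k'}}\,d\mu_1=\tfrac{(2\pi)^2}{2|n|}\delta_{jj'}\delta_{kk'}$ (so explicitly $c_n=2\pi/\sqrt{2|n|}$). This orthogonality in the $z$-variable is exactly the Laguerre-function content behind Thangavelu's basis.

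Now take $n>0$ and apply the multiplier: $\mathcal{F}_{\heis/\Gamma}(\xred\psi^n_{jk})(m)=2\pi\mathcal{J}_m\circ\mathcal{F}_{\heis/\Gamma}(\psi^n_{jk})(m)$ vanishes for $m\neq n$ and equals $2\pi c_n\,\mathcal{J}_n\circ R_{jk}$ for $m=n$. Using the eigenvalue formula \eqref{J(r)} at $r=1$, namely $\mathcal{J}_n\omega_j=\sqrt{\tfrac{j!}{(j+n)!}}(n/e)^{n/2}L_j^{(n)}(n)\,\omega_{j+n}$, one gets $\mathcal{J}_n\circ R_{jk}=\sqrt{\tfrac{j!}{(j+n)!}}(n/e)^{n/2}L_j^{(n)}(n)\,R_{j+n,k}$. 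Comparing with $\mathcal{F}_{\heis/\Gamma}(\psi^n_{j+n,k})(n)=c_n R_{j+n,k}$ (the same $c_n$, since it is independent of the first index), the constant $c_n$ cancels and the two Fourier transforms agree level by level. By injectivity of $\mathcal{F}_{\heis/\Gamma}$ on $^0L^2(\hred)$ this yields the identity for $n>0$; the case $n<0$ follows from $\mathcal{J}_{-n}=\mathcal{J}_n$ (Proposition~\ref{propSVDofJ}) together with the conjugation symmetry $M^n_{jk}(z,t)=M^{|n|}_{jk}(\overline{z},-t)$ recorded after \eqref{matrix coefficient}, which is what produces the $|n|$ throughout the statement.

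The main obstacle I anticipate is establishing the diagonal rank-one structure with a $(j,k)$-independent constant, i.e.\ the Peter--Weyl/Schur orthogonality for matrix coefficients on the reduced Heisenberg group with the correct normalization; everything else is formal substitution of the already-computed eigenvalues of $\mathcal{J}_n$. As an alternative to invoking injectivity of $\mathcal{F}_{\heis/\Gamma}$, one could instead compute $\langle \xred\psi^n_{jk},\psi^m_{pq}\rangle$ directly against the orthonormal family and match it with the expansion of the claimed right-hand side, which rests on the same orthogonality input.
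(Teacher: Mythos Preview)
Your argument is correct, but it is organized differently from the paper's. The paper works directly in physical space: it uses the addition formula for matrix coefficients,
\[
M^{n}_{jk}\bigl((z,t)(e^{i\theta},\theta/2)\bigr)=\sum_{l\geq 0} M^{n}_{jl}(e^{i\theta},\theta/2)\,M^{n}_{lk}(z,t),
\]
which is just the homomorphism property of $\beta_n$, integrates in $\theta$, and invokes the computation \eqref{proofProf1} from Proposition~\ref{propSVDofJ} to pick out the single surviving term $l=j+|n|$. No Fourier transform, no Schur orthogonality, and no injectivity statement is needed; the identity is obtained pointwise in five lines.

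Your route instead transports everything to the Fourier side via Proposition~\ref{multiplier prop}, computes $\mathcal{F}_{\heis/\Gamma}(\psi^n_{jk})$ as a rank-one operator using Schur orthogonality, applies the eigenvalue formula for $\mathcal{J}_n$, and then inverts. This is a legitimate and self-contained argument; the extra ingredient you need beyond what the paper uses is the explicit Schur-orthogonality identity $\int_{\hred} M^n_{jk}\overline{M^n_{ba}}\,d\mu_1=\tfrac{(2\pi)^2}{2|n|}\delta_{jb}\delta_{ka}$ (equivalently, the stated orthonormality of the $\psi^n_{jk}$), together with injectivity of $\mathcal{F}_{\heis/\Gamma}$ on $^0L^2(\hred)$, which is not proved in the paper but follows from that same orthonormal basis. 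The trade-off is that your approach reuses the paper's Fourier machinery more explicitly and makes the link to the Fourier Slice Theorem transparent, whereas the paper's approach is shorter and avoids the detour through operator-valued Fourier transforms by exploiting the multiplicativity of matrix coefficients directly. Your alternative suggestion at the end---computing $\langle \xred\psi^n_{jk},\psi^m_{pq}\rangle$ against the basis---is essentially a hybrid of the two and would also work.
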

\begin{proof}
	Note that, for $(w, s), (z, t)\in\mathbb{H}$
	\begin{align*}
	M_{jk}^{n}\left((w, s)(z, t)\right)
	&=\langle\beta_{n}\left((w, s)(z, t)\right)\omega_j, \omega_k\rangle_\mathcal{H}
	=\langle\beta_{n}(w, s)\circ\beta_{n}(z, t)\omega_j, \omega_k\rangle_\mathcal{H}\\
	&=\sum_{l=0}^\infty\langle\beta_{n}(w, s)\omega_l, \omega_k\rangle_\mathcal{H}\langle\beta_{n}(z, t)\omega_j, \omega_l\rangle_\mathcal{H}=\sum_{l=0}^\infty M_{jl}^{n}(z, t)M_{lk}^{n}(w, s).
	\end{align*}
	{Then}
	\begin{align*}
	\xred|_{^0L^2(\hred)}\psi_{jk}^n(z, t)
	&=\frac{\sqrt{2|n|}}{2\pi}\int_0^{2\pi}M^{n}_{jk}\left((z, t)(e^{i\theta}, \theta/2)\right)d\theta\\
	&=\frac{\sqrt{2|n|}}{2\pi}\sum_{l=0}^\infty\int_0^{2\pi}M^{n}_{jl}(e^{i\theta}, \theta/2)M_{lk}^{n}(z, t) d\theta\\
	&=\frac{\sqrt{2|n|}}{2\pi}\sum_{l=0}^\infty \delta(j-l+|n|)M^{n}_{jl}(1, 0)M^{n}_{lk}(z, t), &&\text{by } \eqref{proofProf1}\text{ in Proposition }\ref{prop1},\\
	&=M^{n}_{j, j+n}(1, 0)\psi^n_{j+|n|, k}(z, t)\\
	&=2\pi \sqrt{\frac{j!}{(j+|n|)!}}\left(|n|/e\right)^{|n|/2}L_j^{(|n|)}(|n|)\psi^n_{j+|n|, k}(z, t).&&\qedhere
	\end{align*}
\end{proof} 
In view of Proposition \ref{decomp} and Theorem \ref{SVD}, the kernel of $I^\text{red}$ on 
 $L^2(\heis/\Gamma)$ is given by the $L^2$-closure of
\begin{align*}
	\text{Span}\{\psi_{jk}^n: j, k\in\mathbb{N}, n\in\mathbb{Z}^*, L^{(|n|)}_j(|n|)=0\}
\end{align*}
We know this kernel contains at least the closure of $\{\psi_{2, k}^2: k=0, 1, 2...\}$ since $L_2^{(2)}(2)=0$. Determining the entire kernel will require a number-theoretic argument (see Remark \ref{laguerreRemark}).
\subsection{Exponential Map for Heisenberg Geodesics}
The sub-Riemannian flow maps from the unit cotangent bundle $U^*\heis:=H^{-1}(\frac{1}{2})$ to itself. We work in the left-trivialization of the unit cotangent bundle: $U^*\heis\cong \heis \times U(1)\times\mathbb{R}\ni (z, t, e^{i\phi}, \lambda)$. The exponential map $\exp: \mathbb{R}\times U^*\heis\to \heis$ is given in these coordinates by
\begin{equation}
\exp_{(z, t)}\left(s(e^{i\phi}, \lambda)\right) =(z, t)
\begin{cases}
 \left(e^{i\phi}\frac{(e^{i\lambda s}-1)}{i\lambda}, \, \frac{\lambda s-\sin(\lambda s)}{2\lambda^2}\right) & \lambda\neq 0 \\
\left(se^{i\phi}, 0\right) & \lambda=0
\end{cases}\label{sRexp}
\end{equation}
(see \cite[Ch. 1]{montgomery2002tour}){.} {As a function of $s$, this} describes the unit-speed geodesic with initial point $(z, t)$ whose projection to the plane is a counterclockwise{-parameterized} circle of radius $R=1/|\lambda|$ {with} initial velocity in the direction of $\phi$ if $\lambda>0$, and $\phi+\pi$ if $\lambda<0$. If $\lambda=0$ {the} projection is a {straight} line in the direction $\phi$. The geodesics in \eqref{geod} are obtained by rotations and left translation of \eqref{sRexp}.

The Riemannian exponential map $\exp^\epsilon$ for $g_\epsilon$ is given in the same coordinates by
\begin{equation}
\exp^\epsilon_{(z, t)}\left(s(e^{i\phi}, \lambda)\right) =\exp_{(z, t)}\left(s(e^{i\phi}, \lambda)\right)(0, \epsilon^2\lambda s)\label{exp}
\end{equation}
(see \cite[Thm. 11.8]{montgomery2002tour} for an explanation).
Because we are using cylindrical coordinates in the fibers, neither of these exponential maps describe geodesics with initial condition strictly in the $\lambda$ direction. In the case of $g$, these geodesics are fixed points in $\heis$, and in the case of $g_\epsilon$ these geodesics are integral curves of the Reeb vector field $\epsilon^2\lambda T$. In both cases, the X-ray transforms are inverted without considering these geodesics. 
\subsection{Bessel Functions}
The classical Bessel function of order $n$ is defined by
\begin{align}
J_n(r):=\frac{1}{2\pi i^n}\int_0^{2\pi}e^{ir\cos{\theta}}e^{-in\theta}d\theta.\label{bessel}
\end{align}
\subsection{Infinitesimal Representation}\label{infinitesimal}
Define the complex vector fields on $\heis$:
\begin{align*}
Z:=\frac{1}{2}\left(X-iY\right), && \overline{Z}:=\frac{1}{2}\left(X+iY\right)
\end{align*}
where $X$ and $Y$ are given in \eqref{XY}. Then $\beta_h: \heis\to\mathcal{U}(\mathcal{H})$ {as defined in} \eqref{BFock} is the unique strongly continuous unitary {group homomorphism}  
for which, on the level of Lie algebras,
\begin{align*}
\left(\beta_h\right)_*Z=\sqrt{h}\partial_\zeta, && \left(\beta_h\right)_*\overline{Z}=-\sqrt{h}\zeta, && (\beta_h)_*T=2h. 
\end{align*}
Fix $F\in\mathcal{H}$ and $(z, t)\in\heis$. To obtain \eqref{BFock}, let 
 $G_h(\tau, \zeta)$ be unique solution to the differential equation
\begin{align*}
\frac{d}{d\tau}G_h(\tau, \zeta)=(\beta_h)_*\left(tT+zZ+\overline{z}\overline{Z}\right)G_h(\tau, \zeta)=\left(2iht+\sqrt{h}(z\partial_\zeta-\overline{z}\zeta)\right)G_h(\tau, \zeta)
\end{align*}
subject to the condition $G_h(0, \zeta)=F(\zeta)$. Then $\beta_h(z, t)F(\zeta):=G_h(1, \zeta)$. See \cite[Ch. 1 Sec 3]{folland1989harmonic} to see this worked out for the Schr\"{o}dinger representation.

\subsection{Alternate Conventions}\label{alt con}
Folland \cite{folland1989harmonic} defines the Bargmann-Fock representation on the 1-parameter family of Hilbert spaces 
\begin{align*}
\mathcal{H}^{h}:=\bigg\{F:\mathbb{C}\to\mathbb{C}, \text{ holomorphic }:
 h\int_\mathbb{C}|F(\zeta)|^2e^{-\pi h |\zeta|^2}d\zeta<\infty \bigg\}, \quad h>0,
\end{align*} 
and $\mathcal{H}^h:=\big\{F: \overline{F}\in\mathcal{H}^{|h|}\big\} $ for $h<0$. 

For $h\in\mathbb{R}^*$ and $\lambda>0$, the maps
\begin{align*}
S_\lambda:\mathcal{H}^h&\to\mathcal{H}^{\lambda h}; & S(F)(\zeta):=&F(\sqrt{\lambda}\zeta)\\
c:\mathcal{H}^h&\to\mathcal{H}^{-h};& c(F):=&\overline{F}
\end{align*}
are all isometries.

Folland defines the Fock representation, for $h>0$,  as
\begin{align*}
\beta^\text{Fol}_h(z, t)F(\zeta):=e^{2\pi hit-\pi h\zeta\overline{z}-\pi h|z|^2/2}
F(\zeta+z), \quad F\in\mathcal{H}^h
\end{align*}
and $\beta^\text{Fol}_h(z, t)=c\circ\beta^\text{Fol}_{|h|}\left(\overline{z}, -t\right)\circ c$ for $h<0$.

Our definition is rescaled so that every $\beta_{h}$ acts on the same space $\mathcal{H}=\mathcal{H}^{1/\pi}$. {Folland's} definition, $\beta^\text{Fol}_h$, is related to ours via
\begin{align*}
\beta^\text{Fol}_h(z, t)=
S_{\pi h}\circ\beta_{\pi h}\left(z, t\right)\circ S_{\pi h}^{-1}, \quad h>0. 
\end{align*}
An advantage of this convention is that as $h$ varies, $\beta_h$ varies by precomposition with automorphisms of $\heis$:
\begin{equation*}
\begin{array}{l}
\beta_h(z, t)=\beta_1(\sqrt{h}z, ht), \quad \text{for }h>0\\
\beta_h(z, t)=\beta_{|h|}(\overline{z}, -t), \quad \text{for }h<0.
\end{array}
\end{equation*}
Granted, an advantage of Folland's definition is that the Fourier transform defined with $\beta_h^\text{Fol}$ does ``converge" to the Euclidean Fourier transform as $h\to 0$. 
\section*{Acknowledgements}
The author gratefully acknowledges the support of advisors Richard Montgomery and Fran\c{c}ois Monard, who provided significant guidance and insight for the duration of this project.\\\\
This material is based upon work supported by the National Science Foundation under grants DMS-1814104, and DMS-1440140 while the author was in residence at the Mathematical Sciences Research Institute in Berkeley, California, during the Fall 2019 semester.\\\\
The author thanks the anonymous reviewer who provided many helpful comments.
\bibliographystyle{amsplain}
\bibliography{ReferencesV10}
\end{document}